\newtheorem{thm}{Theorem}[section]
\newtheorem{prop}[thm]{Proposition}
\newtheorem{lem}[thm]{Lemma}
\newtheorem{cor}[thm]{Corollary}
\theoremstyle{definition}
\newtheorem{definition}[thm]{Definition}
\theoremstyle{remark}
\newtheorem{remark}[thm]{Remark}
\numberwithin{equation}{section}
\begin{document}

\large 

\title[Galois Module Structure of Abelian Extensions]{On the Galois Module Structure of the Square Root of the Inverse Different in Abelian Extensions}

\author{Cindy (Sin Yi) Tsang}
\address{Department of Mathematics, University of California, Santa Barbara}
\email{cindytsy@math.ucsb.edu}
\urladdr{http://math.ucsb.edu/$\sim$cindytsy} 

\date{June 1, 2015}

\begin{abstract}Let $K$ be a number field with ring of integers $\mathcal{O}_K$ and $G$ a finite group of odd order. If $K_h$ is a weakly ramified $G$-Galois $K$-algebra, then its square root $A_h$ of the inverse different is a locally free $\mathcal{O}_{K}G$-module and hence determines a class in the locally free class group $\mbox{Cl}(\mathcal{O}_KG)$ of $\mathcal{O}_KG$. We show that for $G$ abelian and under suitable assumptions, the set of all such classes is a subgroup of $\mbox{Cl}(\mathcal{O}_KG)$.
\end{abstract}

\maketitle

\tableofcontents


\section{Introduction}\label{s:1}

Let $K$ be a number field with ring of integers $\mathcal{O}_K$ and $G$ a finite group. If $K_h$ is a tame $G$-Galois $K$-algebra with ring of integers $\mathcal{O}_h$, then a classical theorem of Noether implies that $\mathcal{O}_h$ is a locally free $\mathcal{O}_KG$-module and hence determines a class $\mbox{cl}(\mathcal{O}_h)$ in the locally free class group $\mbox{Cl}(\mathcal{O}_KG)$ of $\mathcal{O}_KG$. A class $c\in\mbox{Cl}(\mathcal{O}_KG)$ is called \emph{realizable} if $c=\mbox{cl}(\mathcal{O}_h)$ for some tame $G$-Galois $K$-algebra $K_h$. The set of all realizable classes is denoted by $R(\mathcal{O}_KG)$.

The set of isomorphism classes of $G$-Galois $K$-algebras may be parametrized by the pointed set $H^1(\Omega_K,G)$, where $\Omega_K$ denotes the absolute Galois group of $K$ whose action on $G$ is trivial. Let $H^1_t(\Omega_K,G)$ be the subset consisting of the tame $G$-Galois $K$-algebras and consider the map
\[
\mbox{gal}:H_t^1(\Omega_K,G)\longrightarrow\mbox{Cl}(\mathcal{O}_KG);\hspace{1em}\mbox{gal}(h)=\mbox{cl}(\mathcal{O}_h).
\]
Assume for the moment that $G$ is abelian, in which case $H^1_t(\Omega_K,G)$ is a group. Brinkhuis proved in \cite[Proposition 3.10]{B} that $\mbox{gal}$ is \emph{weakly multiplicative}, i.e. for all $h_1,h_2\in H_t^1(\Omega_K,G)$, we have
\[
\mbox{gal}(h_1h_2)=\mbox{gal}_1(h_1)\mbox{gal}_1(h_2)\hspace{1cm}\mbox{whenever }d(h_1)\cap d(h_2)=\emptyset,
\]
where $d(h)$ denotes the set of primes in $K$ which are ramified in $K_h/K$ given any $h\in H^1(\Omega_K,G)$. In addition, McCulloh proved in \cite[Corollary 6.20]{M} that the set $R(\mathcal{O}_KG)$ is in fact a subgroup of $\mbox{Cl}(\mathcal{O}_KG)$; this is not obvious because $\mbox{gal}$ is not a homomorphism in general.

In this paper, we study the Galois module structure of the square root of the inverse different instead of that of the ring of integers. First, recall that:

\begin{prop}\label{prop:1.1}Let $F$ be a finite extension of $\mathbb{Q}_p$ and $L/F$ a finite Galois extension. Let $\mathfrak{D}_{L/F}$ denote the different ideal of $L/F$ and $\mathfrak{P}$ the prime ideal in $\mathcal{O}_L$. Then, the highest power $v_L(\mathfrak{D}_{L/F})$ of $\mathfrak{P}$ dividing $\mathfrak{D}_{L/F}$ is given by
\[
v_L(\mathfrak{D}_{L/F})=\sum_{n=0}^{\infty}(|G_n|-1),
\]
where $G_n$ is the $n$-th ramification group of $L/F$.
\end{prop}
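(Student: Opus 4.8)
The plan is to run the classical argument from the theory of local fields: express the different via a monogenic generator and then reorganize a double sum so that the exponent of $\mathfrak{P}$ is counted by the ramification filtration. First I would reduce to the totally ramified case. Let $L_{0}$ be the maximal unramified subextension of $L/F$; then $L_{0}/F$ is unramified and $L/L_{0}$ is totally ramified with Galois group the inertia subgroup $G_{0}$ of $G=\mathrm{Gal}(L/F)$, and moreover the $n$-th ramification group of $L/L_{0}$ coincides with $G_{n}$ for every $n\ge 0$. By the tower formula $\mathfrak{D}_{L/F}=\mathfrak{D}_{L/L_{0}}\cdot(\mathfrak{D}_{L_{0}/F}\mathcal{O}_{L})$ together with $\mathfrak{D}_{L_{0}/F}=\mathcal{O}_{L_{0}}$ (as $L_{0}/F$ is unramified), one gets $\mathfrak{D}_{L/F}=\mathfrak{D}_{L/L_{0}}$, and since the valuation $v_{L}$ is unchanged it suffices to prove the formula assuming $L/F$ is totally ramified.

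So assume $L/F$ totally ramified. The residue field of $F$ is finite, hence perfect, so $\mathcal{O}_{L}=\mathcal{O}_{F}[\pi]$ for a uniformizer $\pi$ of $L$; let $g(X)\in\mathcal{O}_{F}[X]$ be its minimal polynomial. Then $\mathfrak{D}_{L/F}=\bigl(g'(\pi)\bigr)$. Writing $g(X)=\prod_{\sigma\in G}(X-\sigma\pi)$ and differentiating gives $g'(\pi)=\prod_{\sigma\ne 1}(\pi-\sigma\pi)$, whence
\[
v_{L}(\mathfrak{D}_{L/F})=\sum_{\sigma\ne 1}v_{L}(\sigma\pi-\pi)=\sum_{\sigma\ne 1}i_{G}(\sigma),
\]
where $i_{G}(\sigma):=v_{L}(\sigma\pi-\pi)$, which is a nonnegative integer for $\sigma\ne 1$. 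Now I would invoke the defining property of the ramification groups, namely $\sigma\in G_{n}\iff i_{G}(\sigma)\ge n+1$, which says exactly that $i_{G}(\sigma)=\#\{\,n\ge 0:\sigma\in G_{n}\,\}$. Substituting this and interchanging the order of summation in the (finite) double sum yields
\[
\sum_{\sigma\ne 1}i_{G}(\sigma)=\sum_{\sigma\ne 1}\ \sum_{n\ge 0}\mathbf{1}_{[\sigma\in G_{n}]}=\sum_{n\ge 0}\#\{\,\sigma\ne 1:\sigma\in G_{n}\,\}=\sum_{n=0}^{\infty}\bigl(|G_{n}|-1\bigr),
\]
the series being finite because $G_{n}=1$ for all sufficiently large $n$. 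Combining the two displays gives the claim.

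The argument is essentially bookkeeping, so there is no deep obstacle; the only genuine inputs are the existence of a monogenic integral generator $\mathcal{O}_{L}=\mathcal{O}_{F}[\pi]$ (which rests on perfectness of the residue field) and the formula $\mathfrak{D}_{L/F}=(g'(\pi))$ for such a generator. The point that requires the most care is simply keeping the indexing conventions consistent: matching the definition $i_{G}(\sigma)=v_{L}(\sigma\pi-\pi)$ with the lower-numbering convention $\sigma\in G_{n}\iff i_{G}(\sigma)\ge n+1$, and checking that the reduction to the totally ramified case leaves the groups $G_{n}$ ($n\ge 0$) and the valuation $v_{L}$ untouched.
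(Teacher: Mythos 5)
Your argument is correct; the paper does not prove this proposition but simply cites Serre's \emph{Local Fields} (Chapter IV, Proposition 4), and your proof is precisely that standard argument — reduce to the totally ramified case via the tower formula, write $\mathfrak{D}_{L/F}=(g'(\pi))$ for a monogenic generator, and reindex $\sum_{\sigma\neq 1}i_G(\sigma)$ using $\sigma\in G_n\iff i_G(\sigma)\geq n+1$. The only cosmetic slip is the justification for $\mathcal{O}_L=\mathcal{O}_F[\pi]$: in the totally ramified case this holds because the residue extension is trivial and $\pi$ satisfies an Eisenstein polynomial (perfectness of the residue field is what gives monogenicity by \emph{some} element in general), but the conclusion you use is correct.
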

\begin{proof}
See \cite[Chapter IV Proposition 4]{S}, for example.
\end{proof}

Assume henceforth that $G$ has odd order. If $K_h$ is a $G$-Galois $K$-algebra, then Proposition~\ref{prop:1.1} implies that there exists an ideal $A_h$ in $K_h$ whose square is the inverse different of $K_h$. If $K_h$ is in addition weakly ramified, i.e. all of its second ramification groups are trivial, then it follows from \cite[Theorem 1 in Section 2]{E} that $A_h$ is a locally free $\mathcal{O}_KG$-module and thus defines a class $\mbox{cl}(A_h)$ in $\mbox{Cl}(\mathcal{O}_KG)$. A class $c\in\mbox{Cl}(\mathcal{O}_KG)$ is called \emph{$A$-realizable} if $c=\mbox{cl}(A_h)$ for some weakly ramified $G$-Galois $K$-algebra $K_h$, and \emph{tame $A$-realizable} if the above $K_h$ may be chosen to be tame. The sets of all $A$-realizable classes and tame $A$-realizable classes are denoted by $\mathcal{A}(\mathcal{O}_KG)$ and $\mathcal{A}^t(\mathcal{O}_KG)$, respectively. We make the remark that $\mathcal{A}^{t}(\mathbb{Z}G)=1$ as a consequence of \cite[Theorem 3]{E}, and that $\mathcal{A}(\mathbb{Z}G)=1$ when $G$ is abelian from \cite[Theorem 1.2]{V}.

As in the case of the ring of integers, we consider the map
\[
\mbox{gal}_A:H^1_w(\Omega_K,G)\longrightarrow\mbox{Cl}(\mathcal{O}_KG);\hspace{1em}\mbox{gal}_A(h)=\mbox{cl}(A_h),
\]
where $H^1_w(\Omega_K,G)$ is the subset of $H^1(\Omega_K,G)$ consisting of the weakly ramified $G$-Galois $K$-algebras. In Section~\ref{s:5}, assuming that $G$ is abelian, we show that the map $\mbox{gal}_A$ preserves inverses and that it is weakly multiplicative as is $\mbox{gal}$. More precisely, we prove that:

\begin{thm}\label{thm:1.2}Let $K$ be a number field and $G$ an abelian group of odd order. 
\begin{enumerate}[(a)]
\item If $h\in H^1_w(\Omega_K,G)$, then $h^{-1}\in H^1_w(\Omega_K,G)$ and
\[
\mbox{gal}_A(h^{-1})=\mbox{gal}_A(h)^{-1}.
\]
\item If $h_1,h_2\in H^1_w(\Omega_K,G)$ and $d(h_1)\cap d(h_2)=\emptyset$, then $h_1h_2\in H^1_w(\Omega_K,G)$ and 
\[
\mbox{gal}_A(h_1h_2)=\mbox{gal}_A(h_1)\mbox{gal}_A(h_2).
\]
\end{enumerate}
\end{thm}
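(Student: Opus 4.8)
The plan is to reduce both parts, using Fr\"ohlich's Hom-description of $\mbox{Cl}(\mathcal{O}_KG)$ and the resolvents attached to $A_h$ (as set up in the preceding sections), to local computations together with one module-theoretic input. Throughout, for $h\in H^1(\Omega_K,G)=\mbox{Hom}(\Omega_K,G)$ we have $h^{-1}=\iota\circ h$ with $\iota\colon g\mapsto g^{-1}$, and $K_{h^{-1}}$ is the \emph{same} $K$-algebra as $K_h$, only the $G$-action has been composed with $\iota$; in particular the two have identical ramification, so $h^{-1}\in H^1_w(\Omega_K,G)$, and for the resolvent of an element $c$ one has $(c\mid\chi)_{h^{-1}}=(c\mid\chi^{-1})_h$ for each character $\chi$ of $G$. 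For (b), when $d(h_1)\cap d(h_2)=\emptyset$, at every prime $\mathfrak{p}$ of $K$ at least one of $h_{1,\mathfrak{p}},h_{2,\mathfrak{p}}$ is unramified, hence trivial on the inertia subgroup of $\Omega_{K_\mathfrak{p}}$; so the ramification groups of $K_{h_1h_2,\mathfrak{p}}/K_\mathfrak{p}$ (images under $h_{1,\mathfrak{p}}h_{2,\mathfrak{p}}$ of those of $\Omega_{K_\mathfrak{p}}$, in the upper numbering) coincide with those of the ramified factor, and hence so does the lower-numbering filtration; thus $K_{h_1h_2}$ is again weakly ramified. This settles the membership claims and reduces each of (a), (b) to the displayed equality of classes.

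For (a), the resolvent identity $(c\mid\chi)_{h^{-1}}=(c\mid\chi^{-1})_h$ shows that the idele representing $\mbox{cl}(A_{h^{-1}})$ is obtained from the one representing $\mbox{cl}(A_h)$ by precomposing the character variable with $\chi\mapsto\chi^{-1}$; this is exactly the effect on $\mbox{Cl}(\mathcal{O}_KG)$ of the automorphism $\iota_*$ induced by the antipode $\iota\colon\mathcal{O}_KG\to\mathcal{O}_KG$, so $\mbox{cl}(A_{h^{-1}})=\iota_*\mbox{cl}(A_h)$. It remains to see that $\iota_*$ inverts the particular class $\mbox{cl}(A_h)$, and here I would use what is special about the square root of the inverse different: the trace form of $K_h$ is nondegenerate and $G$-invariant, and since $A_h^2=\mathfrak{D}_{K_h/K}^{-1}$ one gets $A_h^{\vee}=\mathfrak{D}_{K_h/K}^{-1}A_h^{-1}=A_h$, i.e.\ $A_h$ is self-dual for the trace form. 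Thus $x\mapsto\mbox{Tr}_{K_h/K}(x\,\cdot\,)$ is an isomorphism $A_h\xrightarrow{\sim}\mbox{Hom}_{\mathcal{O}_K}(A_h,\mathcal{O}_K)$, and $G$-invariance of the trace makes it $\mathcal{O}_KG$-linear for the contragredient $G$-action on the dual. Because $\mathcal{O}_KG$ is a symmetric $\mathcal{O}_K$-algebra (via $\sum a_gg\mapsto a_1$), this contragredient dual is identified, as $\mathcal{O}_KG$-module, with the $\iota$-twist of $\mbox{Hom}_{\mathcal{O}_KG}(A_h,\mathcal{O}_KG)$, whose class is $\iota_*\bigl(\mbox{cl}(A_h)^{-1}\bigr)$. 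Comparing, $\mbox{cl}(A_h)=\iota_*\bigl(\mbox{cl}(A_h)^{-1}\bigr)$, so $\iota_*\mbox{cl}(A_h)=\mbox{cl}(A_h)^{-1}$ and therefore $\mbox{cl}(A_{h^{-1}})=\mbox{cl}(A_h)^{-1}$.

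For (b), I would adapt Brinkhuis's argument, which rests on the multiplicativity of resolvents: if $b_i$ generates $K_{h_i}$ over $KG$, then the image $b_1b_2$ of $b_1\otimes b_2$ in $K_{h_1h_2}$ satisfies $(b_1b_2\mid\chi)_{h_1h_2}=(b_1\mid\chi)_{h_1}(b_2\mid\chi)_{h_2}$, because the right-hand side transforms under $\Omega_K$ exactly as a resolvent for $h_1h_2$. Choose local generators compatibly: at $\mathfrak{p}\notin d(h_i)$ let $c_{i,\mathfrak{p}}$ be a normal integral basis generator of $A_{h_i,\mathfrak{p}}=\mathcal{O}_{h_i,\mathfrak{p}}$ (Noether), and at $\mathfrak{p}\in d(h_i)$ a generator of $A_{h_i,\mathfrak{p}}$ over $\mathcal{O}_{K_\mathfrak{p}}G$. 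Since $d(h_1)\cap d(h_2)=\emptyset$, at each $\mathfrak{p}$ one of the two factors is a local normal integral basis generator, and one checks that $c_{1,\mathfrak{p}}c_{2,\mathfrak{p}}$ then generates $A_{h_1h_2,\mathfrak{p}}$ over $\mathcal{O}_{K_\mathfrak{p}}G$ — at $\mathfrak{p}\in d(h_1)$ using $\mathfrak{D}_{K_{h_1h_2}/K,\mathfrak{p}}=\mathfrak{D}_{K_{h_1}/K,\mathfrak{p}}\mathcal{O}_{h_1h_2,\mathfrak{p}}$, and at unramified $\mathfrak{p}$ using Brinkhuis's lemma for the ring of integers. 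Feeding $b_1b_2$ and the $c_{1,\mathfrak{p}}c_{2,\mathfrak{p}}$ into the Hom-description and using the resolvent identity, the homomorphism representing $\mbox{cl}(A_{h_1h_2})$ becomes literally the product of those representing $\mbox{cl}(A_{h_1})$ and $\mbox{cl}(A_{h_2})$, whence $\mbox{cl}(A_{h_1h_2})=\mbox{cl}(A_{h_1})\mbox{cl}(A_{h_2})$.

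The step I expect to cost the most is the local analysis at the primes of partial ramification in (b): verifying that $c_{1,\mathfrak{p}}c_{2,\mathfrak{p}}$ really generates $A_{h_1h_2,\mathfrak{p}}$ over $\mathcal{O}_{K_\mathfrak{p}}G$ — i.e.\ that twisting a weakly (possibly wildly) ramified local Galois algebra by an unramified character leaves the $\mathcal{O}_{K_\mathfrak{p}}G$-module structure of the square root of the inverse different, and its resolvent, essentially unchanged — and reconciling this with the resolvent identity under passage to completions. By comparison the membership statements are routine, and in (a) the only delicate points are the $\mathcal{O}_KG$-linearity (not merely $\mathcal{O}_K$-linearity) of the trace-form duality and the variance in the symmetric-algebra identification; once those are in place the inversion formula is immediate, and recall that it is the hypothesis $|G|$ odd that makes $A_h$ available in the first place (Proposition~\ref{prop:1.1}).
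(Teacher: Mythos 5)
Your proposal is correct in substance and, for part (b), follows the same route as the paper: reduce to completions, use multiplicativity of resolvents, and check that a product of local generators is again a local generator, with the unramified factor leaving the square root of the inverse different undisturbed (this is the paper's Lemma~\ref{lem:4.3} and Proposition~\ref{prop:4.4}). The one step you flag as costly is indeed where the paper does real work: Proposition~\ref{prop:4.4} proves that $c_{1,\mathfrak{p}}c_{2,\mathfrak{p}}$ generates $A_{h_1h_2,\mathfrak{p}}$ by writing $a(s)=\sum_{rt=s}a_1(r)a_2(t)$, estimating valuations against $v_{F^{h_2}}(A^{h_2})=v_{F^h}(A^h)$ to get containment in $A_{h_1h_2,\mathfrak{p}}$, and then invoking the self-duality criterion $\textbf{r}_G(a)\textbf{r}_G(a)^{[-1]}\in(\mathcal{O}_FG)^\times$ (Corollary~\ref{cor:2.9}) to upgrade containment to equality; your appeal to the different formula alone gives the right modules but not yet the generator statement, so that criterion (or an equivalent discriminant computation) still has to be supplied. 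For part (a) you take a genuinely different, more module-theoretic route: you identify $A_{h^{-1}}$ with the $\iota$-twist of $A_h$, so $\mbox{cl}(A_{h^{-1}})=\iota_*\mbox{cl}(A_h)$, and then use trace-form self-duality of $A_h$ together with the symmetric-algebra identification of the linear dual to get $\iota_*\mbox{cl}(A_h)=\mbox{cl}(A_h)^{-1}$. The paper instead works locally with resolvends (Proposition~\ref{prop:4.1}): if $\textbf{r}_G(a')=\textbf{r}_G(a)^{-1}$ then Corollary~\ref{cor:2.9} forces $\textbf{r}_G(a')=\gamma\,\textbf{r}_G(a)^{[-1]}$ with $\gamma\in(\mathcal{O}_FG)^\times$, so $a'$ generates $A_{h^{-1}}$ and the representing id\`ele simply inverts. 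The two arguments rest on the same fact (self-duality of $A_h$, which is exactly what fails for $\mathcal{O}_h$ and makes $\mbox{gal}_A$, unlike $\mbox{gal}$, preserve inverses); yours is cleaner at the level of classes, while the paper's resolvend version has the advantage of producing an explicit local generator of $A_{h^{-1}}$, which is what gets reused in Sections~\ref{s:8}--\ref{s:17}.
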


Moreover, following the techniques developed by McCulloh in \cite{M}, analogous to \cite[Theorem 6.17 and Corollary 6.20]{M}, in Section~\ref{s:10} we prove that:

\begin{thm}\label{thm:1.3}Let $K$ be a number field and $G$ an abelian group of odd order.
\begin{enumerate}[(a)]
\item The set $\mathcal{A}^{t}(\mathcal{O}_KG)$ is a subgroup of $\mbox{Cl}(\mathcal{O}_KG)$.
\item Given $c\in \mathcal{A}^t(\mathcal{O}_KG)$ and a finite set $T$ of primes in $\mathcal{O}_K$, there is a tame $G$-Galois $K$-algebra $K_h$ such that $K_h$ is a field, every $v\in T$ is unramified in $K_h/K$, and $c=\mbox{cl}(A_h)$.
\end{enumerate}
\end{thm}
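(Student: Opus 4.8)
The plan is to adapt to the square root of the inverse different the strategy McCulloh used in \cite{M} for the ring of integers. The starting point is Fr\"ohlich's Hom-description of $\mbox{Cl}(\mathcal{O}_KG)$ together with the resolvend formalism: given $h\in H^1_w(\Omega_K,G)$, one chooses for each finite prime $v$ of $K$ a local $\mathcal{O}_{K,v}G$-generator of $A_h$, forms its resolvend $\mathbf{r}_G(\cdot)=\sum_{s\in G}s(\cdot)s^{-1}$, and recovers $\mbox{cl}(A_h)$ from the $\det$-values of these resolvends on the characters of $G$. The first task is thus to compute these local resolvends. At a prime unramified in $K_h/K$ one takes a normal integral basis generator and the computation is exactly McCulloh's. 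At a tamely ramified prime, oddness of $|G|$ forces the ramification index $e$ to be odd, so the local different exponent $e-1$ is even and $A_h$ is locally the ideal $\mathfrak{P}^{-(e-1)/2}$; one then checks that a local generator of $A_h$ differs from a normal integral basis generator of $\mathcal{O}_h$ by an explicit element whose resolvend carries a ``half'' of the tame Galois Gauss sum attached to the local inertia character, the extraction of this half inside the character lattice of $G$ being possible precisely because $|G|$ is odd.

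Granting these local formulas, the second step is to assemble the global class as in \cite[\S 6]{M}. Fix a finite set $S$ of primes of $K$ containing the archimedean places, the primes dividing $|G|$, and enough further primes to kill the usual class-group and unit obstructions. For tame $h$ with $d(h)\cap S=\emptyset$ one proves a \emph{Stickelberger factorization}: $\mbox{cl}(A_h)=\Psi_A(\mathfrak{f}_h)$, where $\mathfrak{f}_h$ is the conductor idele of $h$ and $\Psi_A$ is the analogue for the square root of McCulloh's Stickelberger map on the group $\mathfrak{I}^S$ of $G$-ideals of $K$ prime to $S$; the difference between $\Psi_A$ and McCulloh's $\Theta$ is exactly the ``factor $\tfrac12$'' recorded in Step~1, which makes sense because $|G|$ is odd. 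Since $\Psi_A$ is a homomorphism, and since a prime-decomposition argument (realizing prescribed prime factors by suitable cocycles, via Chebotarev in an auxiliary ray class field) shows that $\Psi_A$ of the set of conductors $\{\mathfrak{f}_h\}$ equals $\Psi_A(\mathfrak{H}^S)$ for a subgroup $\mathfrak{H}^S\le\mathfrak{I}^S$, it follows that the classes realized by tame $h$ with $d(h)\cap S=\emptyset$ form the subgroup $\Psi_A(\mathfrak{H}^S)$ of $\mbox{Cl}(\mathcal{O}_KG)$. That this subgroup is independent of $S$ and equals the full set $\mathcal{A}^t(\mathcal{O}_KG)$ follows from an approximation argument in the spirit of \cite[\S 6]{M}: using the prime-decomposition freedom together with Theorem~\ref{thm:1.2}(a),(b), any tame $A$-realizable class can be re-realized by a tame $G$-Galois $K$-algebra ramified only outside $S$, for any prescribed finite $S$. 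This proves part (a).

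For part (b), take $S\supseteq T$ at the outset, write a given $c\in\mathcal{A}^t(\mathcal{O}_KG)$ as $c=\Psi_A(\mathfrak{a})$ with $\mathfrak{a}\in\mathfrak{H}^S$, and realize $\mathfrak{a}$ by a tame $h$ with ramification disjoint from $S$. In the prime-decomposition argument each prime factor of $\mathfrak{a}$ can be chosen among infinitely many primes carrying a prescribed inertia character; choosing them so that the decomposition groups at the ramified primes jointly generate $G$ makes the cocycle $h$ surjective, hence makes $K_h$ a field. Since every ramified prime lies outside $S\supseteq T$, every $v\in T$ is unramified in $K_h/K$. This is the analogue of \cite[Theorem~6.17]{M}.

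The main obstacle is Step~1 together with the part of Step~2 that promotes the ``factor $\tfrac12$'' to a genuine homomorphism: one must show that at each tamely ramified prime the local resolvend of a square-root generator differs from the ring-of-integers one by the value of a fixed element of the character lattice of $G$ --- a precise ``half'' of the local Stickelberger element --- and that the Galois Gauss sum product formula underlying McCulloh's argument descends to this half-version, so that no spurious global obstruction appears. Oddness of $|G|$ is used twice and must be tracked carefully: it is what makes the local different exponents even, hence the relevant Stickelberger-type element divisible by $2$ in the character lattice, and it is what lets one carry out that division consistently over all primes. I expect the wildly but weakly ramified primes not to enter the tame statement of Theorem~\ref{thm:1.3}, though a parallel, more delicate local computation at such primes would be needed before one could attempt an analogous subgroup statement for the full set $\mathcal{A}(\mathcal{O}_KG)$.
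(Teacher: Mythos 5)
Your proposal is essentially the paper's approach: it is McCulloh's method, with the local tame resolvend of a generator of $A_h$ computed to be (a unit times) $\pi^{\langle\chi,s\rangle_*}$ for a ``symmetrized'' Stickelberger pairing whose integrality on $A_{\hat G}$ is exactly the half-shift permitted by $|G|$ odd, followed by the approximation theorem to replace arbitrary id\`eles of $\Lambda(KG)$ by prime $\mathfrak{F}$-elements, which exhibits $j^{-1}(\mathcal{A}^t(\mathcal{O}_KG))$ as the kernel of a homomorphism and, for part (b), lets one prescribe $f_v=1$ on $T$ and $f_s\neq 1$ for all $s\neq 1$ to force $h$ unramified on $T$ and surjective. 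The only cosmetic differences are that the paper works id\`elically with the modified Stickelberger transpose $\Theta_*^t$ rather than with conductor ideals, and that no Galois Gauss sums are needed --- the tame local resolvent is computed directly from an explicit generator of $A^h$.
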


In Section~\ref{s:14}, we show $A_h$ does not give rise to a new $A$-realizable class in addition to the tame ones for wildly and weakly ramified $G$-Galois $K$-algebras $K_h$ satisfying suitable hypotheses. In particular, we prove that:

\begin{thm}\label{thm:1.4}Let $K$ be a number field and $G$ an abelian group of odd order. Let $K_h$ be a wildly and weakly ramified $G$-Galois $K$-algebra and $V$ the set of primes in $\mathcal{O}_K$ which are wildly ramified in $K_h/K$. If conditions (i) and (ii) below are satisfied by every $v\in V$, then $\mbox{cl}(A_h)\in\mathcal{A}^t(\mathcal{O}_KG)$. 
\begin{enumerate}[(i)]
\item The ramification index of $v$ over $\mathbb{Q}$ is one;
\item The ramification index of $v$ in $K_h/K$ is prime.
\end{enumerate}
\end{thm}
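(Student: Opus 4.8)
We outline the strategy, which combines the idelic ``resolvend'' description of the classes $\mbox{cl}(A_h)$ that underlies Theorem~\ref{thm:1.3} with an explicit local analysis at the wildly ramified primes. Since $G$ is abelian we regard $h$ as a continuous homomorphism $\Omega_K\to G$, with local component $h_v\colon\Omega_{K_v}\to G$ at each prime $v$. For $v\in V$ with residue characteristic $p$, hypothesis (ii) forces the local ramification index to be exactly $p$, so the inertia group of $h_v$ is cyclic of order $p$; as $K_h$ is weakly ramified this is also the first ramification group and all higher ones vanish, whence by Proposition~\ref{prop:1.1} the different exponent at $v$ is $2(p-1)$ and the local component of $A_h$ at $v$ is $\mathfrak{P}^{-(p-1)}$, where $\mathfrak{P}$ is the prime of $K_h$ above $v$. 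Hypothesis (i) says that $K_v$ is unramified over $\mathbb{Q}_p$.

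The first step is to record the class-group dictionary. As in \cite{M} and in the proof of Theorem~\ref{thm:1.3}, $\mbox{cl}(A_h)$ is obtained by applying a fixed homomorphism, defined on a quotient of a restricted direct product of ``reduced resolvend'' groups and surjecting onto $\mbox{Cl}(\mathcal{O}_KG)$, to an idele $r_h=(r_{h,v})_v$: at a prime $v$ that is unramified or tamely ramified in $K_h/K$ the component $r_{h,v}$ is given by the standard formula for the square root of the inverse different in the tame case, in terms of the ramification character of $h_v$, while at $v\in V$ it is the reduced resolvend of a generator of $A_h$ over $\mathcal{O}_{K_v}G_v$. One then has $\mbox{cl}(A_h)\in\mathcal{A}^t(\mathcal{O}_KG)$ as soon as $r_h$ is congruent, modulo the unit and principal subgroups appearing in that quotient, to the reduced resolvend idele of some \emph{tame} $G$-Galois $K$-algebra.

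The heart of the argument is therefore the local computation at each $v\in V$. Under hypotheses (i) and (ii) the local extension is a weakly ramified $C_p$-extension of an absolutely unramified local field, and for such extensions the structure of the square root of the inverse different is completely understood (this is the local content behind \cite{E}): one can exhibit an explicit normal integral basis generator of $A_h$ over $\mathcal{O}_{K_v}G_v$ and compute its reduced resolvend, obtaining a Gauss-sum-type expression. The claim to be verified is that the resulting coset, modulo unit resolvends, coincides with one produced by tame ramification; concretely, using a Chebotarev / Grunwald--Wang argument one selects auxiliary primes $w$ of $K$, coprime to the residue characteristic and to $d(h)$, with residue degrees chosen so that the tame ramification characters available at the $w$ generate the appropriate cyclic subgroup of $\widehat{G}$, and one checks that a suitable product of the corresponding tame resolvend factors matches $r_{h,v}$ up to units. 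Assembling the $w$'s over all $v\in V$ and invoking the surjectivity (approximation) results from McCulloh's machinery that are already used for Theorem~\ref{thm:1.3}, one produces a tame $G$-Galois $K$-algebra $K_{h'}$ whose reduced resolvend idele is congruent to $r_h$; then $\mbox{cl}(A_h)=\mbox{cl}(A_{h'})\in\mathcal{A}^t(\mathcal{O}_KG)$. Weak multiplicativity of $\mbox{gal}_A$ (Theorem~\ref{thm:1.2}(b)) can be used to organize this bookkeeping, splitting off the contribution of the primes in $V$ from that of the remaining ramified primes, which are left untouched.

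The main obstacle is precisely the explicit local computation just described: writing down a normal integral basis generator for the square root of the inverse different of a weakly ramified cyclic extension of prime degree over an absolutely unramified local field, evaluating its resolvend, and identifying the answer with a tame contribution. This is exactly where (i) and (ii) are indispensable: (i) makes the base absolutely unramified, so that the relevant generator and its Gauss-sum resolvend are available and manageable, and (ii) keeps the inertia cyclic of prime order, so that the comparison with the one-dimensional tame resolvend formula is clean. A secondary technical point is ensuring that the auxiliary primes $w$ can be chosen simultaneously without conflicting with $d(h)$ or with one another, and that the resulting $h'$ lies in $H^1_w(\Omega_K,G)$; this is routine given Theorem~\ref{thm:1.3}(b) and the Chebotarev density theorem, but must be arranged with care.
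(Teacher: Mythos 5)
Your overall strategy is the same as the paper's: express $\mbox{cl}(A_h)$ idelically via reduced resolvends of local generators, handle the tame primes by the standard decomposition, treat the wild primes by an explicit local computation made possible by (i) and (ii), and then pass to an honestly tame algebra by McCulloh's approximation machinery. Your preliminary observations are also correct (under (i) and (ii) the inertia group at $v\in V$ is cyclic of order $p$, equal to the first ramification group, the different exponent is $2(p-1)$, and $A_h$ is locally $\mathfrak{P}^{-(p-1)}$). One small clarification on the logic: the local step at $v\in V$ does not need to match $r_G(a_v)$ with the resolvend of a tame extension; it suffices to show $r_G(a_v)\in\mathcal{H}(\mathcal{O}_{K_v}G)\,\Theta^t_*(\Lambda(K_vG)^\times)$, because membership of $rag(c)$ in $\eta(\mathcal{H}(KG))U(\mathcal{H}(\mathcal{O}_KG))\Theta^t_*(J(\Lambda(KG)))$ already characterizes $\mathcal{A}^t(\mathcal{O}_KG)$ (this is (\ref{eq:10.1}), whose proof absorbs the Chebotarev-type selection of auxiliary primes into Theorem~\ref{thm:9.3}). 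So the ``auxiliary primes $w$'' bookkeeping you describe is already packaged in the proof of Theorem~\ref{thm:1.3}(a) and need not be redone.

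The genuine gap is that the step you yourself flag as ``the main obstacle'' is the entire content of the theorem, and your proposal does not carry it out. Concretely, what is needed is: a factorization $h_v=h^{nr}h^{tot}$ with $F^{h^{tot}}$ contained in the second Lubin--Tate division field $F_{p,2}$ (Proposition~\ref{prop:11.3}); Pickett's description $L(\zeta)=F(\zeta,x^{1/p})$ with $v_{L(\zeta)}(x^{1/p}-1)=1$ for the totally ramified degree-$p$ piece $L=F^{h^{tot}}$ (Lemma~\ref{lem:12.3}, and this is exactly where absolute unramifiedness of $F$ and oddness of $p$ enter); the explicit candidate generator
\[
\alpha=\frac{1}{p}\sum_{k\in\mathbb{F}_p}\prod_{i\in\mathbb{F}_p^\times}y_i^{c(ik)},\qquad y_i=\omega_i(x^{1/p}),
\]
together with the valuation estimate showing $\alpha\in A_{L/F}$ and the character-by-character computation $\textbf{r}_G(a)(\chi)=\prod_i y_i^{c(ik)}=\Theta^t_*(g)(\chi)$ for an explicit $g\in\Lambda(FG)^\times$ supported on the inertia subgroup (Proposition~\ref{prop:12.2}); and finally the self-duality criterion (Corollary~\ref{cor:2.9} via Proposition~\ref{prop:7.5}) to conclude $A_{h^{tot}}=\mathcal{O}_FG\cdot a$, followed by recombination with the unramified part through Proposition~\ref{prop:4.4} (Theorem~\ref{thm:13.1}). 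Hypothesis (ii) is needed precisely so that $h^{tot}$ is itself cyclic of degree $p$ and Proposition~\ref{prop:4.4} applies to reassemble a generator of $A_{h_v}$; without it one only gets the weaker conclusion of Theorem~\ref{thm:16.1}, which is why Theorem~\ref{thm:1.5} requires extension of scalars to the maximal order. None of this construction or verification appears in your proposal, so as written it is a correct plan rather than a proof.
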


We would like to remove hypothesis (ii) in Theorem~\ref{thm:1.3}. Currently, we can only do so if we extend scalars to the maximal $\mathcal{O}_K$-order $\mathcal{M}(KG)$ in $KG$. To be precise, let $\mbox{Cl}(\mathcal{M}(KG))$ be the locally free class group of $\mathcal{M}(KG)$ and
\[
\Psi:\mbox{Cl}(\mathcal{O}_KG)\longrightarrow\mbox{Cl}(\mathcal{M}(KG))
\]
the canonical homomorphism afforded by extension of scalars. In Section~\ref{s:17}, we prove that:

\begin{thm}\label{thm:1.5}Let $K$ be a number field and $G$ an abelian group of odd order. Let $K_h$ be a wildly and weakly ramified $G$-Galois $K$-algebra and $V$ the set of primes in $\mathcal{O}_K$ which are wildly ramified in $K_h/K$. If the ramification index of every $v\in V$ over $\mathbb{Q}$ is one, then $\Psi(\mbox{cl}(A_h))\in\Psi(\mathcal{A}^t(\mathcal{O}_KG))$.
\end{thm}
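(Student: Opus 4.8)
The plan is to reformulate the statement as $\mbox{cl}(A_h)\in\mathcal{A}^t(\mathcal{O}_KG)\cdot\ker\Psi$ and to establish it by comparing idèlic resolvents. Since $G$ is abelian, $\mathcal{M}(KG)=\prod_{[\chi]}\mathcal{O}_{K(\chi)}$, where $[\chi]$ runs over the $\Omega_K$-orbits of characters of $G$ and $K(\chi)$ is the field obtained by adjoining the values of $\chi$ to $K$; hence $\mbox{Cl}(\mathcal{M}(KG))\cong\prod_{[\chi]}\mbox{Cl}(\mathcal{O}_{K(\chi)})$, and $\Psi(\mbox{cl}(A_h))$ is recovered, component by component, from the idèlic resolvent of $A_h$. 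The decisive feature of the maximal order is that $\mathrm{Det}\,U(\mathcal{M}(KG))$ contains all the local unit groups, so that $\Psi(\mbox{cl}(A_h))$ depends only on the fractional ideals (the ``contents'') cut out by these resolvents and not on their precise values; dually, via the McCulloh-type description underlying Theorem~\ref{thm:1.3}, $\Psi(\mathcal{A}^t(\mathcal{O}_KG))$ is exactly the subgroup of $\mbox{Cl}(\mathcal{M}(KG))$ made up of the classes of contents realized by tamely ramified $G$-Galois $K$-algebras. It therefore suffices to show that the content of the idèlic resolvent of $A_h$ agrees, at every prime of $\mathcal{O}_K$, with one realized by a tame $A$-realizable class.

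The resolvent idèle of $A_h$ factors as a product of local resolvents, so the analysis is purely local. At a prime $v$ which is tame (or unramified) in $K_h/K$ the local extension is tamely ramified and its resolvent content is by definition of the type realized by tame algebras, so all the work is at the wild primes $v\in V$, which by hypothesis are unramified over $\mathbb{Q}$. Fix such a $v$, lying over $p$, and let $E/K_v$ be the completion of $K_h/K$ above $v$, an abelian weakly ramified extension with group $\Gamma$. Since the second ramification group of $E/K_v$ vanishes, $\Gamma_1$ injects into the additive group of the residue field, so $\Gamma_1\cong(\mathbb{Z}/p)^r$ is elementary abelian, and by Proposition~\ref{prop:1.1} the different exponent of $E/K_v$ is $(|\Gamma_0|-1)+(|\Gamma_1|-1)$, so $A_{E/K_v}$ has valuation equal to minus half of this — an integer, since $|G|$ is odd. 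The crucial observation is that for every character $\chi$ of $G$ the restriction $\chi|_{\Gamma_1}$ takes values in $\mu_p$ (as $\Gamma_1$ is killed by $p$), so the cyclic subextension of $E/K_v$ it cuts out has wild ramification of order dividing $p$ over $K_v$: character by character, the local situation is precisely the prime-degree case already treated in the proof of Theorem~\ref{thm:1.4}, and the hypothesis that $K_v/\mathbb{Q}_p$ be unramified is what brings the local resolvent of $A_{E/K_v}$ within reach of the Stickelberger-type recipe governing the tame theory. Carrying out that computation for each $\chi$ shows that the content of the resolvent of $A_h$ at $v$ is among those realized by tamely ramified $G$-Galois $K_v$-algebras.

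With the content of the idèlic resolvent of $A_h$ now seen to be, at every prime of $\mathcal{O}_K$, of the type realized by tamely ramified $G$-Galois $K$-algebras, the realizability machinery behind Theorem~\ref{thm:1.3} (and the weak multiplicativity of Theorem~\ref{thm:1.2}(b), which makes the relevant contents combine additively across disjoint ramification) produces a single tame $G$-Galois $K$-algebra $K_{h'}$ — equivalently a class $c'\in\mathcal{A}^t(\mathcal{O}_KG)$ — whose resolvent has the same content as that of $A_h$ at every prime, the primes of $V$ now appearing only tamely in $K_{h'}/K$. Since $\Psi$ sees only these contents, $\mbox{cl}(A_h)\,c'^{-1}\in\ker\Psi$, and therefore $\Psi(\mbox{cl}(A_h))=\Psi(c')\in\Psi(\mathcal{A}^t(\mathcal{O}_KG))$, as claimed.

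The main obstacle is the local computation at the wild primes: extending the prime-degree resolvent computation of Theorem~\ref{thm:1.4} to wild inertia $(\mathbb{Z}/p)^r$ with $r\ge 2$, and verifying that the resulting content remains realizable by a tame extension. This is exactly the step in which the hypothesis $e_{v/\mathbb{Q}}=1$ cannot be dispensed with, and it is also why one can only conclude after applying $\Psi$: for $r\ge 2$ the actual local resolvent of the wild $A_h$, as opposed to merely its content, need not coincide with that of any tame algebra, and passing to the maximal order is precisely what forgets that discrepancy.
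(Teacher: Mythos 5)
Your global strategy is the same as the paper's: reduce to showing that at each wild prime $v$ the local reduced resolvend of a generator of $A_{h_v}$ lies in $rag(M(K_vG)^\times)\,\mathcal{H}(\mathcal{O}_{K_v}G)\,\Theta^t_*(\Lambda(K_vG)^\times)$, so that the discrepancy from the tame description (\ref{eq:10.1}) is absorbed into $\ker\Psi=\partial(KG)^\times U(\mathcal{M}(KG))/\partial(KG)^\times U(\mathcal{O}_KG)$; the tame primes are handled by Theorem~\ref{thm:8.5} exactly as you say. But the local computation at $v\in V$ --- which you yourself flag as ``the main obstacle'' --- is asserted rather than carried out, and the mechanism you propose for it is not the right one. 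Restricting a character $\chi$ to the inertia group $\Gamma_1\cong(\mathbb{Z}/p)^r$ and passing to the cyclic degree-$p$ subextension it cuts out does \emph{not} reduce the resolvent $(a\mid\chi)$ of a generator $a$ of $A_{h_v}$ to the prime-degree computation of Theorem~\ref{thm:1.4}: the square root of the inverse different of the full extension does not restrict to that of a subextension, so ``the local situation is precisely the prime-degree case'' character by character is unjustified. The paper instead factors the local homomorphism as $h_v=h^{nr}h_1\cdots h_r$ with each $h_i$ of order $p$ and image inside $F_{p,2}$, invokes the explicit construction of Proposition~\ref{prop:12.2} to get generators $a_i$ of $A_{h_i}$ with $r_G(a_i)=\Theta^t_*(g_i)$, and multiplies the resolvends.

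Even granting that reduction, the decisive step is missing from your proposal. The product $\textbf{r}_G(a_{nr})\textbf{r}_G(a_1)\cdots\textbf{r}_G(a_r)$ is the resolvend of a generator of $F_{h_v}$ over $K_vG$ but \emph{not} of $A_{h_v}$ over $\mathcal{O}_{K_v}G$ (Proposition~\ref{prop:4.4} fails because all the $h_i$ are ramified; see Remark~\ref{rem:16.2}), so the true generator differs from it by some $\gamma\in(K_vG)^\times$. The whole point of Section~\ref{s:15} (Proposition~\ref{prop:15.2}: for wildly and weakly ramified $h$ with $\zeta_p\notin F$, every resolvent $(a\mid\chi)$ is a unit in $\mathcal{O}_{F^c}^\times$) is to show that this $\gamma$ lies in $M(K_vG)^\times$, which is exactly the statement that the ``content'' discrepancy is invisible to $\Psi$. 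Your proposal never establishes, or even identifies, this unit-valuedness of the wild resolvents; without it the claim that ``the content of the resolvent of $A_h$ at $v$ is among those realized by tamely ramified algebras'' has no support. So the architecture is right but the proof has a genuine gap precisely at its critical point.
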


We remark that Theorems~\ref{thm:1.3} to~\ref{thm:1.5} above are the first known results in the literature concerning the structure of the sets $\mathcal{A}^t(\mathcal{O}_KG)$ and $\mathcal{A}(\mathcal{O}_KG)$.

Here is a brief outline of the contents of this paper. In Sections~\ref{s:2} and~\ref{s:3}, we give a brief review of Galois algebras, resolvends, and locally class groups. We then prove Theorem~\ref{thm:1.2} in Sections~\ref{s:4} and~\ref{s:5}. In Sections~\ref{s:6} and~\ref{s:7}, following \cite[Sections 2 and 4]{M}, we define reduced resolvends and the modified Stickelberger transpose map, which will be key in everything that follows. The crucial step in proving Theorems~\ref{thm:1.3} to~\ref{thm:1.5} is to characterize reduced resolvends of local generators of $A_h$ over $\mathcal{O}_KG$ for any weakly ramified $G$-Galois $K$-algebra $K_h$. We consider the case when $K_h$ is tame in Sections~\ref{s:8} to~\ref{s:10}, and the case when $K_h$ is wild in Sections~\ref{s:11} to~\ref{s:17}.

\noindent\textbf{Notation and Conventions.} Throughout the remaining of this paper, we fix a number field $K$ and an abelian group $G$ of odd order.

The symbol $F$ will denote an arbitrary field extension of either $\mathbb{Q}$ or $\mathbb{Q}_p$ for some prime number $p$. Given any such $F$, we adopt the following notation:
\begin{align*}
\mathcal{O}_F&:=\mbox{the ring of integers in $F$};\\
F^c&:=\mbox{a fixed algebraic closure of $F$};\\
\Omega_F&:=\mbox{Gal}(F^c/F);\\
F^t&:=\mbox{the maximal tamely ramified extension of $F$ in $F^c$};\\
\Omega_F^t&:=\mbox{Gal}(F^t/F);\\
M_F&:=\mbox{the set of all finite primes in $F$};\\
[-1]&:=\mbox{the involution on $F^cG$ induced by the}\\
&\hspace{0.81cm}\mbox{involution $s\mapsto s^{-1}$ on $G$};\\
\hat{G}_F&:=\mbox{the group of irreducible $F^c$-valued characters on $G$};\\
\zeta_{n,F}&:=\mbox{a chosen primitive $n$-th root of unity in $F^c$ for $n\in\mathbb{Z}^+$}
\end{align*}
Moreover, we will let $\Omega_F$ and $\Omega_F^t$ act trivially on $G$. 

For $F$ a number field and $v\in M_F$, we adopt the following notation:
\begin{align*}
F_v&:=\mbox{the completion of $F$ with respect to $v$};\\
i_v&:=\mbox{a fixed embedding $F^c\longrightarrow F_v^c$ extending the natural}\\
&\hspace{0.81cm}\mbox{embedding $F\longrightarrow F_v$};\\
\tilde{i}_v&:=\mbox{the embedding $\Omega_{F_v}\longrightarrow\Omega_{F}$ induced by $i_v$}.
\end{align*}
Moreover, for each $n\in\mathbb{Z}^+$, we take $i_v(\zeta_{n,F})$ to be the chosen primitive $n$-th root of unity in $F_v^c$, where $\zeta_{n,F}$ that in $F^c$.

For $F$ a finite extension of $\mathbb{Q}_p$, we use the following notation:
\begin{align*}
\pi_F&:=\mbox{a chosen uniformizer in $F$};\\
q_F&:=\mbox{the order of the residue field $\mathcal{O}_F/(\pi_F)$};\\
v_F&:=\mbox{the additive valuation $F\longrightarrow\mathbb{Z}$ such that $v_F(\pi_F)=1$}.
\end{align*}
Moreover, given a fractional $\mathcal{O}_F$-ideal $I$ in $F$, we will write 
\[
v_F(I):=\mbox{the highest power of $(\pi_F)$ dividing $I$}.
\]

Finally, we note that only finite primes will be considered in this paper, and that all of the cohomology considered is continuous.


\section{Galois Algebras and Resolvends}\label{s:2}

Let $F$ be a number field or finite extension of $\mathbb{Q}_p$. Following \cite[Section 1]{M}, we give a brief review of Galois algebras and resolvends of their elements.

\begin{definition}\label{def:2.1}A \emph{Galois $F$-algebra with group $G$} or \emph{$G$-Galois $F$-algebra} is a commutative semi-simple $F$-algebra $L$ on which $G$ acts on the left as a group of automorphisms with $L^{G}=F$ and $[L:F]=|G|$. Two $G$-Galois $F$-algebras are said to be \emph{isomorphic} if there is an $F$-algebra isomorphism between them which preserves the action of $G$.
\end{definition}

The set of isomorphism classes of $G$-Galois $F$-algebras is in one-one correspondence with the pointed set
\[
H^1(\Omega_F,G):=\mbox{Hom}(\Omega_F,G)/\mbox{Inn}(G).
\]
In particular, each $h\in\mbox{Hom}(\Omega_F,G)$ is associated to the $F$-algebra
\[
F_{h}:=\mbox{Map}_{\Omega_F}(^{h}G,F^{c}),
\]
where $^{h}G$ is the group $G$ endowed with the $\Omega_F$-action given by
\[
(\omega\cdot s):=h(\omega)s\hspace{1cm}\mbox{for $s\in G$ and $\omega\in\Omega_F$}.
\]
The $G$-action on $F_{h}$ is defined by
\[
(s\cdot a)(t):=a(ts)\hspace{1cm}\mbox{for $a\in F_h$ and $s,t\in G$}.
\]
It is evident that if $\{s_i\}$ is a set of right coset representatives of $h(\Omega_F)$ in $G$, then each $a\in F$ is determined by the values $a(s_i)$, and that each $a(s_i)$ may be arbitrarily chosen provided that it is fixed by all $\omega\in\ker(h)$. Hence, if
\[
F^{h}:=(F^{c})^{\ker(h)},
\]
then this choice $\{s_i\}$ of coset representatives induces an isomorphism
\[
F_{h}\simeq \prod_{h(\Omega_F)\backslash G}F^{h}
\]
of $F$-algebras. Since $h$ induces an isomorphism $\mbox{Gal}(F^h/F)\simeq h(\Omega_F)$, we have
\[
[F_h:F]=[G:h(\Omega_F)][F^h:F]=|G|.
\]
Viewing $F$ as embedded in $F_h$ as the constant $F$-valued functions, we easily see that $F_h^G=F$. Hence, indeed $F_h$ is a $G$-Galois $F$-algebra.

It is not hard to verify that every $G$-Galois $F$-algebra is isomorphic to $F_h$ for some $h\in\mbox{Hom}(\Omega_F,G)$, and that for $h,h'\in\mbox{Hom}(\Omega_F,G)$ we have $F_{h}\simeq F_{h'}$ if and only if $h$ and $h'$ differ by an element in $\mbox{Inn}(G)$. Since $G$ is abelian for us, this implies that the isomorphism classes of $G$-Galois $F$-algebras may be identified with the set $\mbox{Hom}(\Omega_F,G)$ and in particular has a group structure.

\begin{definition}\label{def:2.2}Given $h\in\mbox{Hom}(\Omega_F,G)$, we define
\begin{align*}
F^h&:=(F^c)^{\ker(h)};\\
\mathcal{O}^h&:=\mathcal{O}_{F^h};\\
A^h&:=A_{F^h/F},
\end{align*}
where $A_{F^h/F}$ is the square root of the inverse different of $F^h/F$, which exists because $G$ has odd order. Define the \emph{ring of integers of $F_h$} by
\[
\mathcal{O}_h:=\mbox{Map}_{\Omega_F}(^hG,\mathcal{O}^h)
\]
and the \emph{square root of the inverse different of $F_h/F$} by
\[
A_h:=\mbox{Map}_{\Omega_F}(^hG,A^h).
\]
\end{definition}

\begin{definition}\label{def:2.3}Given $h\in\mbox{Hom}(\Omega_F,G)$, we say that $F_h$ or $h$ is \emph{unramified} if $F^h/F$ is unramified. Similarly for \emph{tame}, \emph{weakly ramified}, and \emph{wildly ramified}.
\end{definition}

Observe that $h\in\mbox{Hom}(\Omega_F,G)$ is tame if and only if it factors through the quotient map $\Omega_F\longrightarrow\Omega_F^t$. In particular, the subgroup of $\mbox{Hom}(\Omega_F,G)$ consisting of all the tame homomorphisms may be identified with $\mbox{Hom}(\Omega_F^t,G)$.

\begin{remark}\label{rem:2.4}Let $F$ be a number field and $v\in M_F$. For $h\in\mbox{Hom}(\Omega_F,G)$, set
\[
h_v:=h\circ\tilde{i}_v\in\mbox{Hom}(\Omega_{F_v},G).
\]
It is proved in \cite[(1.4)]{M} that $(F_v)_{h_v}\simeq F_v\otimes_FF_h$, and
consequently we have
\[
A_{h_v}\simeq\mathcal{O}_{F_v}\otimes_{\mathcal{O}_F}A_h.
\]
\end{remark}

Next, consider the $F^c$-algebra $\mbox{Map}(G,F^c)$, on which we let $G$ act via
\[
(s\cdot a)(t):=a(ts)\hspace{1cm}\mbox{for $a\in\mbox{Map}(\Omega_F,G)$ and $s,t\in G$}.
\]
Note that $F_h$ is an $FG$-submodule of $\mbox{Map}(G,F^c)$ for all $h\in\mbox{Hom}(\Omega_F,G)$.

\begin{definition}\label{def:2.5}Define the \emph{resolvend map} by 
\[
\textbf{r}_{G}=\textbf{r}_{G,F}:\mbox{Map}(G,F^{c})\longrightarrow F^{c}G;\hspace{1em}
\textbf{r}_{G}(a):=\sum\limits _{s\in G}a(s)s^{-1}.
\]
\end{definition}

It is clear that $\textbf{r}_{G}$ is an isomorphism of $F^cG$-modules, but not an isomorphism of $F^cG$-algebras because it does not preserve multiplication. Furthermore, given $a\in\mbox{Map}(G,F^c)$ we have that $a\in F_h$ if and only if
\begin{equation}\label{eq:2.1}
\omega\cdot\textbf{r}_{G}(a)=\textbf{r}_{G}(a)h(\omega)
\hspace{1cm}\mbox{for all }\omega\in\Omega_F.
\end{equation}
In particular, if $\textbf{r}_{G}(a)$ is invertible then $h$ is given by
\[
h(\omega)=\textbf{r}_{G}(a)^{-1}(\omega\cdot\textbf{r}_{G}(a))
\hspace{1cm}\mbox{for all $\omega\in\Omega_F$}.
\]
Resolvends are useful for identifying elements $a\in F_h$ for which $F_h=FG\cdot a$ and elements $a\in A_h$ for which $A_h=\mathcal{O}_FG\cdot a$.

\begin{prop}\label{prop:2.6}
Let $a\in F_{h}$. Then $F_h=FG\cdot a$ if and only if 
\[
\textbf{r}_{G}(a)\in (F^{c}G)^{\times}.
\]
\end{prop}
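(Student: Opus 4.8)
The plan is to pass to an algebraic closure, where the group algebra splits and the statement becomes transparent. Write $\rho_a\colon FG\to F_h$ for the $FG$-module homomorphism $x\mapsto x\cdot a$. Since $\dim_F FG=\dim_F F_h=|G|<\infty$, the condition $F_h=FG\cdot a$ is equivalent to $\rho_a$ being surjective, hence to $\rho_a$ being an isomorphism; and because extension of scalars along the field extension $F\subseteq F^c$ detects isomorphisms, this is in turn equivalent to $\rho_a\otimes_F F^c\colon F^cG\to F^c\otimes_F F_h$ being an isomorphism. Note also that, since $G$ is abelian, $F^cG$ is commutative, which will be used at the very end.

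The key input is that the natural $F^cG$-module homomorphism $\phi\colon F^c\otimes_F F_h\to\mbox{Map}(G,F^c)$, $\lambda\otimes b\mapsto\lambda b$ (pointwise product), is an isomorphism. This is Galois descent applied to the \'etale $F$-algebra $F_h=\mbox{Map}_{\Omega_F}({}^hG,F^c)$; alternatively it can be read off from the decomposition $F_h\simeq\prod_{h(\Omega_F)\backslash G}F^h$ together with the standard isomorphism $F^h\otimes_F F^c\simeq\prod_{\mathrm{Gal}(F^h/F)}F^c$ for the separable extension $F^h/F$. Granting this, I would compose with the resolvend isomorphism $\mathbf{r}_G\colon\mbox{Map}(G,F^c)\xrightarrow{\ \sim\ }F^cG$, after first checking directly that $\mathbf{r}_G$ is $G$-equivariant, i.e. $\mathbf{r}_G(s\cdot c)=s\,\mathbf{r}_G(c)$ for $s\in G$ (a one-line change of summation variable), hence $F^cG$-linear, and that $1\otimes a$ is carried to $\mathbf{r}_G(a)$. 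Thus the composite $\mathbf{r}_G\circ\phi\circ(\rho_a\otimes_F F^c)\colon F^cG\to F^cG$ is precisely multiplication by $\mathbf{r}_G(a)$.

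It then remains to observe that, $F^cG$ being commutative, multiplication by $\mathbf{r}_G(a)$ is an automorphism of $F^cG$ if and only if $\mathbf{r}_G(a)\in(F^cG)^\times$: surjectivity produces $x$ with $x\,\mathbf{r}_G(a)=1$, and the converse is immediate. Chaining the equivalences yields $F_h=FG\cdot a\iff\mathbf{r}_G(a)\in(F^cG)^\times$. I would also record the ``if'' direction by the elementary argument avoiding $\phi$: if $\mathbf{r}_G(a)\in(F^cG)^\times$ and $\sum_{s\in G}c_s(s\cdot a)=0$ with $c_s\in F$, then applying $\mathbf{r}_G$ gives $\bigl(\sum_{s\in G}c_ss\bigr)\mathbf{r}_G(a)=0$, so $\sum_{s\in G}c_ss=0$ and every $c_s$ vanishes; hence $\{s\cdot a:s\in G\}$ is $F$-linearly independent in $F_h$, and a dimension count forces $F_h=FG\cdot a$.

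The main obstacle is the isomorphism $\phi$: this is exactly where the hypothesis that $F_h$ is a $G$-Galois $F$-algebra — rather than an arbitrary $FG$-module — enters, encoding that $F_h$ is an $F$-form of the split algebra $\mbox{Map}(G,F^c)$. Everything downstream of that point is formal bookkeeping with the resolvend map and the commutativity of $F^cG$.
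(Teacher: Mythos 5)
Your argument is correct. Note that the paper gives no proof of its own here --- it simply cites \cite[Proposition 1.8]{M} --- and your proof is essentially the standard one underlying that reference: base-change to $F^c$, where Galois descent splits $F_h$ as $\mathrm{Map}(G,F^c)$, and then use the $F^cG$-linearity of $\mathbf{r}_G$ to identify the extended multiplication-by-$a$ map with multiplication by $\mathbf{r}_G(a)$ on the commutative ring $F^cG$. You correctly isolate the isomorphism $F^c\otimes_F F_h\simeq \mathrm{Map}(G,F^c)$ as the one genuinely nontrivial input, and both justifications you offer for it are sound.
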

\begin{proof}See \cite[Proposition 1.8]{M}.
\end{proof}

Recall that we have the standard algebra trace map
\[
Tr=Tr_F:\mbox{Map}(G,F^c)\longrightarrow F^c;\hspace{1em}Tr(a):=\sum_{s\in G}a(s).
\]
Via restriction, this yields a trace map $F_{h}\longrightarrow F$ for each $h\in\mbox{Hom}(\Omega_F,G)$, which we still denote by $Tr$ by abuse of notation.

\begin{definition}\label{def:2.7}Let $h\in\mbox{Hom}(\Omega_F,G)$ and $M$ an $\mathcal{O}_F$-lattice in $F_h$. The \emph{dual of $M$ with respect to $Tr$} is defined to be the $\mathcal{O}_F$-module
\[
M^*:=\{a\in F_h\mid Tr(aM)\subset\mathcal{O}_F\}
\]
We say that $M$ is \emph{self-dual} if $M=M^*$.
\end{definition}

It is well-known for any field extension $L/F$, its square root of the inverse different $A_{L/F}$ is self-dual with respect to the trace map of $L/F$ (this follows from \cite[Chapter 3 (2.14)]{FT}, for example). From this, we see that $A_h$ is self-dual for all $h\in\mbox{Hom}(\Omega_F,G)$. This fact will be important.

If $F_h=FG\cdot a$, then $\mathcal{O}_FG\cdot a$ is an $\mathcal{O}_F$-lattice in $F_h$ and so we may consider its dual. Resolvends may be used to detect whether $\mathcal{O}_FG\cdot a$ is self-dual. To that end, first recall that $[-1]$ denotes the involution on $F^cG$ induced by the involution $s\mapsto s^{-1}$ on $G$. A simple calculation shows that
\begin{equation}\label{eq:2.2}
\textbf{r}_{G}(a)\textbf{r}_{G}(b)^{[-1]}=\sum_{s\in G}Tr((s\cdot a)b)s^{-1}\hspace{1cm}\mbox{for }a,b\in F_h.
\end{equation}

\begin{prop}
\label{prop:2.8}Let $F_h=FG\cdot a$. Then $\mathcal{O}_FG\cdot a$ is self-dual if and only if
\[
\textbf{r}_G(a)\textbf{r}_G(a)^{[-1]}\in(\mathcal{O}_FG)^{\times}.
\]
\end{prop}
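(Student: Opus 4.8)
The plan is to mimic the proof of Proposition~\ref{prop:2.6} and exploit the explicit formula~\eqref{eq:2.2}, which already expresses the product $\textbf{r}_G(a)\textbf{r}_G(a)^{[-1]}$ in terms of the trace pairing on $F_h$. First I would set $M:=\mathcal{O}_FG\cdot a$, which is an $\mathcal{O}_F$-lattice in $F_h$ by Proposition~\ref{prop:2.6} (since $F_h=FG\cdot a$ forces $\textbf{r}_G(a)\in(F^cG)^\times$). The dual lattice $M^*$ is again of the form $\mathcal{O}_FG\cdot b$ for a uniquely determined $b\in F_h$ with $F_h=FG\cdot b$; this is because the $\mathcal{O}_FG$-module structure makes $M$ free of rank one, the trace form $Tr(xy)$ on $F_h$ is nondegenerate, and the dual of a free rank-one $\mathcal{O}_FG$-lattice with respect to a $G$-invariant nondegenerate pairing is again free of rank one over $\mathcal{O}_FG$. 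Concretely, $b$ is characterized by $Tr((s\cdot a)b)=\delta_{s,1}$ for all $s\in G$, i.e. by the requirement that $\textbf{r}_G(a)\textbf{r}_G(b)^{[-1]}=1$ via~\eqref{eq:2.2}; equivalently $\textbf{r}_G(b)^{[-1]}=\textbf{r}_G(a)^{-1}$, so $\textbf{r}_G(b)=\big(\textbf{r}_G(a)^{[-1]}\big)^{-1}=\big(\textbf{r}_G(a)^{-1}\big)^{[-1]}$ (using that $[-1]$ is an algebra involution, hence commutes with inversion).

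With this set-up, $M$ is self-dual if and only if $M=M^*$, i.e. if and only if $\mathcal{O}_FG\cdot a=\mathcal{O}_FG\cdot b$, which happens exactly when $b=u\cdot a$ for some $u\in(\mathcal{O}_FG)^\times$. Applying the resolvend map and using that $\textbf{r}_G$ is an $F^cG$-module isomorphism, this is equivalent to $\textbf{r}_G(b)=u\,\textbf{r}_G(a)$ for some $u\in(\mathcal{O}_FG)^\times$. Substituting the formula for $\textbf{r}_G(b)$ from the previous paragraph, $\big(\textbf{r}_G(a)^{-1}\big)^{[-1]}=u\,\textbf{r}_G(a)$, which rearranges (multiply through by $\textbf{r}_G(a)^{[-1]}$ on the right, noting $\big(\textbf{r}_G(a)^{-1}\big)^{[-1]}\textbf{r}_G(a)^{[-1]}=\big(\textbf{r}_G(a)^{-1}\textbf{r}_G(a)\big)^{[-1]}=1$... careful: rather, from $\textbf{r}_G(a)\textbf{r}_G(b)^{[-1]}=1$ and $b=ua$ we get $\textbf{r}_G(a)\textbf{r}_G(a)^{[-1]}u^{[-1]}=1$) to $\textbf{r}_G(a)\textbf{r}_G(a)^{[-1]}=(u^{[-1]})^{-1}\in(\mathcal{O}_FG)^\times$, since $[-1]$ preserves $(\mathcal{O}_FG)^\times$. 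Conversely, if $\textbf{r}_G(a)\textbf{r}_G(a)^{[-1]}=w\in(\mathcal{O}_FG)^\times$, then reading~\eqref{eq:2.2} backwards shows that $w^{-1}\cdot a$ is exactly the element $b$ generating $M^*$, so $M^*=\mathcal{O}_FG\cdot(w^{-1}a)=\mathcal{O}_FG\cdot a=M$.

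The main obstacle is the structural claim that $M^*$ is again a free rank-one $\mathcal{O}_FG$-module, or equivalently that there is a well-defined $b\in F_h$ with $\textbf{r}_G(b)=\big(\textbf{r}_G(a)^{-1}\big)^{[-1]}$ lying in $F_h$ and generating $M^*$; everything else is bookkeeping with the involution and~\eqref{eq:2.2}. The cleanest route is to verify directly that the element $b\in\mbox{Map}(G,F^c)$ defined by $\textbf{r}_G(b):=\big(\textbf{r}_G(a)^{-1}\big)^{[-1]}$ satisfies the Galois-equivariance condition~\eqref{eq:2.1} for the same $h$ — this uses that $\Omega_F$ acts trivially on $G$, that $[-1]$ commutes with the $\Omega_F$-action, and that $h(\omega)^{[-1]}=h(\omega)^{-1}$ — so that $b\in F_h$; then~\eqref{eq:2.2} gives $Tr((s\cdot a)b)=\delta_{s,1}$, which identifies $\mathcal{O}_FG\cdot b$ with $(\mathcal{O}_FG\cdot a)^*$ by unwinding Definition~\ref{def:2.7} against an $\mathcal{O}_FG$-basis. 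Alternatively one can simply cite that this is \cite[Proposition 1.9 or 1.10]{M} or the analogous statement in the literature, as the paper does for Propositions~\ref{prop:2.6} and~\ref{prop:1.1}.
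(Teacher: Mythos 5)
Your proposal is correct and takes essentially the same route as the paper: identify the dual lattice $(\mathcal{O}_FG\cdot a)^*$ as $\mathcal{O}_FG\cdot b$ where $b$ is the dual generator characterized by $Tr((s\cdot a)b)=\delta_{s,1}$, use (\ref{eq:2.2}) to get $\textbf{r}_G(b)^{-1}=\textbf{r}_G(a)^{[-1]}$, and translate self-duality into the unit condition. The only difference is that you explicitly justify the existence of $b$ in $F_h$ (via the equivariance check on $\bigl(\textbf{r}_G(a)^{-1}\bigr)^{[-1]}$), a point the paper simply asserts by invoking the dual basis.
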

\begin{proof}Let $b\in F_{h}$ be such that $\{s\cdot b\mid s\in G\}$ is the dual basis of $\{s\cdot a \mid s\in G\}$ with respect to $Tr$, so we have $(\mathcal{O}_FG\cdot a)^{*}=\mathcal{O}_FG\cdot b$. This implies that $\mathcal{O}_FG\cdot a$ is self-dual if and only if $\mathcal{O}_FG\cdot a=\mathcal{O}_FG\cdot b$, which in turn is equivalent to
\[
\textbf{r}_G(a)\textbf{r}_G(b)^{-1}\in(\mathcal{O}_FG)^\times.
\]
But $\textbf{r}_G(b)^{-1}=\textbf{r}_G(a)^{[-1]}$ as a consequence of (\ref{eq:2.2}), so the claim follows. 
\end{proof}

\begin{cor}\label{cor:2.9}Let $a\in A_h$. Then $A_h=\mathcal{O}_FG\cdot a$ if and only if
\[
\textbf{r}_G(a)\textbf{r}_G(a)^{[-1]}\in(\mathcal{O}_FG)^{\times}.
\]
\end{cor}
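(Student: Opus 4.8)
The plan is to deduce this directly from Proposition~\ref{prop:2.8} together with the self-duality of $A_h$ with respect to $Tr$ recorded above; no new machinery is needed. I would split into the two implications.

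For the forward direction, suppose $A_h=\mathcal{O}_FG\cdot a$. Since $A_h$ is a full $\mathcal{O}_F$-lattice in $F_h$, extending scalars gives $F_h=F\otimes_{\mathcal{O}_F}A_h=FG\cdot a$, so Proposition~\ref{prop:2.8} is applicable to $a$. As $A_h$ is self-dual, the lattice $\mathcal{O}_FG\cdot a=A_h$ is self-dual, and Proposition~\ref{prop:2.8} then yields $\textbf{r}_G(a)\textbf{r}_G(a)^{[-1]}\in(\mathcal{O}_FG)^{\times}$.

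For the converse, suppose $\textbf{r}_G(a)\textbf{r}_G(a)^{[-1]}\in(\mathcal{O}_FG)^{\times}\subseteq(F^cG)^{\times}$. Since $F^cG$ is commutative, a product of two elements can be a unit only if each factor is a unit, so $\textbf{r}_G(a)\in(F^cG)^{\times}$; by Proposition~\ref{prop:2.6} this gives $F_h=FG\cdot a$, and Proposition~\ref{prop:2.8} then shows $\mathcal{O}_FG\cdot a$ is self-dual. Because $a\in A_h$ and $A_h$ is an $\mathcal{O}_FG$-module, we have the inclusion $\mathcal{O}_FG\cdot a\subseteq A_h$ of lattices in $F_h$; taking duals with respect to $Tr$ reverses inclusions, so $A_h=A_h^{*}\subseteq(\mathcal{O}_FG\cdot a)^{*}=\mathcal{O}_FG\cdot a$, whence $A_h=\mathcal{O}_FG\cdot a$.

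I do not expect a genuine obstacle here, since both directions are short applications of the preceding results. The only points needing a little care are: recording in each case that $F_h=FG\cdot a$ holds (via extension of scalars in one direction, via Proposition~\ref{prop:2.6} in the other), since Proposition~\ref{prop:2.8} is stated only under that hypothesis; and using that duality with respect to $Tr$ is inclusion-reversing on $\mathcal{O}_F$-lattices in $F_h$.
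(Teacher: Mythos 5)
Your proof is correct and follows essentially the same route as the paper: reduce to self-duality of $\mathcal{O}_FG\cdot a$ via Proposition~\ref{prop:2.8} and the chain $\mathcal{O}_FG\cdot a\subset A_h=A_h^*\subset(\mathcal{O}_FG\cdot a)^*$. You merely spell out more explicitly why $F_h=FG\cdot a$ holds under each hypothesis (extension of scalars in one direction, the unit-factor argument plus Proposition~\ref{prop:2.6} in the other), which the paper compresses into one sentence.
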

\begin{proof}By Proposition~\ref{prop:2.6}, both statements imply that $\mathcal{O}_FG\cdot a$ is $F_h=FG\cdot a$. Assuming so, we may consider $(\mathcal{O}_FG\cdot a)^*$. Since $a\in A_h$, we have
\[
\mathcal{O}_FG\cdot a\subset A_{h}=A_{h}^{*}\subset(\mathcal{O}_FG\cdot a)^{*},
\]
which shows that $A_h=\mathcal{O}_FG\cdot a$ if and only if $\mathcal{O}_FG\cdot a$ is self-dual. The claim now follows from Proposition~\ref{prop:2.8}.
\end{proof}

\begin{cor}\label{cor:2.10}Let $a\in F_h$. Then, the two statements  $\mathcal{O}_h=\mathcal{O}_FG\cdot a$ and $h$ is unramified hold true simultaneously if and only if
\begin{equation}\label{eq:2.3}
\textbf{r}_G(a)\in(\mathcal{O}_{F^c}G)^{\times}.
\end{equation}
Moreover, for $F$ a finite extension of $\mathbb{Q}_p$, there always exists $a\in F_h$ satisfying (\ref{eq:2.3}) whenever $h$ is unramified.
\end{cor}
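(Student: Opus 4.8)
The strategy is to rephrase both conditions in terms of the $\mathcal{O}_{F^c}$-lattice $\mathcal{O}_{F^c}\cdot\mathcal{O}_h$ inside the split algebra $F^c\otimes_F F_h\simeq\mbox{Map}(G,F^c)$. Since $\textbf{r}_G$ is an isomorphism of $F^cG$-modules carrying $\mbox{Map}(G,\mathcal{O}_{F^c})$ onto $\mathcal{O}_{F^c}G$, and since $\mathcal{O}_{F^c}G$ is commutative, an element $x\in\mathcal{O}_{F^c}G$ is a unit if and only if $(\mathcal{O}_{F^c}G)x=\mathcal{O}_{F^c}G$. Applying this to $x=\textbf{r}_G(a)$ shows that (\ref{eq:2.3}) holds if and only if $a\in\mbox{Map}(G,\mathcal{O}_{F^c})$ and $\mathcal{O}_{F^c}G\cdot a=\mbox{Map}(G,\mathcal{O}_{F^c})$; moreover, when $\mathcal{O}_h=\mathcal{O}_FG\cdot a$ one has $\mathcal{O}_{F^c}G\cdot a=\mathcal{O}_{F^c}\cdot\mathcal{O}_h$. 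Thus the corollary reduces to two assertions, which I would establish for $F$ a finite extension of $\mathbb{Q}_p$ and then deduce for $F$ a number field by localizing at each finite prime (Remark~\ref{rem:2.4}): that if $h$ is unramified then $\mathcal{O}_{F^c}\cdot\mathcal{O}_h=\mbox{Map}(G,\mathcal{O}_{F^c})$, and that (\ref{eq:2.3}) forces $h$ to be unramified.

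For the first assertion I would use the $F$-algebra decomposition $\mathcal{O}_h\simeq\prod_{h(\Omega_F)\backslash G}\mathcal{O}^h$ afforded by a choice of coset representatives. When $h$ is unramified, $F^h$ lies in the maximal unramified extension $F^{\mathrm{nr}}$ of $F$ and $\mathcal{O}^h=\mathcal{O}_F[\theta]$ for some $\theta$ whose minimal polynomial has separable reduction modulo the maximal ideal; the Chinese Remainder Theorem then gives $\mathcal{O}_{F^{\mathrm{nr}}}\otimes_{\mathcal{O}_F}\mathcal{O}^h\simeq(\mathcal{O}_{F^{\mathrm{nr}}})^{[F^h:F]}$, compatibly with the embeddings into the corresponding split algebras. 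Reassembling the factors yields $\mathcal{O}_{F^{\mathrm{nr}}}\cdot\mathcal{O}_h=\mbox{Map}(G,\mathcal{O}_{F^{\mathrm{nr}}})$, and extension of scalars to $\mathcal{O}_{F^c}$ gives the first assertion. Granting it, the forward implication of the corollary is immediate: if $\mathcal{O}_h=\mathcal{O}_FG\cdot a$ and $h$ is unramified, then $a\in\mathcal{O}_h\subset\mbox{Map}(G,\mathcal{O}_{F^c})$ and $\mathcal{O}_{F^c}G\cdot a=\mathcal{O}_{F^c}\cdot\mathcal{O}_h=\mbox{Map}(G,\mathcal{O}_{F^c})$, so (\ref{eq:2.3}) holds.

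Conversely, suppose (\ref{eq:2.3}) holds. Then $\textbf{r}_G(a)\in(F^cG)^\times$, so $F_h=FG\cdot a$ by Proposition~\ref{prop:2.6} and $h(\omega)=\textbf{r}_G(a)^{-1}(\omega\cdot\textbf{r}_G(a))$ for every $\omega\in\Omega_F$. Reducing modulo the maximal ideal of $\mathcal{O}_{F^c}$ gives a ring homomorphism $\mathcal{O}_{F^c}G\to\kappa G$, where $\kappa$ is the residue field, under which $\textbf{r}_G(a)$ has unit image. If $\omega$ lies in the inertia subgroup of $\Omega_F$, then $\omega$ acts trivially on $\kappa$, so $\textbf{r}_G(a)$ and $\omega\cdot\textbf{r}_G(a)$ have the same image in $\kappa G$; hence $h(\omega)$ has image $1\in\kappa G$, and since the elements of $G$ are $\kappa$-linearly independent in $\kappa G$ this forces $h(\omega)=1$. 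Therefore $h$ is unramified. Now $a\in F_h\cap\mbox{Map}(G,\mathcal{O}_{F^c})=\mathcal{O}_h$, so $\mathcal{O}_FG\cdot a\subset\mathcal{O}_h$ is an inclusion of $\mathcal{O}_F$-lattices of full rank whose images in $\mbox{Map}(G,F^c)$ have the same $\mathcal{O}_{F^c}$-span, namely $\mbox{Map}(G,\mathcal{O}_{F^c})$ by the first assertion; faithful flatness of $\mathcal{O}_{F^c}$ over the discrete valuation ring $\mathcal{O}_F$ then yields $\mathcal{O}_FG\cdot a=\mathcal{O}_h$.

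Finally, for the last assertion of the corollary: when $h$ is unramified, $\mathcal{O}_h$ is a free $\mathcal{O}_FG$-module — this is the easy, unramified case of Noether's theorem, and can also be obtained by lifting a normal basis of the residue field extension of $F^h/F$ via Nakayama's lemma and inducing up to $G$ — so any $\mathcal{O}_FG$-generator $a$ of $\mathcal{O}_h$ satisfies (\ref{eq:2.3}) by the equivalence established above. I expect the main obstacle to be the first assertion, i.e.\ distilling from mere unramifiedness the precise integral statement that $\mathcal{O}_h$ becomes $\mbox{Map}(G,\mathcal{O}_{F^c})$ after extension of scalars; the remaining steps are formal resolvend manipulations together with the reduction-modulo-$\mathfrak{m}$ observation used in the converse.
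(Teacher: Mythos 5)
Your proof is correct, but it takes a genuinely different route from the paper's. The paper deduces the corollary from Corollary~\ref{cor:2.9}: for $a\in\mathcal{O}_h$ the condition $\textbf{r}_G(a)\in(\mathcal{O}_{F^c}G)^{\times}$ is equivalent to $\textbf{r}_G(a)\textbf{r}_G(a)^{[-1]}\in(\mathcal{O}_FG)^{\times}$, which by self-duality of $A_h$ is equivalent to $A_h=\mathcal{O}_FG\cdot a$; combined with the chain $\mathcal{O}_FG\cdot a\subset\mathcal{O}_h\subset A_h$ and the fact that $\mathcal{O}_h=A_h$ exactly when $h$ is unramified, this gives the equivalence in two lines. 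You instead translate (\ref{eq:2.3}) into the lattice statement $\mathcal{O}_{F^c}G\cdot a=\mbox{Map}(G,\mathcal{O}_{F^c})$, prove that unramifiedness is equivalent to the integral splitting $\mathcal{O}_{F^c}\cdot\mathcal{O}_h=\mbox{Map}(G,\mathcal{O}_{F^c})$ (via monogenicity of $\mathcal{O}^h$ and the unit discriminant), detect unramifiedness from (\ref{eq:2.3}) by reducing resolvends modulo the maximal ideal of $\mathcal{O}_{F^c}$ and using that inertia acts trivially on the residue field, and close with faithfully flat descent along $\mathcal{O}_F\to\mathcal{O}_{F^c}$. All of these steps are sound (in particular, the global case does reduce to the local one as you say, since a lattice equality, unramifiedness, and membership in $(\mathcal{O}_{F^c}G)^\times$ can each be checked at every finite place). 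What the paper's argument buys is brevity and reuse of machinery already in place, at the cost of routing through $A_h$ and hence through the standing odd-order hypothesis; what yours buys is a self-contained proof that makes the content ``unramified $=$ integrally split over $\mathcal{O}_{F^{nr}}$'' explicit and that would work for an arbitrary finite abelian $G$, with no reference to the square root of the inverse different. Both proofs appeal to Noether's theorem in the unramified case for the final existence assertion.
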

\begin{proof}Note that both $\mathcal{O}_h=\mathcal{O}_FG\cdot a$ and (\ref{eq:2.3}) implies that $a\in\mathcal{O}_h$. Assuming that this is the case, we see that $\textbf{r}_G(a)\in(\mathcal{O}_{F^c}G)^{\times}$ is equivalent to
\[
\textbf{r}_G(a)\textbf{r}_G(a)^{[-1]}\in(\mathcal{O}_{F}G)^{\times}
\]
because $[-1]$ induces an involution on $(\mathcal{O}_{F^c}G)^\times$. Since $a\in\mathcal{O}_h$ and $\mathcal{O}_h\subset A_h$, we deduce from Corollary~\ref{cor:2.9} that (\ref{eq:2.3}) occurs precisely when
\[
A_h=\mathcal{O}_FG\cdot a=\mathcal{O}_h,
\]
or equivalently, when $\mathcal{O}_h=\mathcal{O}_FG\cdot a$ and $h$ is unramified. The claim after (\ref{eq:2.3}) follows from a classical theorem of Noether or from \cite[Proposition 5.5]{M}.
\end{proof}


\section{Locally Free Class Groups}\label{s:3}

Let $F$ be a number field and $\mathfrak{A}$ an $\mathcal{O}_F$-order in $KG$. We recall certain basic facts concerning the locally free class group $\mbox{Cl}(\mathfrak{A})$ of $\mathfrak{A}$.

\begin{definition}\label{def:3.1}For each $v\in M_F$, define
\[
\mathfrak{A}_v:=\mathcal{O}_{F_v}\otimes_{\mathcal{O}_F}\mathfrak{A}.
\]
Similarly, given an $\mathfrak{A}$-lattice $M$ and $v\in M_F$, define
\[
M_v:=\mathcal{O}_{F_v}\otimes_{\mathcal{O}_F}M.
\]
Moreover, we write $\mbox{cl}(M)$ for the class determined by $M$ in $\mbox{Cl}(\mathfrak{A})$.
\end{definition}

\begin{definition}\label{def:3.2}
Let $J(FG)$ be the restricted direct product of $(F_vG)^\times$ with respect to the subgroups $\mathfrak{A}_v^\times$ as $v$ ranges over $M_F$. This is independent of the choice of $\mathcal{O}_F$-order $\mathfrak{A}$, for if $\mathfrak{A}'$ is another $\mathcal{O}_F$-order in $FG$, then $\mathfrak{A}_v=\mathfrak{A}'_v$ for almost all $v\in M_F$. Let
\[
\partial=\partial_F:(FG)^\times\longrightarrow J(FG)
\]
denote the diagonal map and
\[
U(\mathfrak{A}):=\prod_{v\in M_F}\mathfrak{A}_v
\]
the group of unit id\`{e}les.
\end{definition}

For each $c\in J(FG)$, we define
\[
\mathfrak{A}\cdot c:=\left(\bigcap_{v\in M_F}\mathfrak{A}_v\cdot c_v\right)\cap FG,
\]
which is a locally free $\mathfrak{A}$-module in $FG$.

\begin{thm}\label{thm:3.3}The map
\[
j_\mathfrak{A}:J(FG)\longrightarrow\mbox{Cl}(\mathfrak{A});\hspace{1em}j_\mathfrak{A}(c):=\mbox{cl}(\mathfrak{A}\cdot c),
\]
is a surjective homomorphism whose kernel is given by
\[
\ker(j_\mathfrak{A})=\partial(FG)^\times U(\mathfrak{A}).
\]
In particular, we have an isomorphism
\[
\mbox{Cl}(\mathfrak{A})\simeq\frac{J(FG)}{\partial(FG)^\times U(\mathfrak{A})}
\]
\end{thm}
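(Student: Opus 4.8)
The statement to prove is Theorem~\ref{thm:3.3}, the id\`{e}lic description of the locally free class group. The plan is to exploit the fact that $\mathrm{Cl}(\mathfrak{A})$ is built from locally free $\mathfrak{A}$-modules of rank one (or, more precisely, from locally free $\mathfrak{A}$-modules in general, but the structure theory reduces everything to comparing lattices inside the single algebra $FG$). Since $FG$ is semisimple, every locally free $\mathfrak{A}$-module $M$ becomes free of rank one over $F_vG$ after localizing at each $v$ and tensoring up to $F_v G$, so one can pick local generators and assemble them into an id\`{e}le; conversely every id\`{e}le $c \in J(FG)$ produces the lattice $\mathfrak{A}\cdot c$, and these two constructions are mutually inverse up to the appropriate equivalence. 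The first step is therefore to verify that $c \mapsto \mathfrak{A}\cdot c$ is well-defined, i.e.\ that $\mathfrak{A}\cdot c$ is indeed a locally free $\mathfrak{A}$-module, using that $c_v \in \mathfrak{A}_v^\times$ for almost all $v$ so that $(\mathfrak{A}\cdot c)_v = \mathfrak{A}_v c_v$ for every $v$ and equals $\mathfrak{A}_v$ for almost all $v$.

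Next I would prove multiplicativity: $j_\mathfrak{A}(cc') = j_\mathfrak{A}(c) j_\mathfrak{A}(c')$. This comes down to the identity $(\mathfrak{A}\cdot c)\otimes_\mathfrak{A}(\mathfrak{A}\cdot c') \cong \mathfrak{A}\cdot(cc')$, which one checks locally at each $v$: over the (not necessarily commutative, but local) ring $\mathfrak{A}_v$ the modules $\mathfrak{A}_v c_v$ are free of rank one, and tensoring multiplies the generators. Surjectivity is then straightforward: given any locally free rank-one $\mathfrak{A}$-module $M$, embed it in $FG \cong F \otimes_{\mathcal{O}_F} M$ (choosing an isomorphism after extending scalars), pick local generators $M_v = \mathfrak{A}_v \lambda_v$, and set $c = (\lambda_v)_v$; then $\mathfrak{A}\cdot c = M$ inside $FG$, so $j_\mathfrak{A}(c) = \mathrm{cl}(M)$. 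For general (higher-rank) locally free modules one uses that $\mathrm{Cl}(\mathfrak{A})$ is defined via stable isomorphism, so every class is represented by a rank-one module; this is standard and I would just cite it or recall it briefly.

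The heart of the proof — and the step I expect to be the main obstacle to write cleanly — is the computation of the kernel. One inclusion is easy: if $c = \partial(x) u$ with $x \in (FG)^\times$ and $u \in U(\mathfrak{A})$, then $\mathfrak{A}\cdot c = x\mathfrak{A}$ (multiplication by the global unit $x$), which is $\mathfrak{A}$-isomorphic to $\mathfrak{A}$, so $j_\mathfrak{A}(c) = 0$. The reverse inclusion is the content: if $\mathfrak{A}\cdot c$ is a free $\mathfrak{A}$-module, say $\mathfrak{A}\cdot c = x\mathfrak{A}$ for some $x \in FG$ with $x$ a unit in $FG$ (it must be a unit since the localizations are free of rank one), then for each $v$ we have $\mathfrak{A}_v c_v = x\mathfrak{A}_v = \mathfrak{A}_v(x)$ as $\mathfrak{A}_v$-submodules of $F_vG$... wait, $x$ central? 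In general $\mathfrak{A}$ need not be commutative, but in our setting $G$ is abelian so $FG$ is commutative and this subtlety disappears; I would note that and then conclude $c_v x^{-1} \in \mathfrak{A}_v^\times$, i.e.\ $c \partial(x)^{-1} \in U(\mathfrak{A})$, giving $c \in \partial(FG)^\times U(\mathfrak{A})$. The isomorphism $\mathrm{Cl}(\mathfrak{A}) \cong J(FG)/\partial(FG)^\times U(\mathfrak{A})$ is then immediate from the first isomorphism theorem. Throughout, the delicate points are: justifying that a locally trivial module really has a global generator making it free (this uses that $\mathrm{Cl}$ is a quotient by stable isomorphism, or equivalently a weak approximation / localization argument), and keeping the bookkeeping of ``almost all $v$'' conditions consistent. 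Since all of this is classical, I would present it compactly and refer the reader to a standard reference such as Fr\"ohlich's work or \cite{M} for the details that are purely formal.
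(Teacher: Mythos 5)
The paper does not prove this theorem at all: it cites \cite[Theorem 49.22 and Exercise 51.1]{C-R}. Your sketch is the standard argument that those references contain, and most of it is sound: well-definedness of $c\mapsto\mathfrak{A}\cdot c$, surjectivity via local generators of a rank-one representative, and the easy inclusion $\partial(FG)^\times U(\mathfrak{A})\subset\ker(j_\mathfrak{A})$ are all correctly outlined. Two remarks on where your route differs from, or falls short of, the cited one.

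First, your multiplicativity argument via $(\mathfrak{A}\cdot c)\otimes_{\mathfrak{A}}(\mathfrak{A}\cdot c')\cong\mathfrak{A}\cdot(cc')$ is valid only because $G$ is abelian here, so $\mathfrak{A}$ is commutative; the Curtis--Reiner proof establishes the direct-sum relation $\mathfrak{A}\cdot c\oplus\mathfrak{A}\cdot c'\cong\mathfrak{A}\oplus\mathfrak{A}\cdot(cc')$, which is what one needs for the group law on $\mbox{Cl}(\mathfrak{A})$ for an arbitrary order. Since you explicitly invoke commutativity, this is a legitimate simplification rather than an error, but you should state the direct-sum relation anyway, because the group operation on $\mbox{Cl}(\mathfrak{A})$ is defined through direct sums and stable isomorphism, not tensor products.

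Second, and this is the genuine gap: in the kernel computation you pass from ``$j_{\mathfrak{A}}(c)$ is the trivial class'' to ``$\mathfrak{A}\cdot c$ is a free $\mathfrak{A}$-module $x\mathfrak{A}$.'' Triviality in $\mbox{Cl}(\mathfrak{A})$ means only that $\mathfrak{A}\cdot c$ is \emph{stably} free, and stably free does not imply free for orders in general (e.g.\ over $\mathbb{Z}[Q_8]$). The justification you offer --- ``weak approximation / localization'' --- does not address this; local freeness is already known and is not the issue. What closes the gap here is commutativity again: for a rank-one locally free module $M$ over the commutative ring $\mathfrak{A}$, applying the top exterior power to $M\oplus\mathfrak{A}^{k}\cong\mathfrak{A}^{k+1}$ gives $M\cong\mathfrak{A}$ (equivalently, one may invoke Jacobinski cancellation, since $FG$ is a product of fields and so satisfies the Eichler condition). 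With that step supplied, your conclusion $c\,\partial(x)^{-1}\in U(\mathfrak{A})$ and the identification of the kernel go through.
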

\begin{proof}See \cite[Theorem 49.22 and Exercise 51.1]{C-R}, for example.
\end{proof}

\begin{prop}\label{prop:3.4}
Let $M$ be a locally free $\mathfrak{A}$-module. Suppose that
\[
F\otimes_{\mathcal{O}_F}M=FG\cdot b
\]
and that for each $v\in M_F$, we have
\[
M_v=\mathfrak{A}_v\cdot a_v.
\]
Since $b$ and $a_v$ are both free generators of $K_v\otimes_{\mathcal{O}_F}M$ over $F_vG$, we have
\[
a_v=c_v\cdot b\hspace{1cm}\mbox{for some $c_v\in (F_vG)^\times$ and for each $v\in M_F$}.
\]
Then, we have $c:=(c_v)\in J(FG)$ and $j_\mathfrak{A}(c)=\mbox{cl}(M)$.
\end{prop}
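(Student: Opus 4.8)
The plan is to unwind the definitions on both sides of the claimed equality $j_{\mathfrak{A}}(c) = \mathrm{cl}(M)$ and to show directly that $M = \mathfrak{A}\cdot c$ as $\mathfrak{A}$-submodules of $FG$, after identifying $F\otimes_{\mathcal{O}_F}M$ with $FG$ via the free generator $b$. Concretely, I would use the isomorphism of $FG$-modules $F\otimes_{\mathcal{O}_F}M \xrightarrow{\sim} FG$ sending $b\mapsto 1$, and transport $M$ to a locally free $\mathfrak{A}$-lattice $\widetilde{M}\subseteq FG$. Under the induced local isomorphisms $F_v\otimes_{\mathcal{O}_F}M \xrightarrow{\sim} F_vG$, the local generator $a_v = c_v\cdot b$ is carried to $c_v\cdot 1 = c_v$, so $\widetilde{M}_v = \mathfrak{A}_v\cdot c_v$ for every $v\in M_F$. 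First I would check that $c = (c_v)$ indeed lies in $J(FG)$: since $M$ is a locally free $\mathfrak{A}$-lattice, for almost all $v$ the localization $\widetilde{M}_v$ coincides with the ``standard'' lattice $\mathfrak{A}_v$ inside $F_vG$ (because $\widetilde{M}$ and $\mathfrak{A}$ agree after tensoring with $F$ and a lattice is determined by its localizations, which are equal at all but finitely many $v$), hence $c_v\in\mathfrak{A}_v^{\times}$ for almost all $v$, giving $c\in J(FG)$.

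Next I would invoke the local-global principle for lattices: an $\mathfrak{A}$-lattice $N$ in $FG$ is determined by its localizations $N_v\subseteq F_vG$, and in fact $N = \bigl(\bigcap_{v} N_v\bigr)\cap FG$ when the $N_v$ are interpreted inside the common ambient space. Applying this to $N = \widetilde{M}$ and using $\widetilde{M}_v = \mathfrak{A}_v\cdot c_v$, we get
\[
\widetilde{M} = \left(\bigcap_{v\in M_F}\mathfrak{A}_v\cdot c_v\right)\cap FG = \mathfrak{A}\cdot c,
\]
where the last equality is the definition of $\mathfrak{A}\cdot c$ recorded just before Theorem~\ref{thm:3.3}. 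Since $\widetilde{M}\simeq M$ as $\mathfrak{A}$-modules, we conclude $\mathrm{cl}(M) = \mathrm{cl}(\mathfrak{A}\cdot c) = j_{\mathfrak{A}}(c)$ by the definition of $j_{\mathfrak{A}}$ in Theorem~\ref{thm:3.3}.

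The main obstacle I anticipate is purely bookkeeping rather than conceptual: one must be careful that the ambient embeddings are compatible, i.e. that the diagram relating $F\otimes_{\mathcal{O}_F}M\xrightarrow{\sim}FG$, its localizations $F_v\otimes_{\mathcal{O}_F}M\xrightarrow{\sim}F_vG$, and the inclusions $FG\hookrightarrow F_vG$ all commute, so that ``$a_v = c_v\cdot b$'' really does translate into ``$\widetilde{M}_v = \mathfrak{A}_v\cdot c_v$ inside $F_vG$'' with the same $c_v$. Once these identifications are pinned down, verifying $c\in J(FG)$ and applying the local-global description of lattices is routine. I would not belabor the latter steps; the whole argument is essentially a translation between the module-theoretic and idèlic descriptions of $\mathrm{Cl}(\mathfrak{A})$ afforded by Theorem~\ref{thm:3.3}.
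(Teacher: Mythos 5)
Your proposal is correct and follows essentially the same route as the paper: the paper likewise deduces $c\in J(FG)$ from the fact that $M$ and $\mathfrak{A}\cdot b$ are two lattices in $F\otimes_{\mathcal{O}_F}M$ and hence agree at almost all $v$, and then observes that the isomorphism $\gamma\cdot b\mapsto\gamma$ carries $M$ onto $\mathfrak{A}\cdot c$, which is exactly your local--global identification of $\widetilde{M}$ with $\bigl(\bigcap_v\mathfrak{A}_v\cdot c_v\bigr)\cap FG$. No issues.
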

\begin{proof}Since $M$ and $\mathfrak{A}\cdot b$ are both $\mathfrak{A}$-lattices in $F\otimes_{\mathcal{O}_F}M$, we have $M_v=\mathfrak{A}_v\cdot b$ for all but finitely many $v\in M_F$, which implies that $c\in J(FG)$. Moreover, we have $j_\mathfrak{A}(c)=\mbox{cl}(M)$ because the $FG$-module isomoprhism
\[
K\otimes_{\mathcal{O}_F}M\longrightarrow FG\cdot b;\hspace{1em}\gamma\cdot b\mapsto\gamma
\]
restricts to an isomorphism $1\otimes_{\mathcal{O}_F}M\longrightarrow \mathfrak{A}\cdot c$ of $\mathfrak{A}$-modules.
\end{proof}

\begin{remark}\label{rem:3.5}For simplicity, we will write $j=j_F$ for $j_\mathfrak{A}$ when $\mathfrak{A}=\mathcal{O}_FG$. Given a weakly ramified $h\in\mbox{Hom}(\Omega_F,G)$, the Normal Basis Theorem implies that
\[
F_h=FG\cdot b\hspace{1cm}\mbox{for some }b\in F_h.
\]
Moreover, since $A_h$ is locally free over $\mathcal{O}_FG$, for each $v\in M_F$ we have
\[
A_{h_v}=\mathcal{O}_{F_v}G\cdot a_v\hspace{1cm}\mbox{for some }a_v\in A_{h_v}
\]
(c.f. Remark~\ref{rem:2.4}). Let $c\in J(FG)$ be as in Proposition~\ref{prop:3.4}, so $j(c)=\mbox{cl}(A_h)$. Observe that for each $v\in M_F$, since $\textbf{r}_G$ is an isomorphism of $F_v^cG$-modules, the equality $a_v=c_v\cdot b$ is equivalent to
\[
\textbf{r}_G(a_v)=c_v\cdot\textbf{r}_G(b).
\]
Such $\textbf{r}_G(b)$ is already described by Proposition~\ref{prop:2.6}. In this rest of this paper, we give explicit descriptions of $\textbf{r}_G(a_v)$ for $h_v$ satisfying suitable hypotheses.
\end{remark}


\section{Properties of Local Resolvends}\label{s:4}

Let $F$ be a finite extension of $\mathbb{Q}_p$. We prove two fundamental properties of resolvends $\textbf{r}_G(a)$ for any $a$ satisfying $A_h=\mathcal{O}_FG\cdot a$ for some weakly ramified $h\in\mbox{Hom}(\Omega_F,G)$. Recall that such $\textbf{r}_G(a)$ is invertible by Propostion~\ref{prop:2.6} and that $\textbf{r}_G$ is an isomorphism of $F^cG$-modules.

\begin{prop}\label{prop:4.1}Let $h\in\mbox{Hom}(\Omega_F,G)$ be weakly ramified. Then $h^{-1}$ is also weakly ramified. Moreover, assume that $A_h=\mathcal{O}_FG\cdot a$ and
\[
\textbf{r}_G(a')=\textbf{r}_G(a)^{-1}.
\]
Then $a'\in F_{h^{-1}}$ and  $A_{h^{-1}}=\mathcal{O}_FG\cdot a'$.
\end{prop}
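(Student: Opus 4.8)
The plan is to prove the two assertions separately, as they are essentially independent. For the statement that $h^{-1}$ is weakly ramified: since $F^{h^{-1}} = (F^c)^{\ker(h^{-1})}$ and $\ker(h^{-1}) = \ker(h)$ (the map $\omega \mapsto h(\omega)^{-1}$ has the same kernel as $h$ because inversion is a bijection on $G$), we get $F^{h^{-1}} = F^h$ as subfields of $F^c$. Hence the ramification filtration of $F^{h^{-1}}/F$ coincides with that of $F^h/F$, and in particular the second ramification group of $h^{-1}$ is trivial iff that of $h$ is. So $h^{-1}$ is weakly ramified, and moreover $A_{h^{-1}}$ and $A_h$ are ``the same'' square root of the inverse different sitting inside the same field extension — only the $G$-action on $F_{h^{-1}}$ versus $F_h$ differs.

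For the second assertion, I would first check that $a' \in F_{h^{-1}}$. By the characterization \eqref{eq:2.1}, this amounts to verifying $\omega \cdot \textbf{r}_G(a') = \textbf{r}_G(a') h^{-1}(\omega)$ for all $\omega \in \Omega_F$. Starting from $a \in F_h$ we have $\omega \cdot \textbf{r}_G(a) = \textbf{r}_G(a) h(\omega)$; applying $\omega$ to the identity $\textbf{r}_G(a)\textbf{r}_G(a)^{-1} = 1$ and rearranging gives $\omega \cdot \textbf{r}_G(a)^{-1} = h(\omega)^{-1} \textbf{r}_G(a)^{-1}$. Since $G$ is abelian, $h(\omega)^{-1}\textbf{r}_G(a)^{-1} = \textbf{r}_G(a)^{-1} h(\omega)^{-1} = \textbf{r}_G(a')h^{-1}(\omega)$, which is exactly the required relation; here the commutativity of $G$ is used crucially. (Note $\textbf{r}_G(a)$ is invertible by Proposition~\ref{prop:2.6} since $A_h = \mathcal{O}_FG\cdot a$ forces $F_h = FG\cdot a$.)

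It remains to show $A_{h^{-1}} = \mathcal{O}_FG\cdot a'$. By Corollary~\ref{cor:2.9} applied to $h^{-1}$, it suffices to show that $a' \in A_{h^{-1}}$ and that $\textbf{r}_G(a')\textbf{r}_G(a')^{[-1]} \in (\mathcal{O}_FG)^\times$. For the unit condition: since $A_h = \mathcal{O}_FG\cdot a$, Corollary~\ref{cor:2.9} gives $\textbf{r}_G(a)\textbf{r}_G(a)^{[-1]} \in (\mathcal{O}_FG)^\times$, and applying the involution $[-1]$ (which is a ring automorphism of $\mathcal{O}_FG$, hence preserves units) and inverting, one gets $\textbf{r}_G(a)^{-1}(\textbf{r}_G(a)^{[-1]})^{-1} \in (\mathcal{O}_FG)^\times$; since $[-1]$ commutes with inversion this equals $\textbf{r}_G(a)^{-1}(\textbf{r}_G(a)^{-1})^{[-1]} = \textbf{r}_G(a')\textbf{r}_G(a')^{[-1]}$. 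For $a' \in A_{h^{-1}}$: identifying $F_{h^{-1}}$ and $F_h$ with the same underlying $F$-algebra $F^h$ as above, $A_{h^{-1}} = A_h$ as fractional ideals of $F^h$, so I need $a' \in A_h$. This is the step I expect to be the main obstacle. The idea is that $A_h$ is self-dual with respect to $Tr$, and by \eqref{eq:2.2} the relation $\textbf{r}_G(a')=\textbf{r}_G(a)^{-1} = \textbf{r}_G(b)$ where $b$ is the $Tr$-dual generator to $a$ (from the proof of Proposition~\ref{prop:2.8}) — but since $\mathcal{O}_FG\cdot a$ is self-dual (being equal to $A_h = A_h^*$), its dual generator $b$ lies in $\mathcal{O}_FG\cdot a \subset A_h$, and $a' = b$ by injectivity of $\textbf{r}_G$. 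Hence $a' \in A_h = A_{h^{-1}}$, completing the proof.
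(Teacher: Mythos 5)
Your overall route is the paper's: reduce everything to Corollary~\ref{cor:2.9} by checking (i) that $\textbf{r}_G(a')\textbf{r}_G(a')^{[-1]}\in(\mathcal{O}_FG)^\times$ and (ii) that $a'\in A_{h^{-1}}$, with (ii) resting on the self-duality of $A_h=\mathcal{O}_FG\cdot a$; your verifications of $a'\in F_{h^{-1}}$, of $\ker(h^{-1})=\ker(h)$, and of the unit condition (i) are all correct. The one error is in the final step. From \eqref{eq:2.2}, the dual generator $b$ satisfies $\textbf{r}_G(a)\textbf{r}_G(b)^{[-1]}=1$, so $\textbf{r}_G(a)^{-1}=\textbf{r}_G(b)^{[-1]}$, \emph{not} $\textbf{r}_G(b)$; hence $a'(s)=b(s^{-1})$ rather than $a'=b$. (Indeed $a'=b$ cannot hold in general: $b\in F_h$, whereas you have already shown $a'\in F_{h^{-1}}$, and these are distinct subsets of $\mbox{Map}(G,F^c)$ unless $h$ is trivial.) The conclusion survives with the corrected identity: $b\in(\mathcal{O}_FG\cdot a)^*=A_h^*=A_h$ takes all its values in $A^h=A^{h^{-1}}$, and $a'$ has the same set of values as $b$, so $a'$, already known to be $\Omega_F$-equivariant for the $h^{-1}$-twisted action, lies in $A_{h^{-1}}=\mbox{Map}_{\Omega_F}(^{h^{-1}}G,A^{h^{-1}})$. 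This is precisely the paper's argument, phrased there as $\textbf{r}_G(a')=\gamma\,\textbf{r}_G(a)^{[-1]}$ for some $\gamma\in(\mathcal{O}_FG)^\times$, so after this one-line repair your proof matches the paper's.
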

\begin{proof}The fact that $a'\in F_{h^{-1}}$ follows from (\ref{eq:2.1}). Furthermore, since $G$ has odd order, we have $\ker(h)=\ker(h^{-1})$ so $A^h=A^{h^{-1}}$. It is then clear that $h^{-1}$ is weakly ramified if $h$ is. Moreover, Corollary~\ref{cor:2.9} implies that
\[
\textbf{r}_G(a')=\gamma\textbf{r}_G(a)^{[-1]}\hspace{1cm}\mbox{for some }\gamma\in(\mathcal{O}_FG)^\times
\]
Since $A^{h}=A^{h^{-1}}$, the above implies that $a'\in A_{h^{-1}}$. Since
\[
\textbf{r}_G(a')\textbf{r}_G(a')^{[-1]}
=\gamma\gamma^{[-1]}\textbf{r}_G(a)^{[-1]}\textbf{r}_G(a)\in(\mathcal{O}_FG)^\times,
\]
we deduce from Corollary~\ref{cor:2.9} that $A_{h^{-1}}=\mathcal{O}_FG\cdot a'$, as desired.
\end{proof}

\begin{lem}\label{lem:4.2}For a finite Galois extension $E/F$, let $G(E/F)_n$ denote the $n$-th ramification group of $E/F$. Let $U/F$ and $L/F$ be two finite Galois extensions with $U/F$ unramified.
\begin{enumerate}[(a)]
\item The homomorphism
\[
\mbox{Gal}(UL/F)\longrightarrow\mbox{Gal}(L/F);
\hspace{1em}\tau\mapsto\tau|_L
\]
induces an isomorphism $G(UL/F)_n\simeq G(L/F)_n$ for all $n\geq 0$.
\item If $L/F$ is weakly ramified and $M/F$ is a Galois subextension of $L/F$, then the extension $M/F$ is also weakly ramified.
\item If $L/F$ is abelian and $e_0:=|G(L/F)_0/G(L/F)_1|$, then
\[
G(L/F)_n=G(L/F)_{n+1}\hspace{1cm}\mbox{for all $n$ not divisible by $e_0$}.
\]
In particular, if $L/F$ is wildly and weakly ramified in addition, then
\[
G(L/F)_0=G(L/F)_1.
\]
\end{enumerate}
\end{lem}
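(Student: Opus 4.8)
The plan is to handle the three parts largely independently, invoking standard ramification-theory facts (Herbrand's theorem, behaviour of ramification groups in towers) throughout. For part (a), I would work with the classical description of ramification groups in the lower numbering. Since $U/F$ is unramified, the restriction map $\mathrm{Gal}(UL/F) \to \mathrm{Gal}(L/F)$ is surjective with kernel $\mathrm{Gal}(UL/L)$, which is unramified over $L$; hence the ramification index of $UL/F$ equals that of $L/F$, and more precisely $G(UL/F)_0 = G(UL/F)_{-1} \cap (\text{inertia})$ injects into $\mathrm{Gal}(L/F)$. The cleanest route is: let $P$ be the prime of $UL$, $p$ the prime of $L$ below it; for $\tau \in G(UL/F)_0$ one has $v_{UL}(\tau x - x) \geq n+1$ for all $x \in \mathcal{O}_{UL}$ iff the same holds for $\tau|_L$ on $\mathcal{O}_L$, because $\mathcal{O}_{UL} = \mathcal{O}_U \otimes_{\mathcal{O}_F} \mathcal{O}_L$ (unramified base change is étale, so a uniformizer of $L$ is still a uniformizer of $UL$ and $\mathcal{O}_{UL}$ is generated over $\mathcal{O}_U$ by $\mathcal{O}_L$), and $\tau$ acts trivially on $\mathcal{O}_U$. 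This gives $G(UL/F)_n \simeq G(L/F)_n$ compatibly for all $n \geq 0$; the injectivity on $G(UL/F)_0$ already forces the map to be an isomorphism onto $G(L/F)_0$, and then the filtration statement follows.

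For part (b), let $N = \mathrm{Gal}(L/M) \trianglelefteq \mathrm{Gal}(L/F)$, so $\mathrm{Gal}(M/F) = \mathrm{Gal}(L/F)/N$. The ramification groups of the quotient are computed in the \emph{upper} numbering by Herbrand's theorem: $G(M/F)^v = G(L/F)^v N / N$. Weak ramification of $L/F$ means $G(L/F)_2 = 1$, equivalently (since $G_1 = G^1$ and $G_1 \supseteq G_2$, and using that $G_0/G_1$ has order prime to $p$ while $G_1$ is a $p$-group) that the break of the upper-numbering filtration is at most $1$, i.e. $G(L/F)^v = 1$ for $v > 1$. Then $G(M/F)^v = 1$ for $v > 1$ as well, and translating back via Herbrand to the lower numbering gives $G(M/F)_2 = 1$ — one must check the jump of the Herbrand function $\psi_{M/F}$ sends the integer break correctly, but since the only break of $M/F$ is at an upper-numbering value $\leq 1$ and $\psi$ is the identity on $[0,1]$ (as $G^0/G^1$ corresponds to the first segment of slope $1$... more precisely $\psi_{M/F}(u) = u$ for $u \le 1$ because $\varphi'$ there is $1/[G_0:G_1]$-free — this needs a line of care), the lower break is also $\leq 1$, so $G(M/F)_2 = 1$.

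For part (c), $L/F$ abelian is the key hypothesis, so Hasse–Arf applies: the jumps in the \emph{upper} numbering filtration $G(L/F)^v$ occur at integers. Now the standard relation between jumps in the two numberings is: if $\psi = \psi_{L/F}$ is the inverse of the Herbrand function, then $G_n = G_{n+1}$ precisely when $n$ is not of the form $\psi(v)$ for an upper-numbering jump $v$; and $\psi(v)$ for integral $v$ is determined inductively by $\psi(v) - \psi(v-1) = |G^{v}| = |G_{\psi(v)}|$ (for $v \geq 1$), with $\psi(0) = 0$. Since $e_0 = |G_0/G_1| = [G^0 : G^1]$ and the first jump after $0$ is at $v=1$ with $\psi(1) = e_0$, and inductively all the lower-numbering jumps $\psi(v)$ are multiples of $e_0$ (as each increment $\psi(v)-\psi(v-1) = |G_{\psi(v)}|$ is a power of $p$ dividing... wait, one needs $e_0 \mid |G_1|$-increments, which holds because $G_{\psi(v)} \subseteq G_1$ and... ) — the cleanest formulation is: $\psi(v) \equiv 0 \pmod{e_0}$ for every integer $v \geq 0$, proved by induction using that $\psi(v) - \psi(v-1) = |G^v|$ divides $|G^1| = |G_1|$ and $e_0 \mid$ ... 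I would instead cite that $\psi(v) = e_0 v'$-type formulas hold, or argue directly from $\varphi(u) = \frac{1}{e_0}\int_0^u |G_t|\,dt / |G_1| \cdot e_0$-normalizations. So for $n$ not divisible by $e_0$, $n$ is never equal to any $\psi(v)$ with $v$ a jump, hence $G_n = G_{n+1}$. The ``in particular'' is immediate: if $L/F$ is wild then $G_1 \neq 1$, so $1$ is a jump of the lower filtration, forcing (by what we just showed) $1 \equiv 0 \pmod{e_0}$ i.e. $e_0 = 1$, and then weak ramification gives $G_1 = G_2 = \cdots$, but actually we want $G_0 = G_1$: since $e_0 = 1$ means $G_0 = G_1$ directly by definition of $e_0$.

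The main obstacle I anticipate is part (c): getting the divisibility ``every lower-numbering jump is a multiple of $e_0$'' cleanly requires either a careful induction on the Herbrand function or a clean citation (e.g. to Serre, \emph{Local Fields}, Ch.\ IV–V, combining Hasse–Arf with the explicit form of $\psi_{L/F}$). Parts (a) and (b) are routine once one sets up the unramified-base-change identity and invokes Herbrand's theorem respectively, so I would keep those short and spend the real effort making the arithmetic of part (c) airtight.
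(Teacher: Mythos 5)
The paper does not actually prove this lemma: it cites \cite[Proposition 2.2]{V} for (a) and (b) and \cite[Ch.\ IV \S 2, Prop.\ 9, Cor.]{S} for (c). So any written-out argument is necessarily ``a different route,'' and your part (a) is fine: unramified base change preserves uniformizers and $\mathcal{O}_{UL}=\mathcal{O}_U\cdot\mathcal{O}_L$, elements of the inertia group of $UL/F$ fix $\mathcal{O}_U$, and the valuation criterion defining $G(UL/F)_n$ transfers to $L$. (Minor point: the restriction $\mathrm{Gal}(UL/F)\to\mathrm{Gal}(L/F)$ is surjective only onto $\mathrm{Gal}(L/(U\cap L))$, but since $U\cap L/F$ is unramified this subgroup contains $G(L/F)_0$, so nothing is lost.)

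Part (b) as written has a genuine gap. You reduce to the claim that ``weakly ramified'' is equivalent to ``$G^v=1$ for all $v>1$,'' but only the forward implication is true in general. For the quotient $\bar G=\mathrm{Gal}(M/F)$ you need the converse, and it fails: if $\bar e_0=[\bar G_0:\bar G_1]\geq 3$ and $\bar G_1=\bar G_2\neq 1$, $\bar G_3=1$, then $\varphi_{M/F}(2)=2/\bar e_0<1$, so every upper-numbering break of $\bar G$ is $<1$ even though $\bar G_2\neq 1$. Your attempted patch, ``$\psi_{M/F}(u)=u$ for $u\leq 1$,'' is also false unless $\bar e_0=1$: on $[0,\varphi_{M/F}(1)]$ the function $\psi_{M/F}$ has slope $\bar e_0$, not $1$. (One could rescue your route for \emph{abelian} $M/F$ by combining Hasse--Arf with part (c), but the statement of (b) carries no abelian hypothesis.) The correct and much shorter argument uses Herbrand's theorem in its lower-numbering form: with $H=\mathrm{Gal}(L/M)$ one has $(G/H)_{\varphi_{L/M}(u)}=G_uH/H$, i.e.\ $(G/H)_v=G_{\psi_{L/M}(v)}H/H$, and since $\psi_{L/M}(v)\geq v$ for $v\geq 0$ this gives $(G/H)_2=G_{\psi_{L/M}(2)}H/H\subseteq G_2H/H=1$. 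No upper numbering, no Hasse--Arf.

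Part (c): your Hasse--Arf route can be completed (if $n_1<n_2<\cdots$ are the lower jumps $\geq 1$, then $\varphi(n_1)=n_1/e_0\in\mathbb{Z}$ forces $e_0\mid n_1$, and inductively $\varphi(n_{k+1})-\varphi(n_k)=(n_{k+1}-n_k)\,|G_{n_k+1}|/|G_0|\in\mathbb{Z}$ forces $e_0\mid(n_{k+1}-n_k)$), but as written your induction visibly trails off, and this is the heavy way around: the result you are reproving is exactly \cite[Ch.\ IV \S 2, Prop.\ 9]{S} and its corollary, whose proof is elementary and does not use Hasse--Arf. Namely, conjugation induces an action of $G_0/G_1$ on $G_n/G_{n+1}$ satisfying $\theta_n(sts^{-1})=\theta_0(s)^n\theta_n(t)$; if $G$ is abelian the left side is $\theta_n(t)$, so $G_n\neq G_{n+1}$ forces $\theta_0(s)^n=1$ for all $s$, i.e.\ $e_0\mid n$ since $\theta_0$ embeds $G_0/G_1$ into $k^\times$. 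Your deduction of the ``in particular'' clause is correct: wild plus weak ramification makes $n=1$ a jump, so $e_0=1$ and $G_0=G_1$.
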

\begin{proof}See \cite[Proposition 2.2]{V} for (a) and (b); the proof there is valid even if $F\neq\mathbb{Q}_p$. See \cite[Chapter IV Proposition 9 Corollary 2]{S} for (c).
\end{proof}

\begin{lem}\label{lem:4.3}Let $h,h_1,h_2\in\mbox{Hom}(\Omega_F,G)$ be such that
\begin{enumerate}[(i)]
\item $h=h_1h_2$;
\item $h_1$ is unramified.
\end{enumerate}
Then $F^h/F$ has the same ramification as $F^{h_2}/F$. Assume in addition that
\begin{enumerate}[(i)]\setcounter{enumi}{2}
\item $h_2$ is weakly ramified.
\end{enumerate}
Then $F^h/F$ is also weakly ramified and
\[
v_{F^h}(A^h)=v_{F^{h_2}}(A^{h_2}).
\]
\end{lem}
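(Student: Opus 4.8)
The plan is to reduce the statement about the algebras $F^{h_1}, F^{h_2}, F^h$ to a statement about the compositum of number-field-type local extensions, and then invoke Lemma~\ref{lem:4.2}. First I would observe that since $h = h_1 h_2$ with $G$ abelian, we have $\ker(h) \supseteq \ker(h_1) \cap \ker(h_2)$, so $F^h \subseteq F^{h_1} F^{h_2}$. Set $E := F^{h_1} F^{h_2}$; this is a finite abelian extension of $F$, and since $h_1$ is unramified, $F^{h_1}/F$ is unramified, hence $E/F^{h_2}$ is unramified (it is obtained from $F^{h_2}$ by composing with the unramified extension $F^{h_1}$). Thus $E = F^{h_1} F^{h_2}$ plays the role of $UL$ in Lemma~\ref{lem:4.2}(a) with $U = F^{h_1}$ and $L = F^{h_2}$, giving $G(E/F)_n \simeq G(F^{h_2}/F)_n$ for all $n \geq 0$ via restriction. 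In particular $E/F$ and $F^{h_2}/F$ have the same ramification.

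Next I would locate $F^h$ inside this picture. The key point is that $F^h/F$ is a Galois subextension of $E/F$, and $E/F^h$ is unramified: indeed, $E = F^{h_1} F^h$ because $h_1 h = h_1^2 h_2$ has the same kernel considerations... more carefully, since $h_2 = h_1^{-1} h$ and $h_1$ is unramified, $F^{h_2} \subseteq F^{h_1} F^h$, so $E = F^{h_1} F^{h_2} \subseteq F^{h_1} F^h \subseteq E$, forcing $E = F^{h_1} F^h$; then $E/F^h$ is unramified as the composite of $F^h$ with the unramified extension $F^{h_1}$. Now apply Lemma~\ref{lem:4.2}(a) again, this time with $U = F^{h_1}$, $L = F^h$, $UL = E$: restriction induces $G(E/F)_n \simeq G(F^h/F)_n$ for all $n \geq 0$. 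Combining the two isomorphisms $G(F^h/F)_n \simeq G(E/F)_n \simeq G(F^{h_2}/F)_n$ shows $F^h/F$ and $F^{h_2}/F$ have the same ramification groups at every level, which establishes the first assertion of the lemma and, under hypothesis (iii), that $F^h/F$ is weakly ramified (its second ramification group is trivial because that of $F^{h_2}/F$ is).

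Finally, for the statement about $A^h$, I would use Proposition~\ref{prop:1.1}: since $\mathfrak{D}_{F^h/F}$ has $v_{F^h}$-valuation $\sum_{n \geq 0}(|G(F^h/F)_n| - 1)$ and likewise for $F^{h_2}/F$, the equality of all ramification groups gives $v_{F^h}(\mathfrak{D}_{F^h/F}) = v_{F^{h_2}}(\mathfrak{D}_{F^{h_2}/F})$; halving (legitimate since $G$ has odd order, so both differents are squares) yields $v_{F^h}(A^h) = v_{F^{h_2}}(A^{h_2})$. One small caveat: here $v_{F^h}$ and $v_{F^{h_2}}$ are normalized valuations on possibly different fields, but since $F^h/F$ and $F^{h_2}/F$ have the same ramification index over $F$ (the order of the $0$-th ramification group agrees), the normalizations are compatible and the numerical equality is exactly what is claimed. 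The main obstacle — or rather the step requiring the most care — is the bookkeeping in identifying $E = F^{h_1}F^{h_2} = F^{h_1}F^h$ and verifying the unramifiedness of $E/F^h$, since $F^h$ need not contain $F^{h_2}$; everything else is a direct application of Lemmas~\ref{lem:4.2} and Proposition~\ref{prop:1.1}.
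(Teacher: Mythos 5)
Your proof is correct, and the field-theoretic setup is the same as the paper's: both identify the compositum $E:=F^{h_1}F^{h_2}=F^{h_1}F^{h}$ via the kernel identity $\ker(h_1)\cap\ker(h_2)=\ker(h_1)\cap\ker(h)$ and observe that $E$ is unramified over both $F^{h_2}$ and $F^h$. Where you diverge is in the finish. The paper applies Lemma~\ref{lem:4.2}(a) only to the pair $(F^{h_1},F^{h_2})$, deduces weak ramification of $F^h/F$ from Lemma~\ref{lem:4.2}(b) (Galois subextension of the weakly ramified $E/F$), and then must compute $v_{F^h}(A^h)$ by a case split -- tame versus wild -- invoking Lemma~\ref{lem:4.2}(c) to identify $G_0=G_1$ in the wild case. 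You instead apply Lemma~\ref{lem:4.2}(a) a second time, to the pair $(F^{h_1},F^{h})$, which is legitimate since $F^h/F$ is Galois ($\ker h$ is normal in $\Omega_F$) and $F^{h_1}F^h=E$. This yields the stronger statement that $G(F^h/F)_n$ and $G(F^{h_2}/F)_n$ have the same order for every $n\geq 0$, after which weak ramification and the equality of $v_{F^h}(A^h)$ with $v_{F^{h_2}}(A^{h_2})$ both drop out of Proposition~\ref{prop:1.1} with no case analysis and no appeal to parts (b) or (c) of Lemma~\ref{lem:4.2}. Your route is cleaner and slightly more informative (it matches the entire ramification filtration, not just the conclusion); the paper's route uses less of Lemma~\ref{lem:4.2}(a) at the cost of the extra bookkeeping. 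Your closing caveat about normalizations is harmless but unnecessary: Proposition~\ref{prop:1.1} expresses each valuation intrinsically in terms of the orders of the respective ramification groups, so equality of those orders is all that is needed.
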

\begin{proof}Let $e$ be the ramification index of $F^{h_2}/F$. First, we show that we have the following diagram, where the numbers indicate the ramification indices.
\begin{equation}\label{eq:4.1}
\begin{tikzpicture}[baseline=(current bounding box.center)]
\node at (0,-2) [name=F] {$F$};
\node at (-2,0) [name=F1] {$F^{h_1}$};
\node at (0,0) [name=Fh] {$F^h$};
\node at (2,0) [name=F2] {$F^{h_2}$};
\node at (0,2) [name=L] {$F^{h_1}F^{h_2}$};
\path[font=\small]
(F) edge node[auto] {$1$} (F1)
(F) edge node[auto] {$e$} (Fh)
(F2) edge node[auto] {$e$} (F)
(F1) edge node[auto] {$e$} (L)
(Fh) edge node[auto] {$1$} (L)
(L) edge node[auto] {$1$} (F2);
\end{tikzpicture}
\end{equation}
Hypothesis (i) implies that $\ker(h_1)\cap\ker(h_2)=\ker(h_1)\cap\ker(h)$, and so
\[
F^{h_1}F^{h_2}
=(F^c)^{\ker(h_1)\cap\ker(h_2)}
=(F^c)^{\ker(h_1)\cap\ker(h)}
=F^{h_1}F^{h}.
\]
The extensions $F^{h_1h_2}/F^{h_2}$ and $F^{h_1}F^h/F^h$ are unramified because $F^{h_1}/F$ is unramified by hypothesis (ii). It is then clear that the ramification indices of $F^h/F$ and $F^{h_1h_2}/F^{h_1}$ are both be equal to $e$. This proves the first claim.

Next assume in addition that $F^{h_2}/F$ is weakly ramified. Since $F^{h_1}/F$ is unramified, it follows from Lemma~\ref{lem:4.2} (a) that $F^{h_1}F^{h_2}/F$ is also weakly ramified, and hence from Lemma~\ref{lem:4.2} (b) that $F^h/F$ is weakly ramified. Moreover, using the same notation as in Lemma~\ref{lem:4.2}, we know from Proposition~\ref{prop:1.1} that
\begin{align*}
v_{F^h}(A^h)&=(G(F^h/F)_0+G(F^h/F)_1-2)/2;\\
v_{F^{h_2}}(A^{h_2})&=(G(F^{h_2}/F)_0+G(F^{h_2}/F)_1-2)/2.
\end{align*}
If $(e,p)=1$, then $F^h/F$ and $F^{h_2}/F$ are both tame and
\[
G(F^h/F)_1=1=G(F^{h_2}/F)_1.
\]
If $(e,p)>1$, then $F^h/F$ and $F^{h_2}$ are both wildly and weakly ramified, and Lemma~\ref{lem:4.2} (c) yields 
\begin{align*}
G(F^h/F)_0&=G(F^h/F)_1;\\
G(F^{h_2}/F)_0&=G(F^{h_2}/F)_1.
\end{align*}
Since $G(F^h/F)_0$ and $G(F^{h_2}/F)_0$ both have order $e$, in either case we have that $v_{F^h}(A^h)=v_{F^{h_2}}(A^{h_2})$, as desired.
\end{proof}

\begin{prop}\label{prop:4.4}Let $h_1,h_2\in\mbox{Hom}(\Omega_F,G)$ with $h_1$ unramified and $h_2$ weakly ramified. Write $h:=h_1h_2$. Then $h$ is also weakly ramified. Moreover, assume that $A_{h_i}=FG\cdot a_i$ for $i=1,2$ and
\[
\mathbf{r}_{G}(a)=\mathbf{r}_{G}(a_1)\mathbf{r}_{G}(a_2).
\]
Then $a\in F_h$ and $A_h=\mathcal{O}_FG\cdot a$.
\end{prop}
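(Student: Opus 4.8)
The plan is to mimic the proof of Proposition~\ref{prop:4.1}, using Lemma~\ref{lem:4.3} in place of the elementary observations about $\ker(h)$ there. First I would invoke Lemma~\ref{lem:4.3}: since $h_1$ is unramified and $h_2$ is weakly ramified, it gives that $h=h_1h_2$ is weakly ramified and that $v_{F^h}(A^h)=v_{F^{h_2}}(A^{h_2})$; moreover $F^h/F$ and $F^{h_2}/F$ have the same ramification index $e$. The membership $a\in F_h$ is immediate from the transformation law \eqref{eq:2.1}: both $\mathbf{r}_G(a_1)$ and $\mathbf{r}_G(a_2)$ satisfy $\omega\cdot\mathbf{r}_G(a_i)=\mathbf{r}_G(a_i)h_i(\omega)$, and since $h_1(\Omega_F)$ centralizes $h_2(\Omega_F)$ (as $G$ is abelian) the product $\mathbf{r}_G(a)=\mathbf{r}_G(a_1)\mathbf{r}_G(a_2)$ satisfies the same relation with $h=h_1h_2$, so $a\in F_h$.

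The substantive step is to show $a\in A_h$, i.e. that $a$ lies in the square root of the inverse different, and then to apply Corollary~\ref{cor:2.9}. The plan is to work locally over $F^c$, or rather over the compositum $N:=F^{h_1}F^{h_2}$, and compare valuations. By Corollary~\ref{cor:2.10}, since $h_1$ is unramified we may (after replacing $a_1$ by a suitable choice, which is harmless as it only changes $\mathbf{r}_G(a_1)$ by a unit in $(\mathcal{O}_{F^c}G)^\times$) assume $\mathbf{r}_G(a_1)\in(\mathcal{O}_{F^c}G)^\times$. Then $\mathbf{r}_G(a)=\mathbf{r}_G(a_1)\mathbf{r}_G(a_2)$ differs from $\mathbf{r}_G(a_2)$ by a unit in $(\mathcal{O}_{F^c}G)^\times$. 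Using the diagram \eqref{eq:4.1} and the equality $v_{F^h}(A^h)=v_{F^{h_2}}(A^{h_2})$ together with the fact that $F^{h_1}F^h/F^h$ and $F^{h_1}F^{h_2}/F^{h_2}$ are unramified (so $A^h$ and $A^{h_2}$ generate the same fractional ideal of $N$ under the inclusions into $N$), I would conclude that $a$ and $a_2$ generate the same $\mathcal{O}_{F^c}G$-submodule of $F^cG$ under $\mathbf{r}_G$, hence $a\in A_h$ because $a_2\in A_{h_2}$ and the two square roots of the inverse different "match up" after the unramified base change.

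With $a\in A_h$ established, the final step is the determinant/unit computation: from $\mathbf{r}_G(a)=\mathbf{r}_G(a_1)\mathbf{r}_G(a_2)$ and the fact that the involution $[-1]$ commutes with multiplication up to reordering (and $F^cG$ is commutative here since $G$ is abelian), we get
\[
\mathbf{r}_G(a)\mathbf{r}_G(a)^{[-1]}=\bigl(\mathbf{r}_G(a_1)\mathbf{r}_G(a_1)^{[-1]}\bigr)\bigl(\mathbf{r}_G(a_2)\mathbf{r}_G(a_2)^{[-1]}\bigr).
\]
By Corollary~\ref{cor:2.9} applied to $a_1\in A_{h_1}$ and $a_2\in A_{h_2}$, both factors on the right lie in $(\mathcal{O}_FG)^\times$, hence so does the product, and Corollary~\ref{cor:2.9} applied in the other direction gives $A_h=\mathcal{O}_FG\cdot a$.

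I expect the main obstacle to be the middle step: making precise the claim that $a\in A_h$, i.e. correctly tracking how the square root of the inverse different behaves under the unramified twist and verifying at the level of the individual $\mathcal{O}_{F^c}$-lattices (not just their Galois-algebra avatars) that multiplying $\mathbf{r}_G(a_2)$ by the unit resolvend $\mathbf{r}_G(a_1)$ lands one in $A_h$ rather than merely in some $\mathcal{O}_{F^c}G$-translate of it. The valuation bookkeeping from Lemma~\ref{lem:4.3} and Proposition~\ref{prop:1.1} is exactly what is designed to close this gap, but care is needed to ensure the unramified extensions $F^{h_1}F^{h}/F^h$ really do preserve the relevant ideal, so that the ambient lattices genuinely coincide.
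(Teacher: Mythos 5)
Your proposal is correct and follows essentially the same route as the paper: weak ramification of $h$ and the equality $v_{F^h}(A^h)=v_{F^{h_2}}(A^{h_2})$ come from Lemma~\ref{lem:4.3}, the inclusion $a\in A_h$ comes from the integrality of $\mathbf{r}_G(a_1)$ (equivalently, the convolution formula for $a(s)$) together with the unramifiedness of $F^{h_1}F^{h_2}$ over both $F^{h_2}$ and $F^h$, and the generation statement comes from the multiplicativity of $\mathbf{r}_G(a)\mathbf{r}_G(a)^{[-1]}$ and Corollary~\ref{cor:2.9}. The only cosmetic difference is that no replacement of $a_1$ is needed: since $h_1$ is unramified and $A_{h_1}=\mathcal{O}_{h_1}=\mathcal{O}_FG\cdot a_1$, Corollary~\ref{cor:2.10} already gives $\mathbf{r}_G(a_1)\in(\mathcal{O}_{F^c}G)^{\times}$ for the given $a_1$.
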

\begin{proof}The fact that $a\in F_h$ follows from (\ref{eq:2.1}), and that $h$ is weakly ramified from Lemma~\ref{lem:4.3}. Moreover, Corollary~\ref{cor:2.9} implies that
\begin{equation}\label{eq:4.2}
\textbf{r}_G(a)\textbf{r}_G(a)^{[-1]}\in(\mathcal{O}_FG)^\times.
\end{equation}
A simple calculation then shows that
\[
a(s)=\sum_{rt=s}a_{1}(r)a_{2}(t)\hspace{1cm}\mbox{for all }s\in G.
\]
Note that $A_{h_1}=\mathcal{O}_{h_1}$ because $h_1$ is unramified. Since $a_1\in A_{h_1}$, we know that
\[
v_{F^{h_1}F^{h_2}}(a_{1}(r))\geq0\hspace{1cm}\mbox{for all }r\in G
\]
Similarly, because $a_2\in A^{h_2}$ and $F^{h_1}F^{h_2}/F^{h_2}$ is unramified from (\ref{eq:4.1}), we have
\[
v_{F^{h_1}F^{h_{2}}}(a_{2}(t))\geq v_{F^{h_2}}(A^{h_2})\hspace{1cm}\mbox{for all }t\in G.
\]
We then deduce that
\[
v_{F^{h_1}F^{h_2}}(a(s))\geq v_{F^{h_2}}(A^{h_2})\hspace{1cm}\mbox{for all }s\in G.
\]
But observe that $F^{h_1}F^{h_2}/F^h$ is unramified from (\ref{eq:4.1}) and $v_{F^{h_2}}(A^{h_2})=v_{F^h}(A^h)$ from Lemma~\ref{lem:4.3}, so the above implies that 
\[
v_{F^h}(a(s))\geq v_{F^h}(A^h)\hspace{1cm}\mbox{for all }s\in G.
\]
Since $A_h=\mbox{Map}_{\Omega_F}(^hG,A^h)$, this shows that $a\in A_h$. From Collorary~\ref{cor:2.9}, this together with (\ref{eq:4.2}) shows that $A_h=\mathcal{O}_FG\cdot a$, as desired.
\end{proof}

Proposition~\ref{prop:4.4} above will be used to prove that $\mbox{gal}_A$ is weakly multiplicative. It will also play an important role in the proofs of Theorems~\ref{thm:1.3} to~\ref{thm:1.5}.


\section{Proof of Theorem~\ref{thm:1.2}}\label{s:5}

\begin{proof}[Proof of Theorem~\ref{thm:1.2} (a)]
Let $h\in\mbox{Hom}(\Omega_K,G)$ be weakly ramified. The fact that $h^{-1}$ is also weakly ramified follows from Proposition~\ref{prop:4.1}. Let $K_h=KG\cdot b$ and $A_{h_v}=\mathcal{O}_{K_v}G\cdot a_v$ for $v\in M_K$ as in Remark~\ref{rem:3.5}. Moreover, let $c\in J(KG)$ be such that $a_v=c_v\cdot b$ for each $v\in M_K$, so $j(c)=\mbox{cl}(A_h)$.  Suppose that
\begin{align*}
\textbf{r}_G(b')&=\textbf{r}_G(b)^{-1};\\
\textbf{r}_G(a_v')&=\textbf{r}_G(a_v)^{-1}.
\end{align*}
Then $K_{h^{-1}}=KG\cdot b$ by (\ref{eq:2.1}) and Proposition~\ref{prop:2.6}, and $A_{h^{-1}_v}=\mathcal{O}_{K_v}G\cdot a_v'$ by Proposition~\ref{prop:4.1}. Note that for each $v\in M_K$, by definition we have
\[
\textbf{r}_G(a_v')=c_v^{-1}\textbf{r}_G(b'),
\]
which is equivalent to $a_v'=c_v^{-1}\cdot b$. It then follows from Proposition~\ref{prop:3.4} that  $j(c^{-1})=\mbox{cl}(A_{h^{-1}})$, as desired.
\end{proof}

\begin{proof}[Proof of Theorem~\ref{thm:1.2} (b)]
Let $h_1,h_2\in\mbox{Hom}(\Omega_K,G)$ be weakly ramified such that $d(h_1)\cap d(h_2)=\emptyset$. The fact that $h:=h_1h_2$ is also weakly ramified follows from Proposition~\ref{prop:4.1}. For $i=1,2$, let $K_{h_i}=KG\cdot b_i$ and $A_{h_{i,v}}=\mathcal{O}_{K_v}G\cdot a_{i,v}$ for $v\in M_K$ as in Remark~\ref{rem:3.5}. Moreover, let $c_i\in J(KG)$ be such that $a_{i,v}=c_{i,v}\cdot b$ for each $v\in M_K$, so $j(c_i)=\mbox{cl}(A_{h_i})$.  Suppose that
\begin{align*}
\textbf{r}_G(b)&=\textbf{r}_G(b_1)\textbf{r}_G(b_2);\\
\textbf{r}_G(a_v)&=\textbf{r}_G(a_{1,v})\textbf{r}_G(a_{2,v}).
\end{align*}
Then $K_{h}=KG\cdot b$ by (\ref{eq:2.1}) and Proposition~\ref{prop:2.6}, and $A_{h_v}=\mathcal{O}_{K_v}G\cdot a_v$ by Proposition~\ref{prop:4.4}. Note that for each $v\in M_K$, by definition we have
\[
\textbf{r}_G(a_v)=c_{1,v}c_{2,v}\textbf{r}_G(b),
\]
which is equivalently to $a_v=(c_{1,v}c_{2,v})\cdot b$. It then follows from Proposition~\ref{prop:3.4} that $j(c_1c_2)=\mbox{cl}(A_h)$, as desired.
\end{proof}


\section{Cohomology and Reduced Resolvends}\label{s:6}

Let $F$ be a number field or finite extension of $\mathbb{Q}_p$. Following \cite[Sections 1 and 2]{M}, we use cohomology to define reduced resolvends.

Recall that $\Omega_F$ acts trivially on $G$. Define
\[
\mathcal{H}(FG):=((F^{c}G)^{\times}/G)^{\Omega_F}
\]
and consider the short exact sequence
\begin{equation}\label{eq:6.1}
\begin{tikzcd}[column sep=1cm, row sep=1.5cm]
1 \arrow{r} &
G \arrow{r} &
(F^{c}G)^{\times} \arrow{r} &
(F^{c}G)^{\times}/G \arrow{r}&
1
\end{tikzcd}
\end{equation}
Taking $\Omega_F$-cohomology then yields the exact sequence
\[
\begin{tikzcd}[column sep=1cm, row sep=1.5cm]
1 \arrow{r} &
G \arrow{r} &
(F^{c}G)^{\times} \arrow{r} &
\mathcal{H}(FG) \arrow{r}{\delta}&
\mbox{Hom}(\Omega_F,G) \arrow{r}&
1,
\end{tikzcd}
\]
where $H^1(\Omega_F,(F^cG)^\times)=1$ by Hilbert's Theorem 90. Alternatively, for any $h\in\mbox{Hom}(\Omega_F,G)$ and $\textbf{r}_{G}(a)G\in\mathcal{H}(FG)$, we have $\delta(\textbf{r}_G(a))=h$ if and only if
\[
h(\omega)=\textbf{r}_G(a)^{-1}(\omega\cdot\textbf{r}_{G}(a))
\hspace{1cm}\mbox{for all }\omega\in\Omega_F,
\]
which is equivalent to $F_{h}=FG\cdot a$ by (\ref{eq:2.1}) and Proposition~\ref{prop:2.6}. Such an $a$ always exists by the Normal Basis Theorem. This shows that $\delta$ is surjective.

The argument above also shows that
\begin{equation}\label{eq:6.1'}
\mathcal{H}(FG)=\{\textbf{r}_{G}(a)G\mid F_{h}=FG\cdot a\mbox{ for }h\in\mbox{Hom}(\Omega_F,G)\}.
\end{equation}

\begin{definition}\label{def:6.1}Let $a\in\mbox{Map}(G,F^c)$ be such that $\textbf{r}_G(a)\in (F^cG)^\times$. Define
\[
r_G(a):=\textbf{r}_G(a)G,
\]
called the \emph{reduced resolvend of $a$}. Furthermore, if $r_G(a)\in\mathcal{H}(FG)$, define
\[
h_a\in\mbox{Hom}(\Omega_F,G);\hspace{1em}h_a(\omega):=\textbf{r}_G(a)^{-1}(\omega\cdot\textbf{r}_{G}(a)),
\]
called the \emph{homomorphism associated to $r_G(a)$}. Note that the definition of $h_a$ is independent of the choice of the representative $\textbf{r}_G(a)$, and we have $F_h=FG\cdot a$ by (\ref{eq:2.1}) and Proposition~\ref{prop:2.6}. Finally, we say that $r_G(a)$ is \emph{unramified} if $h_a$ is unramified. Similarly for \emph{tame}, \emph{weakly ramified}, and \emph{wildly ramified}.
\end{definition}

We similarly define
\[
\mathcal{H}(\mathcal{O}_FG):=((\mathcal{O}_{F^c}G)^\times/G)^{\Omega_F}.
\]
However, the corresponding analogue of sequence (\ref{eq:6.1}) will no ne longer exact on the right. Nevertheless, for $F$ a finite extension of $\mathbb{Q}_p$, a similar argument as above together with Corollary~\ref{cor:2.10} shows that
\begin{equation}\label{eq:6.1''}
\mathcal{H}(\mathcal{O}_FG)=\{r_G(a)\mid \mathcal{O}_h=\mathcal{O}_FG\cdot a\mbox{ for unramified $h\in\mbox{Hom}(\Omega_F,G)$}\}.
\end{equation}

\begin{definition}\label{def:6.2}For $F$ a number field, define $J(\mathcal{H}(FG))$ to be the restricted direct product of $\mathcal{H}(F_vG)$ with respect to the subgroups $\mathcal{H}(\mathcal{O}_{F_v}G)$ as $v$ ranges over $M_F$. Moreover, let
\[
\eta=\eta_F:\mathcal{H}(FG)\longrightarrow J(\mathcal{H}(FG))
\]
be the diagonal map given by the chosen embeddings $i_v:F^c\longrightarrow F_v^c$ and
\[
U(\mathcal{H}(\mathcal{O}_FG)):=\prod_{v\in M_F}\mathcal{H}(\mathcal{O}_{F_v}G)
\]
the group of unite id\`{e}les. 
\end{definition}

Next, we want to interpret resolvends and reduced resolvends as functions on characters of $G$. First, to simply notation, write
\[
\hat{G}=\hat{G}_F:=\mbox{Hom}(G,(F^c)^\times)
\]
for the set of $F^c$-valued irreducible characters on $G$. Define
\[
\det=\det\nolimits_F:\mathbb{Z}\hat{G}\longrightarrow\hat{G};\hspace{1em}
\det\left(\sum_{\chi} n_{\chi}\chi\right):=\prod_{\chi}\chi^{n\chi}
\]
and let
\[
A_{\hat{G}}=A_{\hat{G}_F}:=\ker(\det).
\]
Applying the functor $\mbox{Hom}(-,(F^{c})^{\times})$ to the short exact sequence
\[
\begin{tikzcd}[column sep=1cm, row sep=1.5cm]
0 \arrow{r} &
A_{\hat{G}} \arrow{r} &
\mathbb{Z}\hat{G}\arrow{r}[font=\normalsize, auto]{\det} &
\hat{G} \arrow{r} &
1
\end{tikzcd}
\]
yields the short exact sequence
\begin{equation}\label{eq:6.2}
\begin{tikzcd}[column sep=0.4cm, row sep=1.5cm]
1 \arrow{r} &
\mbox{Hom}(\hat{G},(F^{c})^{\times}) \arrow{r} &
\mbox{Hom}(\mathbb{Z}\hat{G},(F^{c})^{\times}) \arrow{r}&
\mbox{Hom}(A_{\hat{G}},(F^{c})^{\times}) \arrow{r}&
1,
\end{tikzcd}
\end{equation}
where exactness on the right follows from the fact that $(F^{c})^{\times}$ is divisible and thus injective. We will identify (\ref{eq:6.2}) with (\ref{eq:6.1}) as follows.

First, we have a canonical identification
\[
G=\hat{\hat{G}}=\mbox{Hom}(\hat{G},(F^{c})^{\times}).
\]
Secondly, we have canonical identifications
\begin{align}\label{eq:6.3}
(F^{c}G)^{\times}&=\mbox{Map}(\hat{G},(F^{c})^{\times})\\\notag
&=\mbox{Hom}(\mathbb{Z}\hat{G},(F^{c})^{\times}),\notag
\end{align}
where the second identification is obtained by extending maps $\hat{G}\longrightarrow(F^c)^\times$ via $\mathbb{Z}$-linearity, and the first is induced by characters in the following manner. Each $\textbf{r}_{G}(a)\in (F^cG)^\times$ is associated to the map $\varphi\in\mbox{Map}(\hat{G},(F^c)^\times)$ given
\begin{equation}\label{eq:6.4}
\varphi(\chi):=\sum_{s\in G}a(s)\chi(s)^{-1}.
\end{equation}
Conversely, given $\varphi\in\mbox{Map}(\hat{G},(F^c)^\times)$ one recovers  $\textbf{r}_{G}(a)$ by the formula
\begin{equation}\label{eq:6.5}
a(s):=\frac{1}{|G|}\sum_{\chi}\varphi(\chi)\chi(s).
\end{equation}
The third terms
\[
(F^{c}G)^{\times}/G=\mbox{Hom}(A_{\hat{G}},(F^{c})^{\times})
\]
in (\ref{eq:6.1}) and (\ref{eq:6.2}), respectively, are then canoncially identified.

From these identifications, we obtain a commutative diagram
\begin{equation}\label{eq:6.6}
\begin{tikzpicture}[baseline=(current bounding box.center)]
\node at (0,2.5) [name=13] {$\mbox{Hom}(\mathbb{Z}\hat{G},(F^c)^\times)$};
\node at (6,2.5) [name=14] {$\mbox{Hom}(A_{\hat{G}},(F^c)^\times)$};
\node at (0,0) [name=23] {$(F^cG)^\times$};
\node at (6,0) [name=24] {$(F^cG)^\times/G$};
\path[->]
(13) edge node[auto] {$rag_F$} (14)
(23) edge node[below] {$rag_F$} (24);
\draw[double equal sign distance]
(13) -- (23)
(14) -- (24);
\end{tikzpicture}
\end{equation}
where $rag=rag_F$ at the top is restriction to $A_{\hat{G}}$ and the corresponding map at the bottom is the natural quotient map. Taking $\Omega_F$-invariants then yields the commutative diagram
\begin{equation}\label{eq:6.7}
\begin{tikzpicture}[baseline=(current bounding box.center)]
\node at (0,2.5) [name=13] {$\mbox{Hom}_{\Omega_F}(\mathbb{Z}\hat{G},(F^c)^\times)$};
\node at (6,2.5) [name=14] {$\mbox{Hom}_{\Omega_F}(A_{\hat{G}},(F^c)^\times)$};
\node at (0,0) [name=23] {$(FG)^\times$};
\node at (6,0) [name=24] {$\mathcal{H}(FG)$.};
\path[->]
(13) edge node[auto]{$rag_F$} (14)
(23) edge node[below]{$rag_F$} (24);
\draw[double equal sign distance]
(13) -- (23)
(14) -- (24);
\end{tikzpicture}
\end{equation}

\begin{definition}\label{def:6.3}For $F$ a number field, observe that
\begin{equation}\label{eq:6.8}
\prod_{v\in M_F}rag_{F_v}:J(FG)\longrightarrow J(\mathcal{H}(FG))
\end{equation}
is clearly well-defined and that the diagram
\[
\begin{tikzpicture}[baseline=(current bounding box.center)]
\node at (0,2.5) [name=13] {$(FG)^\times$};
\node at (5.5,2.5) [name=14] {$J(FG)$};
\node at (0,0) [name=23] {$\mathcal{H}(FG)$};
\node at (5.5,0) [name=24] {$J(\mathcal{H}(FG))$};
\path[->]
(13) edge node[auto]{$\partial$} (14)
(23) edge node[below]{$\eta$} (24)
(14) edge node[right]{$\prod_v rag_{F_v}$} (24)
(13) edge node[left]{$rag_F$} (23);
\end{tikzpicture}
\]
commutes. By abuse of notation, we write $rag=rag_F$ for the map (\ref{eq:6.8}).
\end{definition}

Via the identifications in (\ref{eq:6.6}) and (\ref{eq:6.7}), we have that
\begin{align*}
(\mathcal{O}_{F^c}G)^\times&\subset\mbox{Hom}(\mathbb{Z}\hat{G},\mathcal{O}_{F^c}^\times);\\
\mathcal{H}(\mathcal{O}_FG)&\subset\mbox{Hom}_{\Omega_F}(A_{\hat{G}},\mathcal{O}_{F^c}^\times)
\end{align*}
These two inclusions, however, are strict in general.

\begin{prop}\label{prop:6.4}For $F$ a finite extension of $\mathbb{Q}_p$ with $(p,|G|)=1$, we have
\begin{align*}
(\mathcal{O}_{F^c}G)^\times&=\mbox{Hom}(\mathbb{Z}\hat{G},\mathcal{O}_{F^c}^\times);\\
\mathcal{H}(\mathcal{O}_FG)&=\mbox{Hom}_{\Omega_F}(A_{\hat{G}},\mathcal{O}_{F^c}^\times).
\end{align*}
\end{prop}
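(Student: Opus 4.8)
The plan is to deduce both equalities from the explicit recovery formula~(\ref{eq:6.5}) together with the divisibility of $\mathcal{O}_{F^c}^\times$; the hypothesis $(p,|G|)=1$ enters solely through the fact that $1/|G|\in\mathcal{O}_{F^c}$. For the first equality, the inclusion $(\mathcal{O}_{F^c}G)^\times\subseteq\mbox{Hom}(\mathbb{Z}\hat{G},\mathcal{O}_{F^c}^\times)$ is already recorded above, so only the reverse inclusion needs proof. Let $\varphi\in\mbox{Hom}(\mathbb{Z}\hat{G},\mathcal{O}_{F^c}^\times)=\mbox{Map}(\hat{G},\mathcal{O}_{F^c}^\times)$ and let $\textbf{r}_{G}(a)\in(F^{c}G)^\times$ be the element corresponding to it under~(\ref{eq:6.3}), so that its coefficients $a(s)$ are recovered by~(\ref{eq:6.5}). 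Since $1/|G|\in\mathcal{O}_{F^c}$ and each $\chi(s)$ is a root of unity, formula~(\ref{eq:6.5}) shows $a(s)\in\mathcal{O}_{F^c}$ for all $s\in G$, hence $\textbf{r}_{G}(a)\in\mathcal{O}_{F^c}G$. The same argument applied to the pointwise inverse $\varphi^{-1}$, which corresponds to $\textbf{r}_{G}(a)^{-1}$ because~(\ref{eq:6.3}) turns group-algebra multiplication into pointwise multiplication of maps on $\hat{G}$, gives $\textbf{r}_{G}(a)^{-1}\in\mathcal{O}_{F^c}G$ as well, so $\textbf{r}_{G}(a)\in(\mathcal{O}_{F^c}G)^\times$. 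Equivalently, one observes that~(\ref{eq:6.5}) exhibits the character map $\mathcal{O}_{F^c}G\longrightarrow\prod_{\chi}\mathcal{O}_{F^c}$ as an isomorphism of $\mathcal{O}_{F^c}$-algebras whenever $(p,|G|)=1$, and reads off unit groups on both sides.

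For the second equality I would first note that $\mathcal{O}_{F^c}^\times$ is a divisible abelian group: for $u\in\mathcal{O}_{F^c}^\times$ and $n\geq1$, any root of $X^{n}-u$ is integral over $\mathcal{O}_{F}$ and has inverse a root of $X^{n}-u^{-1}$, so it again lies in $\mathcal{O}_{F^c}^\times$. Hence $\mathcal{O}_{F^c}^\times$ is an injective $\mathbb{Z}$-module, and applying $\mbox{Hom}(-,\mathcal{O}_{F^c}^\times)$ to $0\to A_{\hat{G}}\to\mathbb{Z}\hat{G}\to\hat{G}\to1$ produces a short exact sequence
\[
1\longrightarrow\mbox{Hom}(\hat{G},\mathcal{O}_{F^c}^\times)\longrightarrow\mbox{Hom}(\mathbb{Z}\hat{G},\mathcal{O}_{F^c}^\times)\longrightarrow\mbox{Hom}(A_{\hat{G}},\mathcal{O}_{F^c}^\times)\longrightarrow1.
\]
Since $\hat{G}$ is finite, $\mbox{Hom}(\hat{G},\mathcal{O}_{F^c}^\times)=\mbox{Hom}(\hat{G},(F^{c})^\times)=G$, and by the first part the middle term is $(\mathcal{O}_{F^c}G)^\times$; all these identifications are compatible with the $\Omega_F$-actions, as in~(\ref{eq:6.7}). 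Thus the displayed sequence is identified with $1\to G\to(\mathcal{O}_{F^c}G)^\times\to(\mathcal{O}_{F^c}G)^\times/G\to1$, yielding an $\Omega_F$-isomorphism $(\mathcal{O}_{F^c}G)^\times/G\simeq\mbox{Hom}(A_{\hat{G}},\mathcal{O}_{F^c}^\times)$. Taking $\Omega_F$-invariants and using that a homomorphism $A_{\hat{G}}\to\mathcal{O}_{F^c}^\times$ is $\Omega_F$-fixed precisely when it is $\Omega_F$-equivariant gives $\mathcal{H}(\mathcal{O}_FG)=\mbox{Hom}_{\Omega_F}(A_{\hat{G}},\mathcal{O}_{F^c}^\times)$.

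Both parts are mostly bookkeeping with the identifications set up in Section~\ref{s:6}. The genuinely substantive input is the hypothesis $(p,|G|)=1$, which is exactly what makes $1/|G|$ integral in~(\ref{eq:6.5}) and hence forces the integral character map to be surjective; this is also why the two inclusions preceding the proposition are strict in general when $p\mid|G|$. The only other point needing a little care is the right-exactness of the $\mbox{Hom}(-,\mathcal{O}_{F^c}^\times)$ sequence, i.e. the divisibility of $\mathcal{O}_{F^c}^\times$, which is what allows one to extend a homomorphism from the finite-index subgroup $A_{\hat{G}}\subseteq\mathbb{Z}\hat{G}$ to all of $\mathbb{Z}\hat{G}$; this rests on $F^{c}$ being algebraically closed rather than on any ramification hypothesis.
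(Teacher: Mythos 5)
Your proof is correct and follows essentially the same route as the paper: the first equality is exactly the paper's observation that the inversion formula~(\ref{eq:6.5}) together with $1/|G|\in\mathcal{O}_{F^c}$ forces integrality (applied to $\varphi$ and to $\varphi^{-1}$). For the second equality the paper merely says it ``follows from the first''; your derivation via the divisibility of $\mathcal{O}_{F^c}^\times$ and the resulting short exact $\mbox{Hom}$-sequence, followed by taking $\Omega_F$-invariants, is the intended argument and is carried out correctly.
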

\begin{proof}It is clear from (\ref{eq:6.4}) and (\ref{eq:6.5}) that
\[
|G|\cdot\mbox{Hom}(\mathbb{Z}\hat{G},\mathcal{O}_{F^c}^{\times})\subset (\mathcal{O}_{F^c}G)^{\times}\subset\mbox{Hom}(\mathbb{Z}\hat{G},\mathcal{O}_{F^c}^{\times}).
\]
But $|G|$ is a unit in $F$ because $p$ does not divide $|G|$. So in fact
\[
(\mathcal{O}_{F^c}G)^\times=\mbox{Hom}(\mathbb{Z}\hat{G},\mathcal{O}_{F^c}^\times),
\]
proving the first equality. The second equality follows from the first.
\end{proof}


\section{The Modified Stickelberger Transpose}\label{s:7}

Let $F$ be a number field or finite extension of $\mathbb{Q}_p$. By modifying what has already been done in \cite[Section 4]{M}, we define a modified Stickelberger map. For $F$ a finite extension of $\mathbb{Q}_p$, the transpose of this map will play a key role in the characterization of reduced resolvends $r_G(a)$, where $A_h=\mathcal{O}_FG\cdot a$ for some weakly ramified $h\in\mbox{Hom}(\Omega_F,G)$ such that $F^h/F$ is totally ramified.

Throughout this section, we let 
\[
\{\zeta_n=\zeta_{n,F}:n\in\mathbb{Z}^+\}
\]
be the set of chosen primitive roots of unity in $F^c$. Moreover, as in Section~\ref{s:6}, we write $\hat{G}$ for the group $\hat{G}_F$ of $F^c$-valued irreducible characters on $G$.

\begin{definition}\label{def:7.1}For each $\chi\in\hat{G}$ and $s\in G$, let
\[
\upsilon(\chi,s)\in \left[\frac{1-|s|}{2},\frac{|s|-1}{2}\right]
\]
denote the unique integer (recall that $G$ has odd order) such that
\[
\chi(s)=(\zeta_{|s|})^{\upsilon(\chi,s)}
\]
and define
\[
\langle\chi,s\rangle_{*}:=\upsilon(\chi,s)/|s|.
\]
Extending this definition by $\mathbb{Q}$-linearity, we obtain a pairing
\[
\langle\hspace{1mm},\hspace{1mm}\rangle_*=\langle\hspace{1mm},\hspace{1mm}\rangle_{*,F}:\mathbb{Q}\hat{G}\times\mathbb{Q}G\longrightarrow\mathbb{Q},
\]
called the \emph{modified Stickelberger pairing}. The map
\[
\Theta_{*}=\Theta_{*F}:\mathbb{Q}\hat{G}\longrightarrow\mathbb{Q}G;
\hspace{1em}
\Theta_{*}(\psi):=\sum_{s\in G}\langle\psi,s\rangle_{*}s
\]
is called the \emph{modified Stickelberger map}.
\end{definition}

\begin{prop}\label{prop:7.2}
Let $\psi\in\mathbb{Z}\hat{G}$. Then $\Theta_{*}(\psi)\in\mathbb{Z}G$ if and only if $\psi\in A_{\hat{G}}$.
\end{prop}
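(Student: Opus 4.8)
The plan is to analyze the condition $\Theta_*(\psi)\in\mathbb{Z}G$ coefficient by coefficient. Write $\psi = \sum_\chi n_\chi \chi \in \mathbb{Z}\hat{G}$, so that $\Theta_*(\psi) = \sum_{s\in G}\langle\psi,s\rangle_* s$ where $\langle\psi,s\rangle_* = \sum_\chi n_\chi \upsilon(\chi,s)/|s|$. Thus $\Theta_*(\psi)\in\mathbb{Z}G$ if and only if $|s|$ divides $\sum_\chi n_\chi \upsilon(\chi,s)$ for every $s\in G$. First I would recast the numerator in terms of $\det(\psi)$: by definition $\chi(s) = \zeta_{|s|}^{\upsilon(\chi,s)}$, so $\det(\psi)(s) = \prod_\chi \chi(s)^{n_\chi} = \zeta_{|s|}^{\sum_\chi n_\chi \upsilon(\chi,s)}$. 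Hence $\sum_\chi n_\chi \upsilon(\chi,s) \equiv 0 \pmod{|s|}$ precisely when $\det(\psi)(s) = 1$, i.e. when $\det(\psi)$ is trivial on the cyclic subgroup $\langle s\rangle$.

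The key equivalence is then: $\Theta_*(\psi)\in\mathbb{Z}G$ $\iff$ for all $s\in G$, $\det(\psi)(s)=1$ $\iff$ $\det(\psi) = 1$ in $\hat{G}$ $\iff$ $\psi\in\ker(\det) = A_{\hat{G}}$. The middle step is immediate since a character is trivial exactly when it is trivial on every element. So the argument is essentially a translation between the additive statement about the exponents $\upsilon(\chi,s)$ modulo $|s|$ and the multiplicative statement that $\det(\psi)$ annihilates $s$.

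The main point requiring care — and the only genuine obstacle — is the well-definedness and integrality bookkeeping around $\upsilon(\chi,s)$: the value $\upsilon(\chi,s)$ is only the representative of $\chi(s)$'s exponent in the symmetric range $[(1-|s|)/2,(|s|-1)/2]$, so I must check that replacing it by any other integer representative changes $\sum_\chi n_\chi \upsilon(\chi,s)$ only by a multiple of $|s|$, which is clear, and that this does not affect the divisibility criterion. One should also note that the "only if" direction uses that $\psi\in\mathbb{Z}\hat{G}$ (not just $\mathbb{Q}\hat{G}$), since the equivalence with $\det(\psi)=1$ relies on the $n_\chi$ being integers so that $\det(\psi)$ is a genuine character. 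With these remarks in place the proof is a short direct computation; no deep input beyond the definitions of $\Theta_*$, $\langle\ ,\ \rangle_*$, and $\det$ is needed.
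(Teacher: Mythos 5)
Your proposal is correct and is essentially identical to the paper's proof: both compute $(\det\psi)(s)=(\zeta_{|s|})^{\sum_\chi n_\chi\upsilon(\chi,s)}=(\zeta_{|s|})^{|s|\langle\psi,s\rangle_*}$ and conclude that $\Theta_*(\psi)\in\mathbb{Z}G$ holds exactly when $\det(\psi)$ is trivial, i.e.\ when $\psi\in A_{\hat{G}}=\ker(\det)$. The extra remarks on representatives of the exponent modulo $|s|$ are harmless but not needed, since the argument only ever uses the fixed representative $\upsilon(\chi,s)$.
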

\begin{proof}
Write $\psi=\sum_\chi n_{\chi}\chi$ with $n_\chi\in\mathbb{Z}$. For any $s\in G$, we have 
\begin{align*}
(\det\psi)(s)&=\prod_{\chi\in\hat{G}}\chi(s)^{n_{\chi}}\\
&=\prod_{\chi\in\hat{G}} (\zeta_{|s|})^{\upsilon(\chi,s)n_\chi}\\
&=(\zeta_{|s|})^{\sum_\chi|s|\langle\chi,s\rangle_*n_\chi}\\
&=(\zeta_{|s|})^{|s|\langle\psi,s\rangle_*}
\end{align*}
Since $A_{\hat{G}}=\ker(\det)$, this implies that $\psi\in A_{\hat{G}}$ precisely when $\langle\psi,s\rangle_*\in\mathbb{Z}$ for all $s\in G$, or equivalently, when $\Theta_*(\psi)\in\mathbb{Z}G$. This proves the claim.
\end{proof}

Up until now, we have let $\Omega_F$ act trivially on $G$. Below we introduce other $\Omega_F$-actions on $G$, one of which will make the $F$-linear map $\Theta_*:\mathbb{Q}\hat{G}\longrightarrow\mathbb{Q}G$ preserve $\Omega_F$-action. Here, the $\Omega_F$-action on $\hat{G}$ is the canonical one induced by the $\Omega_F$-action on the roots of unity.

\begin{definition}\label{def:7.3}
Let $m=\exp(G)$ and
\[
\kappa:\Omega_F\longrightarrow(\mathbb{Z}/m\mathbb{Z})^{\times}
\]
the $m$\emph{-th cyclotomic character of $\Omega_F$}. In other words, if $\mu_m$ denotes the group of $m$-th roots of unity in $F^c$, then 
\[
\omega(\zeta)=\zeta^{\kappa(\omega)}\hspace{1cm}\mbox{for }\zeta\in\mu_m\mbox{ and }\omega\in\Omega_F.
\]
For $n\in\mathbb{Z}$, let $G(n)$ be the group $G$ equipped with the $\Omega_F$-action given by
\[
\omega\cdot s:=s^{\kappa(\omega^{n})}\hspace{1cm}\mbox{for $s\in G$ and $\omega\in\Omega_F$}.
\]
\end{definition}

\begin{prop}\label{prop:7.4}
The map $\Theta_{*}:\mathbb{Q}\hat{G}\longrightarrow\mathbb{Q}G(-1)$ preserves $\Omega_F$-action.
\end{prop}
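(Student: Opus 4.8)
The plan is to verify the $\Omega_F$-equivariance directly on characters $\chi \in \hat{G}$ and then extend by $\mathbb{Q}$-linearity. First I would fix $\omega \in \Omega_F$ and $\chi \in \hat{G}$, and unwind the definitions on both sides. On one side, I compute $\Theta_*(\omega \cdot \chi) = \sum_{s \in G} \langle \omega \cdot \chi, s \rangle_* \, s$, where the $\Omega_F$-action on $\hat{G}$ is the canonical one: $(\omega \cdot \chi)(s) = \omega(\chi(s))$. On the other side, I compute $\omega \cdot \Theta_*(\chi)$ using the $G(-1)$-action from Definition~\ref{def:7.3}, namely $\omega \cdot s = s^{\kappa(\omega^{-1})} = s^{\kappa(\omega)^{-1}}$ (interpreting the exponent mod $|s|$, which divides $m = \exp(G)$), so that $\omega \cdot \Theta_*(\chi) = \sum_{s \in G} \langle \chi, s \rangle_* \, s^{\kappa(\omega)^{-1}}$.

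The key step is the pointwise identity
\[
\langle \omega \cdot \chi, s \rangle_* = \langle \chi, s^{\kappa(\omega)^{-1}} \rangle_* \qquad \text{for all } s \in G,
\]
after which reindexing the sum over $s$ (replacing $s$ by $s^{\kappa(\omega)}$, a bijection of $G$ since $\kappa(\omega)$ is a unit mod $m$) finishes the proof. To establish this identity, I would trace through Definition~\ref{def:7.1}: on the left, $\upsilon(\omega \cdot \chi, s)$ is the unique integer in $[\frac{1-|s|}{2}, \frac{|s|-1}{2}]$ with $(\omega \cdot \chi)(s) = \zeta_{|s|}^{\upsilon(\omega \cdot \chi, s)}$; since $(\omega \cdot \chi)(s) = \omega(\chi(s)) = \omega(\zeta_{|s|}^{\upsilon(\chi,s)}) = \zeta_{|s|}^{\kappa(\omega)\upsilon(\chi,s)}$, we get $\upsilon(\omega \cdot \chi, s) \equiv \kappa(\omega) \upsilon(\chi, s) \pmod{|s|}$. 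On the right, let $t = s^{\kappa(\omega)^{-1}}$, so $|t| = |s|$ and $s = t^{\kappa(\omega)}$; then $\chi(t)^{\kappa(\omega)} = \chi(t^{\kappa(\omega)}) = \chi(s) = \zeta_{|s|}^{\upsilon(\chi,s)}$, and also $\chi(t) = \zeta_{|t|}^{\upsilon(\chi,t)} = \zeta_{|s|}^{\upsilon(\chi,t)}$, giving $\kappa(\omega)\upsilon(\chi, t) \equiv \upsilon(\chi, s) \pmod{|s|}$, i.e. $\upsilon(\chi, s^{\kappa(\omega)^{-1}}) \equiv \kappa(\omega)^{-1} \upsilon(\chi, s) \pmod{|s|}$. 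Dividing both congruences by $|s|$ shows $\langle \omega \cdot \chi, s \rangle_*$ and $\langle \chi, s^{\kappa(\omega)^{-1}} \rangle_*$ agree modulo $\mathbb{Z}$; since both lie in $\frac{1}{|s|}[\frac{1-|s|}{2}, \frac{|s|-1}{2}]$, a half-open interval of length less than $1$ — here the hypothesis that $G$ has odd order is exactly what guarantees the representative $\upsilon$ is uniquely pinned down — they are equal.

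The main obstacle is bookkeeping with the exponents: one must be careful that $\kappa$ is only defined modulo $m$, that the relevant congruences for a given $s$ are modulo $|s| \mid m$, and that the normalization interval $[\frac{1-|s|}{2}, \frac{|s|-1}{2}]$ makes the lift from $\mathbb{Z}/|s|\mathbb{Z}$ unambiguous (which is where odd order is used). Once the pointwise identity is in hand, the reindexing $s \mapsto s^{\kappa(\omega)}$ is routine, and extending from $\hat{G}$ to all of $\mathbb{Q}\hat{G}$ is immediate by $\mathbb{Q}$-linearity of both $\Theta_*$ and the $\Omega_F$-actions. I would also remark that the choice of $n = -1$ in $G(-1)$ is forced precisely by the appearance of $\kappa(\omega)^{-1}$ above, which is why the target is $\mathbb{Q}G(-1)$ rather than $\mathbb{Q}G(1)$.
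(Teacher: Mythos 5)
There is a genuine error in your key pointwise identity, and it is exactly the inverse that is wrong. Your own two congruences read $\upsilon(\omega\cdot\chi,s)\equiv\kappa(\omega)\,\upsilon(\chi,s)\pmod{|s|}$ and $\upsilon(\chi,s^{\kappa(\omega)^{-1}})\equiv\kappa(\omega)^{-1}\upsilon(\chi,s)\pmod{|s|}$; the two right-hand sides are \emph{not} congruent to each other unless $\kappa(\omega)^2\equiv1$ or $\upsilon(\chi,s)\equiv0$ modulo $|s|$, so the step ``dividing both congruences by $|s|$ shows the two pairings agree modulo $\mathbb{Z}$'' fails. Concretely, take $|s|=7$, $\chi(s)=\zeta_7$, $\kappa(\omega)\equiv2$: then $\langle\omega\cdot\chi,s\rangle_*=2/7$, while $\kappa(\omega)^{-1}\equiv4$ and $\langle\chi,s^{4}\rangle_*=-3/7$, which do not even agree modulo $\mathbb{Z}$. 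The correct identity carries the exponent $\kappa(\omega)$, not its inverse: since $(\omega\cdot\chi)(s)=\chi(s)^{\kappa(\omega)}=\chi(s^{\kappa(\omega)})$ and $|s^{\kappa(\omega)}|=|s|$, uniqueness of the representative $\upsilon$ gives directly $\langle\omega\cdot\chi,s\rangle_*=\langle\chi,s^{\kappa(\omega)}\rangle_*=\langle\chi,\omega^{-1}\cdot s\rangle_*$ in $G(-1)$ --- no congruence bookkeeping is needed, because the two character values are literally equal and the orders coincide. This invariance $\langle\omega\cdot\chi,\omega\cdot s\rangle_*=\langle\chi,s\rangle_*$ is what the paper's proof uses.

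The error is not cosmetic and does not cancel downstream: granting your identity and performing your reindexing $s\mapsto s^{\kappa(\omega)}$ yields $\sum_s\langle\chi,s\rangle_*\,s^{\kappa(\omega)}=\omega^{-1}\cdot\Theta_*(\chi)$, whereas the target is $\omega\cdot\Theta_*(\chi)=\sum_s\langle\chi,s\rangle_*\,s^{\kappa(\omega)^{-1}}$; you would be proving anti-equivariance. With the corrected identity, substituting $u=s^{\kappa(\omega)}$ (equivalently $s=\omega\cdot u$) gives $\Theta_*(\omega\cdot\chi)=\sum_u\langle\chi,u\rangle_*(\omega\cdot u)=\omega\cdot\Theta_*(\chi)$ as required. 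The rest of your outline --- extension by $\mathbb{Q}$-linearity, the role of odd order in pinning down $\upsilon$ uniquely, and the observation that $n=-1$ is forced --- is sound once this is repaired.
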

\begin{proof}
Observe that for any $\chi\in\hat{G}$ and $s\in G(-1)$, we have
\[
(\omega\cdot\chi)(s)=\chi(s^{\kappa(\omega)})=\chi(\omega^{-1}\cdot s).
\]
Since $s$ and $\omega^{-1}\cdot s$ have the same order, this implies that
\[
\langle\omega\cdot\chi,s\rangle_{*}=\langle\chi,\omega^{-1}\cdot s\rangle_{*}
\]
We deduce that
\begin{align*}
\Theta_{*}(\omega\cdot\chi)&=\sum_{s\in G}\langle\omega\cdot\chi,s\rangle_{*}s\\
&=\sum_{s\in G}\langle\chi,\omega^{-1}\cdot s\rangle_{*}s\\
&=\sum_{s\in G}\langle\chi,s\rangle_{*} (\omega\cdot s)\\
&=\omega\cdot\Theta_{*}(\chi).
\end{align*}
Since $\Omega_F$ acts trivially on $\mathbb{Q}$, this shows that $\Theta_{*}$ preserves $\Omega$-action.
\end{proof}

From Propositions~\ref{prop:7.2} and \ref{prop:7.4}, we obtain an $\Omega_F$-equivariant map
\[
\Theta_{*}:A_{\hat{G}}\longrightarrow\mathbb{Z}G(-1)
\]
Applying $\mbox{Hom}(-,(F^c)^\times)$ then yields an $\Omega_F$-equivariant homomorphism
\[
\Theta_{*}^{t}:\mbox{Hom}(\mathbb{Z}G(-1),(F^{c})^{\times})\longrightarrow\mbox{Hom}(A_{\hat{G}},(F^{c})^{\times});\hspace{1em}f\mapsto f\circ\Theta_*,
\]
where $\Omega_F$ acts on homomorphisms as usual: if $X$ and $Y$ are left $\Omega_F$-modules and $\varphi:X\longrightarrow Y$ is a group homomorphism, then
\[
(\varphi\cdot\omega)(x):=\omega^{-1}\cdot\varphi(\omega\cdot x)\hspace{1cm}\mbox{for $x\in X$ and $\omega\in\Omega_F$}.
\]
Restricting to elements which are $\Omega_F$-invariant, we obtain a homomorphism
\[
\Theta^t_{*}=\Theta_{*,F}^{t}:\mbox{Hom}_{\Omega_F}(\mathbb{Z}G(-1),(F^{c})^{\times})\longrightarrow\mbox{Hom}_{\Omega_F}(A_{\hat{G}},(F^{c})^{\times}),
\]
called the \emph{modified Stickelberger transpose}.

To simplify notation, we let 
\begin{align}\label{eq:7.1}
\Lambda(FG)&:=\mbox{Map}_{\Omega_F}(G(-1),F^c);\\\notag
\Lambda(\mathcal{O}_FG)&:=\mbox{Map}_{\Omega_F}(G(-1),\mathcal{O}_{F^c}).
\end{align}
Identifying $\mbox{Hom}_{\Omega_F}(A_{\hat{G}},(F^c)^\times)$ with $\mathcal{H}(FG)$ via (\ref{eq:6.7}), we see that $\Theta_{*}^{t}$ may be viewed as a homomorphism $\Lambda(FG)^{\times}\longrightarrow\mathcal{H}(FG)$ via (\ref{eq:6.7}).
 Moreover, notice that  we have
\begin{equation}\label{eq:7.2}
\Theta^t_*(\Lambda(\mathcal{O}_FG)^\times)\subset\mbox{Hom}_{\Omega_F}(A_{\hat{G}},\mathcal{O}_{F^c}^\times).
\end{equation}

\begin{prop}\label{prop:7.5}Let $g\in\Lambda(FG)^\times$ and $\Theta^t_*(g)=r_G(a)$. Then 
\[
\textbf{r}_G(a)\textbf{r}_G(a)^{[-1]}=1.
\]
\end{prop}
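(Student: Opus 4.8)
The plan is to unwind the definition of $\Theta_*^t$ in terms of characters and then observe that the product $\textbf{r}_G(a)\textbf{r}_G(a)^{[-1]}$, viewed as a function on $\hat G$ via the identification (\ref{eq:6.3}), evaluates to $1$ on every $\chi\in\hat G$. Concretely, suppose $g\in\Lambda(FG)^\times=\mathrm{Map}_{\Omega_F}(G(-1),F^c)^\times$. Under (\ref{eq:6.3}) the element $g$ corresponds to a homomorphism $\mathbb{Z}G(-1)\to(F^c)^\times$, and $\Theta_*^t(g)=g\circ\Theta_*$ as a homomorphism $A_{\hat G}\to(F^c)^\times$, i.e.\ via (\ref{eq:6.7}) it is the class $r_G(a)$ where the associated function $\varphi\in\mathrm{Map}(\hat G,(F^c)^\times)$ satisfies, for $\psi\in A_{\hat G}$,
\[
\varphi(\psi)=\prod_{s\in G}g(s)^{\langle\psi,s\rangle_*}.
\]
First I would record the effect of the involution $[-1]$ on the character side: if $\textbf{r}_G(a)$ corresponds to $\varphi$ via (\ref{eq:6.4}), then a direct computation shows $\textbf{r}_G(a)^{[-1]}$ corresponds to the function $\chi\mapsto\varphi(\chi^{-1})=\varphi(\bar\chi)$, so that $\textbf{r}_G(a)\textbf{r}_G(a)^{[-1]}$ corresponds to $\chi\mapsto\varphi(\chi)\varphi(\chi^{-1})=\varphi(\chi+\chi^{-1})$ (writing $\hat G$ additively inside $\mathbb{Z}\hat G$), which makes sense because $\chi+\chi^{-1}\in A_{\hat G}$ as $\det(\chi+\chi^{-1})=\chi\chi^{-1}=1$.

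The heart of the matter is then the computation
\[
\varphi(\chi+\chi^{-1})=\prod_{s\in G}g(s)^{\langle\chi+\chi^{-1},s\rangle_*}=\prod_{s\in G}g(s)^{\langle\chi,s\rangle_*+\langle\chi^{-1},s\rangle_*},
\]
and I claim every exponent vanishes. Indeed, from Definition~\ref{def:7.1}, $\langle\chi,s\rangle_*=\upsilon(\chi,s)/|s|$ where $\upsilon(\chi,s)$ is the unique integer in $[(1-|s|)/2,(|s|-1)/2]$ with $\chi(s)=\zeta_{|s|}^{\upsilon(\chi,s)}$; since $\chi^{-1}(s)=\zeta_{|s|}^{-\upsilon(\chi,s)}$ and $-\upsilon(\chi,s)$ lies in the same symmetric interval (here we use that $|s|$ is odd, so the interval is symmetric about $0$ and contains a unique representative of each residue class mod $|s|$), we get $\upsilon(\chi^{-1},s)=-\upsilon(\chi,s)$ and hence $\langle\chi,s\rangle_*+\langle\chi^{-1},s\rangle_*=0$ for every $s\in G$. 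Therefore $\varphi(\chi+\chi^{-1})=1$ for all $\chi$, which is exactly the statement that $\textbf{r}_G(a)\textbf{r}_G(a)^{[-1]}=1$ in $(F^cG)^\times$.

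The main obstacle, such as it is, lies in being careful about the several identifications in play: matching the homomorphism-level definition of $\Theta_*^t$ with the function-on-characters picture of (\ref{eq:6.4})--(\ref{eq:6.5}), and verifying that the chosen representative $\textbf{r}_G(a)$ of the class $\Theta_*^t(g)$ can be taken to be literally the resolvend attached to $\varph$ by (\ref{eq:6.5}) — a priori $\Theta_*^t(g)\in\mathcal{H}(FG)$ is only a coset modulo $G$, so one should note that the product $\textbf{r}_G(a)\textbf{r}_G(a)^{[-1]}$ is well defined independently of the representative (changing $a$ by an element of $G$ multiplies $\textbf{r}_G(a)$ by a group element $t$ and $\textbf{r}_G(a)^{[-1]}$ by $t^{-1}$). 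Once these bookkeeping points are settled, the odd-order symmetry of the interval $[(1-|s|)/2,(|s|-1)/2]$ does all the real work and the identity drops out.
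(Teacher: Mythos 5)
Your proof is correct and follows essentially the same route as the paper's: both rest on the antisymmetry $\langle\chi^{-1},s\rangle_*=-\langle\chi,s\rangle_*$, which holds because $|s|$ is odd so the interval defining $\upsilon(\chi,s)$ is symmetric about $0$. The only cosmetic difference is in the endgame: the paper evaluates the product on all of $A_{\hat{G}}$, concludes it equals some $t\in G$, and then kills $t$ via the involution and the odd order of $G$, whereas you evaluate directly at $\chi+\chi^{-1}\in A_{\hat{G}}$ for each $\chi$, which pins down the value at every character at once and sidesteps the residual $G$-ambiguity.
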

\begin{proof}View $\textbf{r}_G(a)$ as an element of $\mbox{Hom}(\mathbb{Z}\hat{G},(F^c)^\times)$ using (\ref{eq:6.3}). Let $\psi\in A_{\hat{G}}$ be given. Write $\psi=\sum_\chi n_\chi\chi\in A_{\hat{G}}$ with $n_\chi\in\mathbb{Z}$ and define
\[
\psi^{[-1]}:=\sum_{\chi\in\hat{G}}n_\chi\chi^{-1}.
\]
Then, it follows directly from (\ref{eq:6.4}) that 
\[
\textbf{r}_G(a)^{[-1]}(\psi)=\textbf{r}_G(a)(\psi^{[-1]}).
\]
Observe that $\langle\chi^{-1},s\rangle_*=-\langle\chi,s\rangle_*$ for $\chi\in\hat{G}$ and $s\in G$. We easily see that
\[
\Theta^t_*(g)(\psi^{[-1]})=\Theta^t_*(g)(\psi)^{-1}.
\]
Since $r_G(a)$ is simply the restriction of $\textbf{r}_G(a)$ to $A_{\hat{G}}$ via (\ref{eq:6.6}), we deduce that
\begin{align*}
(\textbf{r}_G(a)\textbf{r}_G(a)^{[-1]})(\psi)
&=\textbf{r}_G(a)(\psi)\textbf{r}_G(a)(\psi^{[-1})\\
&=\Theta^t_*(g)(\psi)\Theta^t_*(g)(\psi^{[-1]})\\
&=1.
\end{align*}
This implies that $\textbf{r}_G(a)\textbf{r}_G(a)^{[-1]}$ is the trivial map when restricted to $A_{\hat{G}}$. By (\ref{eq:6.6}), this means that there exists $t\in G$ such that
\[
\textbf{r}_G(a)\textbf{r}_G(a)^{[-1]}=t.
\]
Applying $[-1]$ to the above then yields $\textbf{r}_G(a)^{[-1]}\textbf{r}_G(a)=t^{-1}$. But $G$ has odd order, so we must have $t=1$, which proves the claim.
\end{proof}

\begin{definition}\label{def:7.6}
For $F$ a number field, define $J(\Lambda(FG))$ to be the restricted direct product of $\Lambda(F_vG)^\times$ with respect to the subgroups $\Lambda(\mathcal{O}_{F_v}G)^\times$ as $v$ ranges over $M_F$. Let
\[
\lambda=\lambda_F:\Lambda(FG)^\times\longrightarrow J(\Lambda(FG))
\]
be the diagonal map induced by the chosen embeddings $i_v:F^c\longrightarrow F_v^c$ and
\[
U(\Lambda(\mathcal{O}_FG)):=\prod_{v\in M_F}\Lambda(\mathcal{O}_{F_v}G)^\times
\]
the group of unit id\`{e}les. Observe that
\begin{equation}\label{eq:7.4}
\prod_{v\in F_F}\Theta^t_{*,F_v}:J(\Lambda(FG))\longrightarrow J(\mathcal{H}(FG))
\end{equation}
is well-defined by (\ref{eq:7.2}) and Proposition~\ref{prop:6.4}. Moreover, the diagram
\[
\begin{tikzpicture}[baseline=(current bounding box.center)]
\node at (0,2.5) [name=13] {$\Lambda(FG)^\times$};
\node at (5.5,2.5) [name=14] {$J(\Lambda(FG))$};
\node at (0,0) [name=23] {$\mathcal{H}(FG)$};
\node at (5.5,0) [name=24] {$J(\mathcal{H}(FG))$};
\path[->, font=\small]
(13) edge node[auto]{$\lambda$} (14)
(23) edge node[below]{$\eta$} (24)
(14) edge node[right]{$\prod_v\Theta^t_{*,F_v}$} (24)
(13) edge node[left]{$\Theta^t_{*,F}$} (23);
\end{tikzpicture}
\]
commutes because our convention is to choose
\[
\{\zeta_{n,F_v}:=i_v(\zeta_{n,F}):n\in\mathbb{Z}^+\}
\]
to be the set of distinguished primitive roots of unity in $F_v^c$ for each $v\in M_F$. By abuse of notation, we will also write $\Theta^t_*=\Theta^t_{*,F}$ for the map in (\ref{eq:7.4}).
\end{definition}


\section{Decomposition of Local Tame Resolvends}\label{s:8}

In this section, unless specified $F$ will always denote a finite extension of $\mathbb{Q}_p$. Modifying what has already been done in \cite[Section 5]{M}, we characterize reduced resolvends $r_G(a)$, where $A_h=\mathcal{O}_FG\cdot a$ for a tame $h\in\mbox{Hom}(\Omega_F,G)$. 

Let $\pi:=\pi_F$ be the chosen uniformizer of $F$ and $q:=q_F$, which denotes the order of the residue field $\mathcal{O}_F/(\pi)$. Moreover, let 
\[
\{\zeta_n=\zeta_{n,F}:n\in\mathbb{Z}^+\}
\]
be the chosen set of distinguished primitive roots of unity in $F^c$, and $F^{nr}$ the maximal unramified extension of $F$ contained in $F^c$, and

The structures of $F^{nr}/F$ and $F^{t}/F$ are well-known (see \cite[Sections 7 and 8]{F-CF}, for example). On one hand, the field $F^{nr}$ is obtained by adjoining to $F$ all $n$-th roots of unity for $(n,p)=1$, and so $\mbox{Gal}(F^{nr}/F)$ is a procyclic group topologically generated by the Frobenius automorphism $\phi=\phi_F$ given by
\[
\phi(\zeta_n)=\zeta_n^q\hspace{1cm}\mbox{for all }(n,p)=1.
\]
On the other hand, the field $F^t$ is obtained by adjoining to $F^{nr}$ all $n$-th roots of $\pi$ for $(n,p)=1$. Choose a coherent set of radicals 
\[
\{\pi^{1/n}:n\in\mathbb{Z}^+\}
\] 
such that $(\pi^{1/mn})^{n}=\pi^{1/m}$ and then define $\pi^{m/n}:=(\pi^{1/n})^{m}$ for all $m,n\in\mathbb{Z}^+$. We then obtain a distinguished topological generator $\sigma=\sigma_F$ of the procyclic group $\mbox{Gal}(F^{t}/F^{nr})$, which is given by
\[
\sigma(\pi^{1/n})=\zeta_n\pi^{1/n}\hspace{1cm}\mbox{for all }(n,p)=1.
\]
If $\phi$ also denotes the unique lifting of $\phi$ from $\mbox{Gal}(F^{nr}/F)$ to $\Omega_F^{t}$ fixing the radicals $\pi^{1/n}$ for $(n,p)=1$, then $\Omega_F^{t}$ is topologically generated by $\phi$ and $\sigma$.
\begin{definition}\label{def:8.1}Let $h\in\mbox{Hom}(\Omega_F^t,G)$. Define
\begin{align*}
h^{nr}\in\mbox{Hom}(\Omega^t_F,G);\hspace{1em}&h^{nr}(\phi):=h(\phi)\mbox{ and }h^{nr}(\sigma):=1;\\
h^{tot}\in\mbox{Hom}(\Omega^t_F,G);\hspace{1em}&h^{tot}(\phi):=1\mbox{ and }h^{tot}(\sigma):=h(\sigma).
\end{align*}
Notice that $h^{nr}$ is clearly unramified and that $h=h^{nr}h^{tot}$, called the \emph{factorization of $h$ with respect to $\sigma$}.
\end{definition}

Observe that $\phi\sigma\phi^{-1}\sigma^{-1}=\sigma^{q-1}$ because the elements on both sides have the same effect on $\zeta_{n}$ and $\pi^{1/n}$ for $(n,p)=1$. So, the abelianization $(\Omega_F^{t})^{ab}$ of $\Omega_F^t$ is the direct product of the procyclic group topologically generated by $\overline{\phi}$ with the cyclic group $\langle\overline{\sigma}\rangle$ of order $q-1$, where $\overline{\sigma}$ and $\overline{\phi}$ are the images of $\phi$ and $\sigma$ in $(\Omega_F^{t})^{ab}$, respectively. Since $G$ is abelian, every $h\in\mbox{Hom}(\Omega_F^t,G)$ factorizes through $\Omega_F^t\longrightarrow(\Omega_F^t)^{ab}$, and $h(\sigma)$ is of order dividing $q-1$. Conversely, it is clear that $h(\sigma)$ can be any element in $G$ of order dividing $q-1$.

\begin{definition}\label{def:8.2}
Let $G_{(q-1)}$ be the subgroup of $G$ consisting of all elements of order dividing $q-1$. For each $s\in G_{(q-1)}$, define
\[
f_s=f_{F,s}\in\Lambda(FG);
\hspace{1em}f_{s}(t):=\begin{cases}
\pi & \mbox{if }t=s\neq1\\
1 & \mbox{otherwise}
\end{cases}
\]
(recall (\ref{eq:7.1})). Note that $f_{s}$ preserves $\Omega_F$-action because $F$ contains all $(q-1)$-st roots of unity, whence elements in $G_{(q-1)}$ are fixed by $\Omega_F$, as is $\pi$. Set
\[
\mathfrak{F}_F:=\{f_s:s\in G_{(q-1)}\}.
\]
\end{definition}

\begin{definition}\label{def:8.3}For $F$ a number field, define
\[
\mathfrak{F}=\mathfrak{F}_F:=\{f\in J(\Lambda(FG))\mid f_v\in\mathfrak{F}_{F_v}\mbox{ for all }v\in M_F\},
\]
whose elements are called \emph{prime $\mathfrak{F}$-elements}.
\end{definition}

\begin{prop}\label{prop:8.4}Let $s\in G_{(q-1)}$ and define
\[
h\in\mbox{Hom}(\Omega_F^{t},G);\hspace{1em}h(\phi)=1\mbox{ and }h(\sigma)=s.
\]
Then $F^{h}=F(\pi^{1/|s|})$, and there exists $a\in F_h$ such that $A_{h}=\mathcal{O}_FG\cdot a$ and 
\[
r_{G}(a)=\Theta_{*}^{t}(f_s).
\]
\end{prop}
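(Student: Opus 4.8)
The plan is to compute $F^h$ explicitly and then produce a concrete generator $a$ of $A_h$ over $\mathcal{O}_FG$ whose reduced resolvend is $\Theta_*^t(f_s)$. First I would verify that $F^h = F(\pi^{1/|s|})$. Since $h$ factors through $(\Omega_F^t)^{ab}$ with $h(\phi)=1$ and $h(\sigma)=s$, the fixed field $F^h = (F^t)^{\ker h}$ is the subfield on which $\sigma$ acts through an element of order $|s|$; concretely $\ker h$ contains $\phi$ and $\sigma^{|s|}$ and is generated by these (topologically), so $F^h$ is the totally ramified degree-$|s|$ extension $F(\pi^{1/|s|})$, which is cyclic over $F$ because $F$ contains the $(q-1)$-st (hence the $|s|$-th) roots of unity. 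In particular $F^h/F$ is tame and totally ramified of degree $e=|s|$, so by Proposition~\ref{prop:1.1} one has $v_{F^h}(\mathfrak{D}_{F^h/F}) = e-1$, and since $e$ is odd the square root $A^h = \mathfrak{P}^{-(e-1)/2}$ of the inverse different makes sense, where $\mathfrak{P}=(\pi^{1/e})$.

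Next I would write down the candidate element. The natural choice, following the tame theory, is $a \in F_h = \mathrm{Map}_{\Omega_F}({}^hG, F^c)$ supported on the coset of $\langle s\rangle$ and taking a value built from $\pi^{1/e}$; more precisely I expect $a$ to be the unique $\Omega_F$-equivariant map with $a(s^j) = (\text{unit})\cdot \pi^{j/e}$ suitably normalized (say $a(s^j) = \pi^{\upsilon/e}$-type entries dictated by the Stickelberger data), and zero outside $\langle s\rangle$. The point of the modified Stickelberger map is exactly that the exponents $\langle\chi,s^j\rangle_*$ lie in the symmetric range $[(1-e)/2,(e-1)/2]/e$, which is what pins $a$ down to lie in $A^h = \mathfrak{P}^{-(e-1)/2}$ rather than merely in $\mathcal{O}^h$. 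So I would compute $\mathbf{r}_G(a)$ via the character formula \eqref{eq:6.4}: evaluate the associated $\varphi \in \mathrm{Map}(\hat G,(F^c)^\times)$ at $\chi\in\hat G$ and check that $\varphi(\chi) = \pi^{\langle\chi,s\rangle_*} = f_s\circ\Theta_*$ evaluated at the relevant characters, i.e. that $r_G(a) = \Theta_*^t(f_s)$ as elements of $\mathcal{H}(FG)$. This is essentially the defining computation of $\Theta_*^t$ together with the explicit Galois action of $\sigma$ on $\pi^{1/e}$ (namely $\sigma(\pi^{1/e}) = \zeta_e \pi^{1/e}$), which produces exactly the pairing $\langle\chi,s\rangle_*$ in the exponent.

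Finally I would verify $A_h = \mathcal{O}_FG\cdot a$. By Corollary~\ref{cor:2.9} it suffices to show $a\in A_h$ and $\mathbf{r}_G(a)\mathbf{r}_G(a)^{[-1]} \in (\mathcal{O}_FG)^\times$. The second condition follows from Proposition~\ref{prop:7.5}: since $r_G(a) = \Theta_*^t(f_s)$ with $f_s \in \Lambda(\mathcal{O}_FG)^\times$, we get $\mathbf{r}_G(a)\mathbf{r}_G(a)^{[-1]} = 1$, which is certainly a unit. For the first condition, $a\in A_h$ amounts to checking $v_{F^h}(a(t)) \geq -v_{F^h}(A^h) = -(e-1)/2$ for all $t$, which is immediate from the construction since the exponents $\upsilon(\chi,s^j)/e$ recombine (via \eqref{eq:6.5}) into values with valuation at least $-(e-1)/(2e)$ in $F^h$-normalization — here one uses that $1/e$ is a unit in $\mathcal{O}_{F^c}$ since $(e,p)=1$, so the $\tfrac{1}{|G|}$ in \eqref{eq:6.5} causes no denominators. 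I expect the main obstacle to be the valuation bookkeeping in this last step: one must carefully track the $F^h$-adic valuations of the Fourier-inverted entries $a(t) = \tfrac{1}{|G|}\sum_\chi \varphi(\chi)\chi(t)$ and confirm that the symmetric choice of $\upsilon(\chi,s)$ is exactly what forces $a$ into the self-dual lattice $A^h$ — neither into a larger fractional ideal nor merely into $\mathcal{O}^h$ — together with checking that $a$ genuinely lies in $F_h$ (i.e.\ is $\Omega_F$-equivariant for the twisted action ${}^hG$), which reduces to matching the $\sigma$- and $\phi$-actions against $h$.
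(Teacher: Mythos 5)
Your construction is essentially the paper's: the generator is obtained by Fourier-inverting the target function $\chi\mapsto\pi^{\langle\chi,s\rangle_*}$, which yields $a$ supported on $\langle s\rangle$ with $a(s^j)=\sigma^j(\alpha)$ for $\alpha=\tfrac{1}{e}\sum_{k=0}^{e-1}\Pi^{k+\frac{1-e}{2}}$, $\Pi=\pi^{1/e}$. One small correction to your first guess: the values $a(s^j)$ are not single terms $(\text{unit})\cdot\pi^{j/e}$ but Gauss-sum-like combinations of all the $\Pi^{k+\frac{1-e}{2}}$; your fallback of defining $a$ through (\ref{eq:6.5}) from $\varphi(\chi)=\pi^{\langle\chi,s\rangle_*}$ is the right reading and matches the paper exactly, as does your identification of $F^h=F(\Pi)$ and the resolvend computation via (\ref{eq:6.4}).

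Where you genuinely diverge is in closing the generation statement. The paper proves $A^h=\mathcal{O}_F\mbox{Gal}(F^h/F)\cdot\alpha$ directly, by inverting the discrete Fourier transform over $\mathcal{O}_F$ (computation (\ref{eq:8.2})): each basis element $\Pi^{l+\frac{1-e}{2}}$ of $A^h$ is exhibited as an $\mathcal{O}_F$-combination of the conjugates $\sigma^i(\alpha)$, using that $\zeta_e\in\mathcal{O}_F$. You instead check the two hypotheses of Corollary~\ref{cor:2.9}: that $a\in A_h$ (a one-line valuation estimate, since each term of $\alpha$ has valuation at least $\tfrac{1-e}{2}$ and $1/e$ is a unit --- this is much easier than you anticipate, not the main obstacle) and that $\mathbf{r}_G(a)\mathbf{r}_G(a)^{[-1]}=1$ via Proposition~\ref{prop:7.5}. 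Both routes are valid; yours is the one the paper itself adopts later for the wild analogue (Proposition~\ref{prop:12.2}), and it is arguably cleaner here since it avoids the explicit basis manipulation, at the cost of invoking the self-duality machinery of Section~\ref{s:7}. The remaining points you flag (equivariance of $a$, the trivial case $s=1$) are handled as you describe.
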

\begin{proof}If $s=1$, then take $a\in F_h$ to be such that $a(t)=0$ if $t\neq 1$ and $a(1)=1$. Since $h=1$, it is clear that $A_h=\mathcal{O}_h=\mathcal{O}_FG\cdot a$ and $r_G(a)=1=\Theta^{t}_{*}(f_s)$.

Now assume that $s\neq 1$. Let $e:=|s|$ and $\Pi:=\pi^{1/e}$. Then $F^{h}=F(\Pi)$ since $\ker(h)$ is topologically generated by $\phi$ and $\sigma^{e}$, which both fix $\Pi$, and 
\[
[\Omega_F^t:\ker(h)]=e=[F(\Pi):F].
\]
Note that= $F^h/F$ is totally ramified and $\Pi$ is a uniformizer of $F^h$. It follows that $\mathcal{O}^h=\mathcal{O}_F[\Pi]$ (see \cite[Chapte I Proposition 18]{S}, for example). Since
\[
A^h=\Pi^{(1-e)/2}\mathcal{O}^h
\]
by Proposition~\ref{prop:1.1}, we see that
\begin{equation}\label{eq:8.1}
\{\Pi^{k+(1-e)/2}\mid k=0,1,\dots,e-1\}
\end{equation}
is an $\mathcal{O}_F$-basis for $A^{h}$. First, we show that $A^{h}=\mathcal{O}_F\mbox{Gal}(F^h/F)\cdot\alpha$, where
\[
\alpha:=\frac{1}{e}\sum_{k=0}^{e-1}\Pi^{k+\frac{1-e}{2}}.
\]
Note that $\alpha\in A^h$ since $(e,p)=1$ implies that $e\in\mathcal{O}_F^\times$.

Notice that $\mbox{Gal}(F^h/F)$ is a cyclic group of order $e$ generated by the restriction of $\sigma$ to $F^h$. For each $i=0,1,\dots,e-1$, we have
\[
\sigma^{i}(\alpha)=\frac{1}{e}\sum_{k=0}^{e-1}\zeta_{e}^{(k+\frac{1-e}{2})i}\Pi^{k+\frac{1-e}{2}}.
\]
Multiplying both sides by $\zeta_{e}^{-(l+(1-e)/2)i}$ yields
\[
\sigma^{i}(\alpha)\zeta_{e}^{-(l+\frac{(1-e)}{2})i}=\frac{1}{e}\sum_{k=0}^{e-1}\zeta_{e}^{(k-l)i}\Pi^{k+\frac{1-e}{2}}.
\]
Summing the above over all $l=0,1,\dots,e-1$, we obtain
\begin{equation}\label{eq:8.2}
\sum_{i=0}^{e-1}\sigma^{i}(\alpha)\zeta_{e}^{-(l+\frac{1-e}{2})i}
= \frac{1}{e}\sum_{k=0}^{e-1}\Pi^{k+\frac{1-e}{2}}\sum_{i=0}^{e-1}\zeta_{e}^{(k-l)i}
=\Pi^{l+\frac{1-e}{2}}.
\end{equation}
This shows that $A^h=\mathcal{O}_F\mbox{Gal}(F^h/F)\cdot\alpha$ since (\ref{eq:8.1}) is an $\mathcal{O}_F$-basis for $A^{h}$ and $\zeta_{e}\in\mathcal{O}_F$. Since $A_h=\mbox{Map}_{\Omega_F}(^hG,A^h)$, one easily verifies that $a\in\mbox{Map}(G,F^c)$ defined by
\[
a(t):=\begin{cases}
\omega(\alpha) & \mbox{if $t=h(\omega)$ for $\omega\in\Omega_F^t$}\\
0 & \mbox{otherwise}
\end{cases}
\]
is an element in $F_h$ and that $A_h=\mathcal{O}_FG\cdot a$. 

To show that $r_G(a)=\Theta^t_*(f_s)$, first observe that
\[
\textbf{r}_{G}(a)
=\sum_{t\in h(\Omega_F)}a(t)t^{-1}\\
=\sum_{i=0}^{e-1}\sigma^i(\alpha) s^{-i}.
\]
Next, let $\chi\in\hat{G}_F$ and $\upsilon:=\upsilon(\chi,s)$ be as in Definition~\ref{def:7.1}, and set
\[
k:=\upsilon-\frac{1-e}{2}\in\{0,1,\cdots,e-1\}.
\]
Then, we have
\[
\textbf{r}_{G}(a)(\chi)
=\sum_{i=0}^{e-1}\sigma^{i}(\alpha)\zeta_{e}^{-\upsilon  i}
=\sum_{i=0}^{e-1}\sigma^{i}(\alpha)\zeta_{e}^{-(k+\frac{1-e}{2}) i}
\]
and the same computation as in (\ref{eq:8.2}) shows that
\[
\textbf{r}_{G}(a)(\chi)=\Pi^{k+\frac{1-e}{2}}=\pi^{\langle\chi,s\rangle_{*}}.
\]
On the other hand, we have
\[
\Theta_{*}^{t}(f_{s})(\chi)
=f_s\Big(\sum_{t\in G}\langle\chi,t\rangle_*t\Big)\\
=\prod_{t\in G}f_{s}(t)^{\langle\chi,t\rangle_{*}}\\
=\pi^{\langle\chi,s\rangle_{*}}
\]
also. We conclude from the identification (\ref{eq:6.6}) that $r_{G}(a)=\Theta_{*}^{t}(f_{s})$.
\end{proof}

Next we consider an arbitrary $h\in\mbox{Hom}(\Omega_F^t,G)$.

\begin{thm}\label{thm:8.5}Let $h\in\mbox{Hom}(\Omega_F^{t},G)$. If $A_h=\mathcal{O}_FG\cdot a$, then we have
\[
r_{G}(a)=u\Theta_{*}^{t}(f_s)
\]
for some $u \in\mathcal{H}(\mathcal{O}_FG)$ and for $s:=h(\sigma)$.
\end{thm}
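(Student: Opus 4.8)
The plan is to reduce the statement to the two cases already settled---the totally and tamely ramified case of Proposition~\ref{prop:8.4} and the unramified case of Corollary~\ref{cor:2.10}---and then to glue the resulting generators using the multiplicativity of Proposition~\ref{prop:4.4}, together with the elementary fact that any two $\mathcal{O}_FG$-generators of the free rank-one module $A_h$ differ by a unit of $\mathcal{O}_FG$.

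First I would invoke the factorization $h=h^{nr}h^{tot}$ of Definition~\ref{def:8.1}. Since $h^{tot}$ is precisely the homomorphism determined by $\phi\mapsto 1$ and $\sigma\mapsto s$, and $s=h(\sigma)$ lies in $G_{(q-1)}$ (Definition~\ref{def:8.2}), Proposition~\ref{prop:8.4} provides an element $a_{tot}\in F_{h^{tot}}$ with $A_{h^{tot}}=\mathcal{O}_FG\cdot a_{tot}$ and $r_G(a_{tot})=\Theta^t_{*}(f_s)$. On the other hand $h^{nr}$ is unramified, so Corollary~\ref{cor:2.10} provides $a_{nr}\in F_{h^{nr}}$ with $\mathcal{O}_{h^{nr}}=\mathcal{O}_FG\cdot a_{nr}$---and hence $A_{h^{nr}}=\mathcal{O}_{h^{nr}}=\mathcal{O}_FG\cdot a_{nr}$, as the inverse different of an unramified extension is trivial---together with $\textbf{r}_G(a_{nr})\in(\mathcal{O}_{F^c}G)^{\times}$, that is, $r_G(a_{nr})\in\mathcal{H}(\mathcal{O}_FG)$.

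Next I would choose $a'$ with $\textbf{r}_G(a')=\textbf{r}_G(a_{nr})\textbf{r}_G(a_{tot})$, which is legitimate because $\textbf{r}_G$ is an isomorphism of $F^cG$-modules. Applying Proposition~\ref{prop:4.4} with $h_1=h^{nr}$ (unramified) and $h_2=h^{tot}$ (tame, hence weakly ramified) yields $a'\in F_h$ and $A_h=\mathcal{O}_FG\cdot a'$. Since $A_h=\mathcal{O}_FG\cdot a$ by hypothesis and $A_h$ is free of rank one over $\mathcal{O}_FG$, there is a unit $\gamma\in(\mathcal{O}_FG)^{\times}$ with $a=\gamma\cdot a'$, and therefore $\textbf{r}_G(a)=\gamma\,\textbf{r}_G(a_{nr})\textbf{r}_G(a_{tot})$ by $\mathcal{O}_FG$-linearity of $\textbf{r}_G$. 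Passing to reduced resolvends in $\mathcal{H}(FG)=(F^cG)^{\times}/G$ (a group, since $G$ abelian makes $F^cG$ commutative) and setting $u:=\bigl(\gamma\,\textbf{r}_G(a_{nr})\bigr)G$, we obtain $r_G(a)=u\,\Theta^t_{*}(f_s)$.

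Finally one checks $u\in\mathcal{H}(\mathcal{O}_FG)$: the product $\gamma\,\textbf{r}_G(a_{nr})$ lies in $(\mathcal{O}_{F^c}G)^{\times}$ since both factors do, and it is $\Omega_F$-invariant modulo $G$ because $\gamma\in(\mathcal{O}_FG)^{\times}$ is fixed by $\Omega_F$ while $\omega\cdot\textbf{r}_G(a_{nr})=\textbf{r}_G(a_{nr})h^{nr}(\omega)$ with $h^{nr}(\omega)\in G$; hence $u\in((\mathcal{O}_{F^c}G)^{\times}/G)^{\Omega_F}=\mathcal{H}(\mathcal{O}_FG)$, as required. I do not anticipate a genuine obstacle here: the substantive content is entirely contained in Propositions~\ref{prop:8.4} and~\ref{prop:4.4}, and this theorem is essentially their assembly, the only point demanding a little care being the (routine) verification that the unramified contribution $u$ really lands in $\mathcal{H}(\mathcal{O}_FG)$ rather than in the larger group $\mbox{Hom}_{\Omega_F}(A_{\hat{G}},\mathcal{O}_{F^c}^{\times})$.
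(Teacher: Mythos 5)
Your proposal is correct and follows essentially the same route as the paper: factor $h=h^{nr}h^{tot}$ with respect to $\sigma$, apply Corollary~\ref{cor:2.10} (with (\ref{eq:6.1''})) to $h^{nr}$ and Proposition~\ref{prop:8.4} to $h^{tot}$, glue via Proposition~\ref{prop:4.4}, and absorb the unit $\gamma$ relating the two generators of $A_h$ into $u$. Your closing verification that $u$ lands in $\mathcal{H}(\mathcal{O}_FG)$ is a detail the paper leaves implicit, but it is the same argument.
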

\begin{proof}Let $h=h^{nr}h^{tot}$ be the factorization of $h$ with respect to $\sigma$. By Corollary~\ref{cor:2.10} and (\ref{eq:6.1''}), there exists $a_{nr}\in F_{h^{nr}}$ such that $\mathcal{O}_{h^{nr}}=\mathcal{O}_FG\cdot a_{nr}$ and
\[
r_G(a_{nr})=u'\hspace{1cm}\mbox{for some }u'\in\mathcal{H}(\mathcal{O}_FG).
\]
By Proposition~\ref{prop:8.4}, there exists $a_{tot}\in F_{h^{tot}}$ such that $A_{h^{tot}}=\mathcal{O}_FG\cdot a_{tot}$ and
\[
r_G(a_{tot})=\Theta^t_s(f_s)\hspace{1cm}\mbox{for }s:=h(\sigma).
\]
Proposition~\ref{prop:4.4} then yields an element $a'\in F_h$ such that $A_h=\mathcal{O}_FG\cdot a'$ and
\[
\textbf{r}_{G}(a')=\textbf{r}_{G}(a_{nr})\textbf{r}_{G}(a_{tot}).
\]
But $a=\gamma\cdot a'$ for some $\gamma\in(\mathcal{O}_FG)^{\times}$ because $A_h=\mathcal{O}_FG\cdot a$ also, so 
\[
r_{G}(a)=rag(\gamma)r_{G}(a')=(rag(\gamma) u')\Theta_{*}^{t}(f_{s}),
\]
where $u:=rag(\gamma) u'\in\mathcal{H}(\mathcal{O}_FG)$. This proves the claim.
\end{proof}

\begin{thm}\label{thm:8.6}Let $s\in G_{(q-1)}$ and $u\in\mathcal{H}(\mathcal{O}_FG)$. If $h$ is the homomorphism associated to $u\Theta^t_*(f_s)$, then $h(\sigma)=s$ and 
\[
r_{G}(a)=u\Theta_{*}^{t}(f_s)
\]
for some $a\in F_h$ such that $A_h=\mathcal{O}_FG\cdot a$.
\end{thm}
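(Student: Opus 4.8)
The plan is to prove Theorem~\ref{thm:8.6} as a converse to Theorem~\ref{thm:8.5}: run the decomposition used there backwards, building the required $a$ as a product of an unramified local generator and the totally ramified generator supplied by Proposition~\ref{prop:8.4}. First I would note that $u\Theta^t_*(f_s)$ genuinely lies in $\mathcal{H}(FG)$, since $\Theta^t_*(f_s)\in\mathcal{H}(FG)$ by Proposition~\ref{prop:8.4} and $\mathcal{H}(\mathcal{O}_FG)\subset\mathcal{H}(FG)$, so the associated homomorphism $h$ is well defined.

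Next, using the description (\ref{eq:6.1''}) of $\mathcal{H}(\mathcal{O}_FG)$, write $u=r_G(a_{nr})$ for some unramified $h_{nr}\in\mbox{Hom}(\Omega_F,G)$ and some $a_{nr}\in F_{h_{nr}}$ with $\mathcal{O}_{h_{nr}}=\mathcal{O}_FG\cdot a_{nr}$. Since $h_{nr}$ is unramified it factors through the unramified quotient of $\Omega_F^t$, hence when viewed in $\mbox{Hom}(\Omega_F^t,G)$ it is trivial on the inertia generator $\sigma$; that is, it is of the form $h^{nr}$ in Definition~\ref{def:8.1}. Let $h^{tot}\in\mbox{Hom}(\Omega_F^t,G)$ be defined by $h^{tot}(\phi)=1$ and $h^{tot}(\sigma)=s$, which is legitimate because $s\in G_{(q-1)}$, and apply Proposition~\ref{prop:8.4} to obtain $a_{tot}\in F_{h^{tot}}$ with $A_{h^{tot}}=\mathcal{O}_FG\cdot a_{tot}$ and $r_G(a_{tot})=\Theta^t_*(f_s)$.

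Now set $h':=h^{nr}h^{tot}$. Since $h^{nr}$ is unramified and $h^{tot}$ is tame, hence weakly ramified, Proposition~\ref{prop:4.4} (with $h_1=h^{nr}$ and $h_2=h^{tot}$) yields $a\in F_{h'}$ with $A_{h'}=\mathcal{O}_FG\cdot a$ and $\textbf{r}_G(a)=\textbf{r}_G(a_{nr})\textbf{r}_G(a_{tot})$, and therefore $r_G(a)=r_G(a_{nr})r_G(a_{tot})=u\Theta^t_*(f_s)$. It remains to identify $h'$ with the homomorphism $h$ associated to $u\Theta^t_*(f_s)$. On one hand, $A_{h'}=\mathcal{O}_FG\cdot a$ forces $F_{h'}=FG\cdot a$, so the homomorphism associated to $r_G(a)$ is $h'$ itself. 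On the other hand, the connecting map $\delta$ of the exact sequence following (\ref{eq:6.1}) is a group homomorphism, so $\delta(u\Theta^t_*(f_s))=\delta(u)\delta(\Theta^t_*(f_s))=h^{nr}h^{tot}=h'$. Hence $h=h'$, which gives $h(\sigma)=h^{nr}(\sigma)h^{tot}(\sigma)=s$, together with $A_h=\mathcal{O}_FG\cdot a$ and $r_G(a)=u\Theta^t_*(f_s)$, as desired.

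I do not expect a serious obstacle, since the argument is essentially bookkeeping layered on top of Propositions~\ref{prop:4.4} and~\ref{prop:8.4}; the only points needing care are verifying that the unramified homomorphism extracted from (\ref{eq:6.1''}) becomes trivial on $\sigma$ after passing to $\Omega_F^t$ (so that the factorization of Definition~\ref{def:8.1} and the hypotheses of Proposition~\ref{prop:4.4} apply cleanly), and recording that $a\mapsto r_G(a)$ followed by passage to the associated homomorphism is compatible with multiplication, i.e.\ that $\delta$ respects products. Both are immediate from the definitions.
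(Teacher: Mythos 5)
Your proposal is correct and follows essentially the same route as the paper's own proof: decompose $u\Theta^t_*(f_s)$ into the unramified piece handled by (\ref{eq:6.1''}) and the totally ramified piece handled by Proposition~\ref{prop:8.4}, then combine the two generators via Proposition~\ref{prop:4.4}. Your extra care in checking that the associated homomorphism of the product is the product of the associated homomorphisms (via the multiplicativity of $\delta$) is a point the paper leaves implicit, but it is the same argument.
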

\begin{proof}Let $h^{nr}$ and $h^{tot}$ be the homomorphisms associated to $u$ and $\Theta^{t}_{*}(f_s)$, respectively. From (\ref{eq:6.1''}), we know that $\mathcal{O}_{h^{nr}}=\mathcal{O}_FG\cdot a_{nr}$ for some $a_{nr}\in F_{h^{nr}}$ satisfying $r_G(a_{nr})=u$, and that $h^{nr}$ is unramified so $h^{nr}(\sigma)=1$. On the other hand, Proposition~\ref{prop:8.4} implies that $A_{h^{tot}}=\mathcal{O}_FG\cdot a_{tot}$ for some $a_{tot}\in F_{h^{tot}}$ such that $r_G(a_{tot})=\Theta^t_*(f_s)$, and that $h^{tot}(\sigma)=s$. Proposition~\ref{prop:4.4} then yields the desired element $a\in F_h$. Finally, we have $h(\sigma)=h^{nr}(\sigma)h^{tot}(\sigma)=s$.
\end{proof} 

Theorem~\ref{thm:8.7} may be viewed as a global version of Theorems~\ref{thm:8.5} and~\ref{thm:8.6}.

\begin{thm}\label{thm:8.7}Let $F$ be a number field. Let $h\in\mbox{Hom}(\Omega_F,G)$ and $F_h=FG\cdot b$. Then, we have $h$ is tame if and only if 
\begin{equation}\label{eq:8.3}
rag(c)=\eta(r_G(b))^{-1}u\Theta^t_*(f)
\end{equation}
for some $c\in J(FG)$, $u\in U(\mathcal{H}(\mathcal{O}_FG))$, and $f\in\mathfrak{F}$. Moreover, if this is the case, then $j(c)=\mbox{cl}(A_{h})$ and $f_v=f_{v,s_v}$, where $s_v=h_v(\sigma_{F_v})$, for all $v\in M_F$. In particular, we have $f_v=1$ if and only if $h_v$ is unramified. 
\end{thm}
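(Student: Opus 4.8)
The plan is to deduce Theorem~\ref{thm:8.7} from the local results Theorems~\ref{thm:8.5} and~\ref{thm:8.6} together with the local-global dictionary of Proposition~\ref{prop:3.4}, using the standard fact (visible through Remark~\ref{rem:2.4}) that $h$ is tame if and only if $h_v=h\circ\tilde i_v$ is tame for every $v\in M_F$.

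For the forward implication, I would begin with a tame $h$, so that every $h_v$ is tame and all but finitely many $h_v$ are unramified. Since $h$ is tame, hence weakly ramified, $A_h$ is locally free over $\mathcal{O}_FG$, and for each $v$ I may write $A_{h_v}=(A_h)_v=\mathcal{O}_{F_v}G\cdot a_v$. Theorem~\ref{thm:8.5} applied at $v$ furnishes $u_v\in\mathcal{H}(\mathcal{O}_{F_v}G)$ with $r_G(a_v)=u_v\Theta^t_{*,F_v}(f_{v,s_v})$ and $s_v:=h_v(\sigma_{F_v})$; at the unramified places $s_v=1$, so $f_{v,s_v}=1$, which puts $u:=(u_v)$ in $U(\mathcal{H}(\mathcal{O}_FG))$ and makes $f:=(f_{v,s_v})$ a prime $\mathfrak{F}$-element. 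Writing $a_v=c_v\cdot b$ with $c_v\in(F_vG)^\times$, Proposition~\ref{prop:3.4} applied to $M=A_h$ yields $c:=(c_v)\in J(FG)$ and $j(c)=\mbox{cl}(A_h)$; applying $\textbf{r}_G$ and reducing modulo $G$ then converts $a_v=c_v\cdot b$ into the $v$-component of~(\ref{eq:8.3}). Since $f_v=f_{v,s_v}$ with $s_v=h_v(\sigma_{F_v})$ by construction, and since a tame $h_v$ is unramified exactly when $h_v(\sigma_{F_v})=1$, i.e.\ exactly when $f_{v,s_v}=1$, the remaining assertions follow.

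For the converse, I would take a triple $(c,u,f)$ with $f_v=f_{v,s_v}$ (so $s_v\in G_{(q_{F_v}-1)}$, forced by $f_v\in\mathfrak{F}_{F_v}$) satisfying~(\ref{eq:8.3}), and apply Theorem~\ref{thm:8.6} at each $v$: the homomorphism $h^{(v)}$ associated to $u_v\Theta^t_{*,F_v}(f_{v,s_v})$ is tame, has $h^{(v)}(\sigma_{F_v})=s_v$, and comes with $a_v$ satisfying $A_{h^{(v)}}=\mathcal{O}_{F_v}G\cdot a_v$ and $r_G(a_v)=u_v\Theta^t_{*,F_v}(f_{v,s_v})$. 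Reading~(\ref{eq:8.3}) in the $v$-component shows $\textbf{r}_G(a_v)$ and $c_v\textbf{r}_G(b)$ differ by an element of $G$, and after replacing $a_v$ by a suitable $G$-translate I may assume $a_v=c_v\cdot b$. Substituting this into $h^{(v)}(\omega)=\textbf{r}_G(a_v)^{-1}(\omega\cdot\textbf{r}_G(a_v))$ and using that $c_v\in(F_vG)^\times$ is fixed by $\Omega_{F_v}$ identifies $h^{(v)}$ with $h_v$, whose own cocycle description comes from $F_h=FG\cdot b$ via the embedding $i_v$. Hence every $h_v$ is tame, so $h$ is tame; then $A_h$ is locally free, $(A_h)_v=\mathcal{O}_{F_v}G\cdot a_v$ with $a_v=c_v\cdot b$, and Proposition~\ref{prop:3.4} again gives $j(c)=\mbox{cl}(A_h)$, with $f_v=f_{v,s_v}$ and $s_v=h_v(\sigma_{F_v})$.

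I expect the one genuinely delicate step to be the identification $h^{(v)}=h_v$ in the converse: it relies on $c_v$ being $\Omega_{F_v}$-invariant so that it cancels out of the associated-homomorphism formula, and on matching that formula with the cocycle attached to $b$ under $i_v$. Everything else — the restricted-product bookkeeping placing $c$, $u$, $f$ in the correct groups, and the appeal to Proposition~\ref{prop:3.4} for $j(c)=\mbox{cl}(A_h)$ — is routine once the local statements are available.
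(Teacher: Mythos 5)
Your proposal is correct and follows essentially the same route as the paper: Theorem~\ref{thm:8.5} at each place plus Proposition~\ref{prop:3.4} for the forward direction, and Theorem~\ref{thm:8.6} together with the observation that the $\Omega_{F_v}$-invariant factor $rag(c_v)$ does not change the associated homomorphism for the converse. The only cosmetic difference is that you absorb the ambient element $t_v\in G$ by translating $a_v$, whereas the paper absorbs it into the id\`ele as $t=(t_v)\in U(\mathcal{O}_FG)$ and uses $j(ct)=j(c)$; these are equivalent.
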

\begin{proof}First assume that $h$ is tame. Let $F_h=FG\cdot b$ and $A_{h_v}=\mathcal{O}_{F_v}G\cdot a_v$ for $v\in M_F$ as in Remark~\ref{rem:3.5}. Moreover, let $c\in J(FG)$ be such that $a_v=c_v\cdot b$ for each $v\in M_F$, so $j(c)=\mbox{cl}(A_h)$. At each $v\in M_F$, we have
\[
r_G(a_v)=rag(c_v)r_G(b).
\]
Since $h_v$ is tame, we know from Theorem~\ref{thm:8.5} that 
\[
r_G(a_v)=u_v\Theta_*^t(f_{s_v})
\]
for some $u_v\in\mathcal{H}(\mathcal{O}_{F_v}G)$ and for $s_v:=h_v(\sigma_{F_v})$. Note that $f_{s_v}=1$ if and only if $s_v=1$, which occurs precisely when $h_v$ is unramified. This shows that $f_{s_v}=1$ for almost all $v\in M_F$ and so $f:=(f_{s_v})\in\mathfrak{F}$. Letting $c:=(c_v)\in J(FG)$ and $u:=(u_v)\in U(\mathcal{H}(\mathcal{O}_FG))$, we see that (\ref{eq:8.3}) holds. 

Conversely, assume that (\ref{eq:8.3}) holds. Then, at each $v\in M_F$ we have
\[
rag(c_v)r_G(b)=u_v\Theta_*^t(f_v),
\]
with $f_v=f_{s_v}$ say. This implies that $h_v$ is the homomorphism associated to $u_v\Theta^t_*(f_{s_v})$, which is tame by (\ref{eq:6.1''}) and Proposition~\ref{prop:8.4}, whence $h$ is tame. The fact that $h_v(\sigma_{F_v})=s_v$ for all $v\in M_F$ follows from Theorem~\ref{thm:8.6}.

It remains to show that $j(c)=\mbox{cl}(A_h)$. Again it follows from Theorem~\ref{thm:8.6} that there exists $a_v\in F_{h_v}$ such that $A_{h_v}=\mathcal{O}_{F_v}G\cdot a_v$ and
\[
r_G(a_v)=u_v\Theta^t_*(f_{s_v}).
\]
This implies that $r_G(a_v)=rag(c_v)r_G(b)$, so there exists $t_v\in G$ such that
\[
\textbf{r}_G(a_v)=c_v\textbf{r}_G(b)t=\textbf{r}_G(c_vt_v\cdot b),
\]
Since $\textbf{r}_G$ is bijective, the above is equivalent to
\[
a_v=(c_vt_v)\cdot b.
\]
If $t:=(t_v)\in J(FG)$, then Proposition~\ref{prop:3.4} gives us
$\mbox{cl}(A_h)=j(ct)=j(c)$, as desired. This completes the proof of the theorem.
\end{proof}


\section{Approximation Theorems}\label{s:9}

Let $F$ be a number field. Theorem~\ref{thm:8.7} implies that for any $c\in J(FG)$, we have $j(c)$ is tame $A$-realizable if and only if
\begin{equation}\label{eq:9.1}
rag(c)\in\eta(\mathcal{H}(FG))U(\mathcal{H}(\mathcal{O}_FG))\Theta^t_*(\mathfrak{F})
\end{equation}
It is not clear from the above that the tame $A$-realizable classes form a subgroup of $\mbox{Cl}(\mathcal{O}_FG)$ because $\mathfrak{F}$ is only a set. Below we state two approximation theorems from \cite{M}, which will allow us to replace $\mathfrak{F}$ by $J(\Lambda(FG))$ in (\ref{eq:9.1}).

\begin{definition}\label{def:9.1}
Let $\mathfrak{m}$ be an ideal in $\mathcal{O}_F$. For each $v\in M_F$, let
\begin{align*}
U_{\mathfrak{m}}(\mathcal{O}_{F_v^c})&:=(1+\mathfrak{m}\mathcal{O}_{F_v^c})\cap(\mathcal{O}_{F_v^c})^{\times};\\
U'_{\mathfrak{m}}(\Lambda(\mathcal{O}_{F_v}G))&:=\{g_v\in\Lambda(\mathcal{O}_{F_v}G)^\times\mid g_v(s)\in U_\mathfrak{m}(\mathcal{O}_{F_v^c})\mbox{ for all }s\in G,s\neq 1\},
\end{align*}
Furthermore, set
\[
U'_\mathfrak{m}(\Lambda(\mathcal{O}_FG))
:=\left(\prod_{v\in M_F}U_\mathfrak{m}'(\Lambda(\mathcal{O}_{F_v}G))\right)\cap J(\Lambda(FG)).
\]
The \emph{modified ray class group mod $\mathfrak{m}$ of $\Lambda(FG)$} is defined to be
\[
\mbox{Cl}'_{\mathfrak{m}}(\Lambda(FG)):=\frac{J(\Lambda(FG))}{\lambda(\Lambda(FG)^{\times})U'_{\mathfrak{m}}(\Lambda(\mathcal{O}_FG))}.
\]
\end{definition}

\begin{definition}\label{def:9.2}For $g\in J(\Lambda(FG))$ and $s\in G$, we define
\[
g_s:=\prod_{v\in M_F}g_v(s)\in\prod_{v\in M_F}(F_v^c)^\times.
\]
Moreover, choose a set $S=S_F$ of representatives of the $\Omega_F$-orbits of $G(-1)$.
\end{definition}

\begin{thm}\label{thm:9.3}Let $g\in J(\Lambda(FG))$ and $T$ a finite subset of $M_F$. Then, there exists $f\in\mathfrak{F}$ such that $f_v=1$ for all $v\in T$ and 
\[
g\equiv f\hspace{1cm}(\mbox{mod }\lambda(\Lambda(FG)^\times)U_\mathfrak{m}'(\Lambda(\mathcal{O}_FG)))
\]
Moreover, the element $f$ can be chosen such that $f_s\neq 1$ for all $s\in S\backslash\{1\}$.
\end{thm}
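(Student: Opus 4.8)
The plan is to reduce the asserted congruence to a surjectivity statement for a finite generalized ray class group and then to invoke the \v{C}ebotarev density theorem, following \cite[\S 6]{M}. First I would use the ring decomposition of $\Lambda(FG)=\mbox{Map}_{\Omega_F}(G(-1),F^c)$ along the $\Omega_F$-orbits of $G(-1)$: since $\mbox{Stab}_{\Omega_F}(s)=\mbox{Gal}(F^c/F(\zeta_{|s|}))$ for the twisted action, evaluation at a set $S$ of orbit representatives gives a ring isomorphism $\Lambda(FG)\cong\prod_{s\in S}F(\zeta_{|s|})$, with $F(\zeta_1)=F$ attached to $s=1$. This decomposition is compatible with localization at each $v\in M_F$, with the diagonal embeddings, and with the integral unit groups, so it induces decompositions of $J(\Lambda(FG))$, of $\lambda$, and of $U'_{\mathfrak m}(\Lambda(\mathcal O_FG))$, hence an isomorphism
\[
\mbox{Cl}'_{\mathfrak m}(\Lambda(FG))\;\cong\;\prod_{s\in S}\mbox{Cl}_{\mathfrak m_s}\bigl(F(\zeta_{|s|})\bigr),
\]
where $\mbox{Cl}_{\mathfrak m_s}$ is the ray class group modulo $\mathfrak m$ for $s\neq1$ and the ordinary ideal class group for $s=1$; in particular this is a \emph{finite} abelian group. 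Prime $\mathfrak F$-elements are trivial in the $s=1$ factor, but this is harmless here because $\Theta_*$ annihilates the coefficient at $1\in G$, so only the image of $g$ in $\prod_{s\neq1}\mbox{Cl}_{\mathfrak m}(F(\zeta_{|s|}))$ is ever relevant.

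Next I would identify the class of a single prime $\mathfrak F$-element. If $v\in M_F$ splits completely in $F(\zeta_{|s|})$ --- equivalently $s\in G_{(q_v-1)}$, so that $f_{F_v,s}$ is defined --- then $f_{F_v,s}$ is supported only in the $F(\zeta_{|s|})$-factor of $\Lambda(F_vG)$, where it is a uniformizer at the unique degree-one prime $\mathfrak p$ of $F(\zeta_{|s|})$ above $v$ that corresponds, via $i_v$, to the evaluation-at-$s$ coordinate, and it is trivial at every other place and in every other factor. Hence the class of $f_{F_v,s}$ in $\mbox{Cl}'_{\mathfrak m}(\Lambda(FG))$ is the Artin class of $\mathfrak p$ in $\mbox{Cl}_{\mathfrak m}(F(\zeta_{|s|}))$ placed in the $s$-slot and the identity elsewhere, so a product of such elements over distinct primes realizes any product of Artin classes.

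With these in hand the argument runs as follows. Given $g$, let $T'$ be the union of $T$, the set of primes dividing $\mathfrak m$ or $|G|$, and the support of $g$. For each $s\in S\setminus\{1\}$ write the $s$-component of the class of $g$ as $c_s\in\mbox{Cl}_{\mathfrak m}(F(\zeta_{|s|}))$. Since $F(\zeta_{|s|})/F$ is abelian, a prime of $F(\zeta_{|s|})$ that is unramified and of residue degree one over $F$ automatically lies over a prime of $F$ which splits completely; so applying \v{C}ebotarev over $F$ to the ray class field of $F(\zeta_{|s|})$ modulo $\mathfrak m$ yields infinitely many primes $v\notin T'$ of $F$ which split completely in $F(\zeta_{|s|})$ and carry a prime $\mathfrak p\mid v$ with $[\mathfrak p]=c_s$ (the case $c_s=1$ being allowed, via the trivial Frobenius). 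I pick one such prime $v_s$ for each $s\in S\setminus\{1\}$, all distinct, set $f_{v_s}:=f_{F_{v_s},s}$ and $f_v:=1$ for all other $v$. Then $f\in\mathfrak F$, $f_v=1$ for $v\in T\subseteq T'$, and by construction $g\equiv f$ modulo $\lambda(\Lambda(FG)^\times)U'_{\mathfrak m}(\Lambda(\mathcal O_FG))$; moreover $f_s=\pi_{F_{v_s}}\neq1$ for every $s\in S\setminus\{1\}$, which gives the final assertion.

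The genuinely delicate step is the second one: proving carefully that $f_{F_v,s}$ is a uniformizer at exactly one degree-one prime of $F(\zeta_{|s|})$ and that its ray class is the Artin symbol of that prime, which forces one to keep precise track of the twisted $\Omega_F$-action defining $G(-1)$ and of the compatible choice of roots of unity $\zeta_{n,F_v}=i_v(\zeta_{n,F})$. Everything else --- the orbit decomposition, finiteness of the ray class groups, and the \v{C}ebotarev selection together with the disjointness bookkeeping --- is routine, and the outcome is essentially \cite{M} rewritten with $\Theta_*$ in place of the classical Stickelberger map.
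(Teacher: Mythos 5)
The paper gives no argument of its own here---it simply cites \cite[Proposition 6.14]{M}---and what you have written is essentially a reconstruction of McCulloh's proof of that result: decompose $\Lambda(FG)$ along the $\Omega_F$-orbits of $G(-1)$ into a product of the fields $F(\zeta_{|s|})$ (the stabilizer computation for the $(-1)$-twisted action is right), identify $\mbox{Cl}'_{\mathfrak m}(\Lambda(FG))$ with a product of finite ray class groups, recognize $f_{F_v,s}$ as a uniformizer at the single degree-one prime of $F(\zeta_{|s|})$ over $v$ singled out by $i_v$ and evaluation at $s$, and then use \v{C}ebotarev over $F$ applied to the ray class field of $F(\zeta_{|s|})$ to choose distinct primes $v_s\notin T'$ realizing the classes $c_s$, one for each $s\in S\setminus\{1\}$. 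That is the intended argument, and your bookkeeping (including putting the divisors of $\mathfrak m$ and $|G|$ and the support of $g$ into $T'$, and the verification that $f_s\neq 1$ for $s\neq 1$) is correct.

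The one point where you have not proved the congruence \emph{as stated} is the component at $s=1$. With Definition~9.1 as written, $U'_{\mathfrak m}$ imposes only integrality and no congruence at $s=1$, so the $s=1$ factor of $\mbox{Cl}'_{\mathfrak m}(\Lambda(FG))$ is the ordinary class group $\mbox{Cl}(F)$, which need not be trivial; since every prime $\mathfrak F$-element is identically $1$ at $s=1$, the relation $g\equiv f$ genuinely fails in that factor for any $g$ whose $s=1$ idele has nonprincipal class. Your remark that $\Theta_*$ annihilates the coefficient at $1\in G$ correctly explains why this never matters downstream (Theorem~9.3 is only ever used through $\Theta^t_*$, as in Section~10), but it is a statement about how the theorem is applied, not a proof of the asserted congruence. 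To close this you should either work with a formulation in which the $s=1$ component is removed from the modified ray class group (which is what the prime on $\mbox{Cl}'_{\mathfrak m}$ is meant to effect in \cite{M}), or state the congruence only in $\prod_{s\in S\setminus\{1\}}\mbox{Cl}_{\mathfrak m}(F(\zeta_{|s|}))$. Apart from this, the proposal is correct and matches the cited proof.
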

\begin{proof}See \cite[Proposition 6.14]{M}.
\end{proof}

\begin{thm}\label{thm:9.4}Let $\mathfrak{m}$ be an ideal in $\mathcal{O}_F$ divisible by $|G|$ and $\exp(G)^2$. Then,
\[
\mbox{Hom}_{\Omega_{F_v}}(A_{\hat{G}},U_\mathfrak{m}(\mathcal{O}_{F_v^c})\subset\mathcal{H}(\mathcal{O}_{F_v}G)\hspace{1cm}
\mbox{ for all }v\in M_F.
\]
\end{thm}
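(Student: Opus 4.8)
The plan is to show that any $\varphi\in\mathrm{Hom}_{\Omega_{F_v}}(A_{\hat G},U_{\mathfrak m}(\mathcal{O}_{F_v^c}))$ lifts to a unit of $\mathcal{O}_{F_v^c}G$. First I would dispose of the easy cases: if $v\nmid\mathfrak m$, or more generally if the residue characteristic $p$ of $v$ does not divide $|G|$, then $U_{\mathfrak m}(\mathcal{O}_{F_v^c})\subseteq\mathcal{O}_{F_v^c}^\times$ and Proposition~\ref{prop:6.4} already gives $\mathrm{Hom}_{\Omega_{F_v}}(A_{\hat G},\mathcal{O}_{F_v^c}^\times)=\mathcal{H}(\mathcal{O}_{F_v}G)$. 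So I may assume $p\mid|G|$ (hence $p$ is odd, since $G$ has odd order) and $\mathfrak m\mathcal{O}_{F_v^c}\subseteq|G|\mathcal{O}_{F_v^c}\subsetneq\mathcal{O}_{F_v^c}$. Now $\mathcal{O}_{F_v^c}^\times$ is divisible (any unit of $F_v^c$ has an $n$-th root of the same, necessarily zero, valuation), so arguing as in the derivation of (\ref{eq:6.2}) the map $\varphi$ extends to some $\tilde\varphi\in\mathrm{Hom}(\mathbb{Z}\hat G,\mathcal{O}_{F_v^c}^\times)$, which by (\ref{eq:6.3}) I view as an element of the maximal order $\mathcal M$ of $F_v^cG$, with $\chi$-components $\tilde\varphi(\chi)\in\mathcal{O}_{F_v^c}^\times$; in particular $\tilde\varphi\in\mathcal M^\times$. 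Two such lifts differ by an element of $\mathrm{Hom}(\hat G,\mathcal{O}_{F_v^c}^\times)=G$, on which $\Omega_{F_v}$ acts trivially, so — because $\varphi$ is $\Omega_{F_v}$-equivariant — the coset $\tilde\varphi G$ is automatically $\Omega_{F_v}$-invariant. Hence it suffices to prove $\tilde\varphi\in(\mathcal{O}_{F_v^c}G)^\times$: then $\tilde\varphi G$ represents a class in $\mathcal{H}(\mathcal{O}_{F_v}G)=((\mathcal{O}_{F_v^c}G)^\times/G)^{\Omega_{F_v}}$ which is carried to $\varphi$ by (\ref{eq:6.7}). Finally, since $|G|\mathcal M\subseteq\mathcal{O}_{F_v^c}G\subseteq\mathcal M$, I would reduce this further to finding a single $t_0\in G$ with $\tilde\varphi(\chi)\equiv\chi(t_0)\pmod{|G|\mathcal{O}_{F_v^c}}$ for all $\chi\in\hat G$; for then $\tilde\varphi-t_0\in|G|\mathcal M\subseteq\mathcal{O}_{F_v^c}G$, so $\tilde\varphi\in\mathcal{O}_{F_v^c}G$, and it is a unit because each $\chi$-component is one.

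To produce $t_0$: reducing modulo $|G|\mathcal{O}_{F_v^c}$ and using that $\chi_1+\chi_2-\chi_1\chi_2\in A_{\hat G}$ (and $1\in A_{\hat G}$), together with $|G|\mid\mathfrak m$, the assignment $\chi\mapsto\tilde\varphi(\chi)\bmod|G|\mathcal{O}_{F_v^c}$ is a homomorphism $\bar\mu:\hat G\to(\mathcal{O}_{F_v^c}/|G|\mathcal{O}_{F_v^c})^\times$. Because $G$ has odd order, a short valuation count shows the reduction map $\mu_{\exp(G)}\to(\mathcal{O}_{F_v^c}/|G|\mathcal{O}_{F_v^c})^\times$ is injective (a root of unity congruent to $1$ modulo $|G|$ is $1$: a primitive $\ell^k$-th root with $\ell\neq p$ reduces to a unit, while a primitive $p^k$-th root has valuation $<v(p)\le v(|G|)$). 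So it is enough to show that $\bar\mu$ takes values in the image of $\mu_{\exp(G)}$; it then lifts uniquely to a homomorphism $\hat G\to\mu_{\exp(G)}$, which equals $\chi\mapsto\chi(t_0)$ for a unique $t_0\in G$, and this $t_0$ is as required. That $\bar\mu(\chi)$ is the reduction of a root of unity is exactly where the hypothesis $\exp(G)^2\mid\mathfrak m$ is used: for $\chi$ of order $d$ one has $d\chi\in A_{\hat G}$, hence $\tilde\varphi(\chi)^d\in U_{\mathfrak m}(\mathcal{O}_{F_v^c})\subseteq 1+\exp(G)^2\mathcal{O}_{F_v^c}$. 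After splitting off the Teichm\"uller (prime-to-$p$) part of $\tilde\varphi(\chi)\bmod\mathfrak p$, one is reduced to showing: if $u\in 1+\mathfrak p$ and $u^{p^c}\in 1+\exp(G)^2\mathcal{O}_{F_v^c}$, where $p^c$ is the $p$-part of $\exp(G)$, then $u\equiv\zeta\pmod{|G|\mathcal{O}_{F_v^c}}$ for some $\zeta\in\mu_{p^c}$. Since $p$ is odd, $\exp(G)^2\mathcal{O}_{F_v^c}$ lies in the range where $\log$ and $\exp$ are inverse isomorphisms, and a direct computation with the valuations of $(1+x)^{p^c}-1$ (according as $v(x)$ is above, equal to, or below $v(p)/(p-1)$) forces $x:=u-1$ either to lie in $|G|\mathcal{O}_{F_v^c}$, or else to be so close to $\zeta-1$ for a primitive $p^{c'}$-th root of unity $\zeta$ ($c'\le c$) that $u\equiv\zeta\pmod{|G|\mathcal{O}_{F_v^c}}$.

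The hard part is this last $p$-adic estimate. It needs a careful comparison of $v(u-1)$ with $v(p)/(p-1)$ and bookkeeping with the $p$-adic valuations of binomial coefficients, and it is where both divisibility hypotheses on $\mathfrak m$ genuinely enter: the factor $\exp(G)^2$ to push $u^{p^c}$ deep enough that its "approximate $p^c$-th roots'' must be close to honest roots of unity, and the factor $|G|$ both for the output congruence and for the injectivity of $\mu_{\exp(G)}\hookrightarrow(\mathcal{O}_{F_v^c}/|G|\mathcal{O}_{F_v^c})^\times$ used above. This computation is the local analogue of the corresponding step in McCulloh's treatment of the ring of integers in \cite{M}, adapted to the present setting; everything else is formal manipulation of the exact sequences and identifications of Sections~\ref{s:6} and~\ref{s:7}.
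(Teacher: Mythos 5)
The paper offers no internal proof of this theorem (it is quoted from \cite[Theorem 2.14]{M}), so your argument can only be judged on its own terms, and it contains a genuine gap at the final reduction. The formal skeleton is fine: the case $p\nmid|G|$ via Proposition~\ref{prop:6.4}, the lifting of $\varphi$ to $\tilde\varphi\in\mbox{Hom}(\mathbb{Z}\hat{G},\mathcal{O}_{F_v^c}^{\times})$ by divisibility, the automatic $\Omega_{F_v}$-invariance of the coset $\tilde\varphi G$, the identification of $A_{\hat{G}}$ as generated by the trivial character and the elements $\chi_1+\chi_2-\chi_1\chi_2$, and the containment $|G|\mathcal{M}\subseteq\mathcal{O}_{F_v^c}G$ are all correct. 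The problem is that the existence of $t_0\in G$ with $\tilde\varphi(\chi)\equiv\chi(t_0)\pmod{|G|\mathcal{O}_{F_v^c}}$ for \emph{all} $\chi$ is only a sufficient condition for $\tilde\varphi t_0^{-1}\in(\mathcal{O}_{F_v^c}G)^{\times}$, not a necessary one, and the hypotheses do not force it: the $p$-adic lemma you reduce to is false whenever $G$ is not cyclic.

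Concretely, take $G=(\mathbb{Z}/p\mathbb{Z})^{2}$ with $p$ odd, so $|G|=\exp(G)^{2}=p^{2}$, let $F=\mathbb{Q}(\zeta_{p})$, $v$ the prime above $p$, and $\mathfrak{m}=p^{2}\mathcal{O}_F$. Set $u:=1+p$. Then $u^{p}=1+p^{2}+\binom{p}{2}p^{2}+\cdots\in 1+p^{2}\mathcal{O}_{F_v^c}$, so $u$ satisfies the hypothesis of your lemma; but $u-1=p\notin p^{2}\mathcal{O}_{F_v^c}$, and for $1\neq\zeta\in\mu_{p}$ the element $u-\zeta=(1-\zeta)+p$ generates the ideal $(1-\zeta)\mathcal{O}_{F_v^c}\supsetneq p\mathcal{O}_{F_v^c}$, so $u$ is congruent to no element of $\mu_{p}$ modulo $|G|\mathcal{O}_{F_v^c}$. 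This defeats the whole strategy, not just the lemma: writing $\hat{G}=\langle\chi_1,\chi_2\rangle$ and $n(i)\in\{0,\dots,p-1\}$ for the standard representative of $i\in\mathbb{Z}/p\mathbb{Z}$, put $\tilde\varphi(\chi_1^{i}\chi_2^{j}):=u^{n(i)}$. On the generators of $A_{\hat{G}}$ this gives values $1$ and $u^{n(i)+n(i')-n(i+i')}\in\{1,u^{p}\}\subseteq U_{\mathfrak{m}}(\mathcal{O}_{F_v^c})$, and $\Omega_{F_v}$ acts trivially on $\hat{G}$ and on the values, so $\varphi:=\tilde\varphi|_{A_{\hat{G}}}$ lies in $\mbox{Hom}_{\Omega_{F_v}}(A_{\hat{G}},U_{\mathfrak{m}}(\mathcal{O}_{F_v^c}))$; yet no lift $\tilde\varphi t$ satisfies your pointwise congruence, since $\tilde\varphi(\chi_1)=1+p$ is not congruent to any element of $\mu_p$ modulo $p^{2}$. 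The theorem is nonetheless true for this $\varphi$: the Fourier coefficient of $\tilde\varphi$ at $s_1^{l_1}s_2^{l_2}$ equals $\delta_{l_2,0}\,p^{-1}\bigl((1+p)^{p}-1\bigr)/\bigl((1+p)\zeta_p^{l_1}-1\bigr)$, which is integral because the geometric series $\sum_{i=0}^{p-1}u^{i}\zeta_p^{il_1}$ has strictly larger valuation than its individual terms. That cancellation across the cyclic subgroup $\langle\chi_1\rangle$ is invisible to any character-by-character congruence; a correct proof has to estimate the sums $\sum_{\chi'\in\chi\langle\chi_1\rangle}\tilde\varphi(\chi')\chi'(s)$ (which is where the hypothesis $\varphi(d\chi)\in U_{\mathfrak{m}}$ really enters), rather than the individual values $\tilde\varphi(\chi)$.
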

\begin{proof}See \cite[Theorem 2.14]{M}.
\end{proof}

\begin{cor}\label{cor:9.5}Let $\mathfrak{m}$ be an ideal in $\mathcal{O}_F$ divisible by $|G|$ and $\exp(G)^2$. Then,
\[
\Theta_*^t(U_\mathfrak{m}'(\Lambda(\mathcal{O}_FG))\subset U(\mathcal{H}(\mathcal{O}_FG))\hspace{1cm}\mbox{for all }v\in M_F.
\]
\end{cor}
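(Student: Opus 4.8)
The plan is to reduce the assertion to a statement at each place $v$ and then quote Theorem~\ref{thm:9.4}. Since $U(\mathcal{H}(\mathcal{O}_FG))=\prod_{v\in M_F}\mathcal{H}(\mathcal{O}_{F_v}G)$ and, by construction, the map $\Theta^t_*$ in (\ref{eq:7.4}) is the product $\prod_{v\in M_F}\Theta^t_{*,F_v}$ of the local transposes, it suffices to prove
\[
\Theta^t_{*,F_v}\bigl(U'_\mathfrak{m}(\Lambda(\mathcal{O}_{F_v}G))\bigr)\subset\mathcal{H}(\mathcal{O}_{F_v}G)\hspace{1cm}\mbox{for every }v\in M_F;
\]
taking the product over $v$ then yields the desired inclusion of id\`{e}le groups.

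Next I would carry out the computation at a fixed place $v\in M_F$. Let $g\in U'_\mathfrak{m}(\Lambda(\mathcal{O}_{F_v}G))$ and view $g$ as an element of $\mbox{Hom}_{\Omega_{F_v}}(\mathbb{Z}G(-1),(F_v^c)^\times)$ via the identification analogous to (\ref{eq:6.3}). Unwinding the definitions of Section~\ref{s:7} as in the proof of Proposition~\ref{prop:7.5}, for $\psi\in A_{\hat{G}}$ one has
\[
\Theta^t_{*,F_v}(g)(\psi)=g\bigl(\Theta_*(\psi)\bigr)=\prod_{s\in G}g(s)^{\langle\psi,s\rangle_*},
\]
where $g(\Theta_*(\psi))$ makes sense because $\Theta_*(\psi)\in\mathbb{Z}G(-1)$ by Propositions~\ref{prop:7.2} and~\ref{prop:7.4}. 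In particular the exponents $\langle\psi,s\rangle_*$ are integers for $\psi\in A_{\hat{G}}$, and since $\langle\psi,1\rangle_*=0$ the factor at $s=1$ is trivial, so the product runs only over $s\neq1$. For those $s$ the defining condition of $U'_\mathfrak{m}(\Lambda(\mathcal{O}_{F_v}G))$ gives $g(s)\in U_\mathfrak{m}(\mathcal{O}_{F_v^c})$, and $U_\mathfrak{m}(\mathcal{O}_{F_v^c})=(1+\mathfrak{m}\mathcal{O}_{F_v^c})\cap(\mathcal{O}_{F_v^c})^\times$ is readily seen to be a subgroup of $(\mathcal{O}_{F_v^c})^\times$ (the only nonformal point being closure under inverses). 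Hence the product above lies in $U_\mathfrak{m}(\mathcal{O}_{F_v^c})$, and since $g$ is $\Omega_{F_v}$-equivariant so is $\Theta^t_{*,F_v}(g)$; therefore $\Theta^t_{*,F_v}(g)\in\mbox{Hom}_{\Omega_{F_v}}(A_{\hat{G}},U_\mathfrak{m}(\mathcal{O}_{F_v^c}))$.

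Finally I would invoke Theorem~\ref{thm:9.4}, whose hypotheses on $\mathfrak{m}$ are exactly those in force here, to conclude $\mbox{Hom}_{\Omega_{F_v}}(A_{\hat{G}},U_\mathfrak{m}(\mathcal{O}_{F_v^c}))\subset\mathcal{H}(\mathcal{O}_{F_v}G)$; this gives the local inclusion and, by the reduction of the first paragraph, the corollary. I do not anticipate a genuine obstacle: the entire substance is carried by Theorem~\ref{thm:9.4}, and the remaining steps are bookkeeping with the identifications of Sections~\ref{s:6} and~\ref{s:7}. The only two points deserving an explicit line are the integrality $\langle\psi,s\rangle_*\in\mathbb{Z}$ for $\psi\in A_{\hat{G}}$ (Proposition~\ref{prop:7.2}) and the verification that $U_\mathfrak{m}(\mathcal{O}_{F_v^c})$ is closed under inversion.
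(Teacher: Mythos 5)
Your argument is correct and is essentially the paper's own proof: reduce to each place $v$, compute $\Theta^t_{*,F_v}(g)(\psi)=\prod_{s\neq 1}g(s)^{\langle\psi,s\rangle_*}$ using $\langle\psi,1\rangle_*=0$ and the integrality of the exponents from Proposition~\ref{prop:7.2}, and then invoke Theorem~\ref{thm:9.4}. Your extra remark that $U_\mathfrak{m}(\mathcal{O}_{F_v^c})$ is closed under inversion (needed since the exponents may be negative) is a small point the paper leaves implicit.
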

\begin{proof}Let $v\in M_F$ be given. By Theorem~\ref{thm:9.4}, it suffices to show that
\begin{equation}\label{eq:9.2}
\Theta^t_*(U_\mathfrak{m}'(\Lambda(\mathcal{O}_{F_v}G)))\subset
\mbox{Hom}_{\Omega_{F_v}}(A_{\hat{G}},U_\mathfrak{m}(\mathcal{O}_{F_v^c}))
\end{equation}
To that end, let $g_{v}\in U_\mathfrak{m}'(\Lambda(\mathcal{O}_{F_v}G))$. For any $\psi\in A_{\hat{G}}$, we have
\[
\Theta_{*}^{t}(g_{v})(\psi)
=g_{v}\left(\sum_{s\in G}\langle\psi,s\rangle_{*}s\right)
=\prod_{s\neq 1}g_v(s)^{\langle\psi,s\rangle}_*
\]
because $\langle\psi,1\rangle_*=0$ by Definition~\ref{def:7.1}. Moreover, since $\langle\psi,s\rangle_{*}\in\mathbb{Z}$ by Proposition~\ref{prop:7.2} and $g_{v}(s)\in U_{\mathfrak{m}}(\mathcal{O}_{F_v^c})$ by definition for each $s\neq1$, it is clear that $\Theta_{*}^{t}(g_{v})(\psi)\in U_{\mathfrak{m}}(\mathcal{O}_{F_v^c})$. This proves (\ref{eq:9.2}), as desired.
\end{proof}


\section{Proof of Theorem~\ref{thm:1.3}}\label{s:10}

\begin{proof}[Proof of Theorem~\ref{thm:1.3} (a)] 
Let $\upvarrho$ be the composite of the homomorphism $rag$ defined in Definition~\ref{def:6.3} followed by the natural quotient map
\[
J(\mathcal{H}(KG))\longrightarrow\frac{J(\mathcal{H}(KG))}{\eta(\mathcal{H}(KG))U(\mathcal{H}(\mathcal{O}_KG))\Theta^t_*(J(\Lambda(KG)))}.
\]
We show that $\mathcal{A}^t(\mathcal{O}_KG)$ is a subgroup of $\mbox{Cl}(\mathcal{O}_KG)$ by showing that
\[
j^{-1}(\mathcal{A}^t(\mathcal{O}_KG))=\ker(\upvarrho).
\]
In other words, given $c\in J(KG)$, we have $j(c)\in\mathcal{A}^t(\mathcal{O}_KG)$ if and only if
\begin{equation}\label{eq:10.1}
rag(c)\in\partial(\mathcal{H}(KG))U(\mathcal{H}(\mathcal{O}_KG))\Theta^t_*(J(\Lambda(FG))).
\end{equation}

The inclusion $j^{-1}(\mathcal{A}^t(\mathcal{O}_KG))\subset\ker(\upvarrho)$ follows from Theorem~\ref{thm:8.7} and (\ref{eq:6.1'}). To prove the other inclusion, let $c\in J(KG)$ be such that $j(c)\in\ker(\upvarrho)$. Then
\begin{equation}\label{eq:10.2}
rag(c)=\eta(r_G(b))^{-1}u\Theta^t_*(g)
\end{equation}
for some $r_G(b)\in\mathcal{H}(KG)$, $u\in U(\mathcal{H}(\mathcal{O}_KG))$, and $g\in J(\Lambda(KG))$. Let $\mathfrak{m}$ be an ideal in $\mathcal{O}_K$ divisible by $|G|$ and $\exp(G)^2$. Then, we obtain from Theorem~\ref{thm:9.3} an element $f\in\mathfrak{F}$ such that 
\[
g\equiv f\hspace{1cm}(\mbox{mod }\lambda(\Lambda(FG)^\times)U_\mathfrak{m}'(\Lambda(\mathcal{O}_FG)))
\]
By Corollary~\ref{cor:9.5}, applying $\Theta^t_*$ to the above yields
\[
\Theta^t_*(g)\equiv \Theta^t_*(f)\hspace{1cm}(\mbox{mod }\eta(\mathcal{H}(KG))U(\mathcal{H}(\mathcal{O}_FG)))
\]
Hence, changing $b$ and $u$ in (\ref{eq:10.2}) if necessary, we may assume that $g=f$. Letting $h:=h_b$, we see from Theorem~\ref{thm:8.7} that $h$ is tame and $j(c)=\mbox{cl}(A_h)$. This shows that $j(c)\in\mathcal{A}^t(\mathcal{O}_KG)$, which completes the proof of (\ref{eq:10.1}).
\end{proof}

\begin{proof}[Proof of Theorem~\ref{thm:1.3} (b)]
Let $c\in J(KG)$ be such that $j(c)\in\mathcal{A}^t(\mathcal{O}_KG)$. Then (\ref{eq:10.2}) holds by (\ref{eq:10.1}). By the same argument following (\ref{eq:10.2}), we may assume that $g=f$ lies in $\mathfrak{F}$, in which case $h:=h_b$ is tame and $j(c)=\mbox{cl}(A_h)$. By Theorem~\ref{thm:9.3}, we may in fact assume that
\begin{align}
\label{eq:10.3}f_v=1\hspace{1cm}&\mbox{for all }v\in T;\\
\label{eq:10.4}f_s\neq 1\hspace{1cm}&\mbox{for all }s\in S\backslash\{1\},
\end{align}
where $T$ is a given finite set of primes in $\mathcal{O}_K$. Theorem~\ref{thm:8.7} and (\ref{eq:10.3}) imply that $h_v$ is unramified for all $v\in T$. Recall from Definition~\ref{def:9.2} that $S$ a set of representatives of the $\Omega_K$-orbits of $G(-1)$, so (\ref{eq:10.4}) holds for all $s\in G\backslash\{1\}$. In particular, given any $s\in G$, there exists $v\in M_K$ such that $f_v=f_{F_v,s}$, whence $h_v(\sigma_{F_v})=s$ by Theorem~\ref{thm:8.7}. This shows that $h$ is surjective, whence $K_h$ is a field. This completes the proof of the theorem.
\end{proof}


\section{Decomposition of Local Homomorphisms}\label{s:11}

Let $F$ be a finite extension of $\mathbb{Q}_p$. We give a generalization of Definition~\ref{def:8.1} which applies to arbitrary and not necessarily tame $h\in\mbox{Hom}(\Omega_F,G)$. To that end, let $\pi=\pi_F$ be the chosen uniformizer of $F$. Moreover, for each $n\in\mathbb{Z}_{\geq 0}$, let $F_{\pi,n}$ denote the $n$-th Lubin-Tate division field of $F$ corresponding to $\pi$.

\begin{definition}\label{definition:11.1}
Let  $h\in\mbox{Hom}(\Omega_F,G)$. We say that
\[
h=h^{nr}h^{tot},\hspace{1cm}\mbox{where }h^{nr},h^{tot}\in\mbox{Hom}(\Omega_F,G)
\]
is a \emph{factorization of $h$ with respect to $\pi$} if $h^{nr}$ is unramified and $F^{h^{tot}}\subset F_{\pi,n}$ for some $n\in\mathbb{Z}_{\geq 0}$. The \emph{level} of such a decomposition is defined to be
\[
\ell_\pi(h^{nr}h^{tot}):=\min\{n\in\mathbb{Z}_{\geq 0}\mid F^{h^{tot}}\subset F_{\pi,n}\}.
\]
\end{definition}

\begin{remark}\label{rem:11.2}The factorization $h=h^{nr}h^{tot}$ of a tame $h\in\mbox{Hom}(\Omega^t_F,G)$ given in Definition~\ref{def:8.1} is a factorization with respect to $-\pi$. Moreover, note that a homomorphism $h\in\mbox{Hom}(\Omega_F,G)$ has a factorization of level $0$ if and only if $h$ is unramified, and a factorization of level $1$ if and only if $h$ is tame.
\end{remark}

\begin{prop}\label{prop:11.3}Every $h\in\mbox{Hom}(\Omega_F,G)$ has a factorization with respect to $\pi$. If $h$ is weakly ramified, then any such factorization has level at most $2$.
\end{prop}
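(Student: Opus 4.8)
The plan is to establish the two claims separately, starting from the structure theory of Lubin–Tate extensions. For the first claim, let $m\in\mathbb{Z}^+$ be the order of $h(\Omega_F)\subset G$ and write $m=p^a m'$ with $(m',p)=1$. The abelian totally ramified extension $F^{h}/F$ has conductor dividing $\pi^n$ for some $n$, and by Lubin–Tate theory the maximal abelian extension of $F$ is the compositum of $F^{nr}$ with $\bigcup_n F_{\pi,n}$; more precisely, local class field theory identifies $\mathrm{Gal}(F^{ab}/F)$ with the profinite completion of $F^\times$, under which $\mathcal{O}_F^\times$ corresponds to $\mathrm{Gal}(F^{ab}/F^{nr})$ and $U_F^{(n)}=1+\pi^n\mathcal{O}_F$ corresponds to $\mathrm{Gal}(F^{ab}/F_{\pi,n})$, with $\pi$ (a choice of uniformizer) corresponding to the Frobenius lift fixing all $F_{\pi,n}$. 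So I would first decompose $h$ through $F^{ab}$, write the corresponding character of $F^\times$ as a product of an unramified part (sending $\mathcal{O}_F^\times$ to $1$) and a part trivial on $\pi$, and read off that the latter cuts out a subextension of some $F_{\pi,n}$. This gives $h=h^{nr}h^{tot}$ with $F^{h^{tot}}\subset F_{\pi,n}$, i.e. a factorization with respect to $\pi$.

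For the bound on the level when $h$ is weakly ramified: the point is to control the conductor (equivalently, the ramification filtration) of $F^{h^{tot}}/F$. Since $h^{nr}$ is unramified, Lemma~\ref{lem:4.3} shows $F^{h^{tot}}/F$ has the same ramification as $F^{h}/F$, and in particular $F^{h^{tot}}/F$ is weakly ramified. If $F^{h^{tot}}/F$ is tame then $F^{h^{tot}}\subset F_{\pi,1}$ and the level is at most $1$, so assume it is wildly ramified. Then Lemma~\ref{lem:4.2}(c) gives $G(F^{h^{tot}}/F)_0=G(F^{h^{tot}}/F)_1$ and $G(F^{h^{tot}}/F)_2=1$; in particular the ramification groups $G^i$ in the \emph{lower} numbering jump only at $i=0$ and vanish from $i=2$ on. Translating to the \emph{upper} numbering via the Hasse–Arf theorem (valid since $F^{h^{tot}}/F$ is abelian): if $e_0=|G^0/G^1|=|G^0|$ is the ramification index, then the jump of the lower-numbering filtration at $1$ corresponds, under $\psi$, to an upper-numbering jump at $1$ as well when $G^0=G^1$ — one computes $\varphi(1)=1$ — and the filtration is trivial in upper numbering beyond $1$. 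Hence the conductor exponent of every character of $\mathrm{Gal}(F^{h^{tot}}/F)$ is at most $2$, so $F^{h^{tot}}\subset F_{\pi,2}$ and $\ell_\pi(h^{nr}h^{tot})\le 2$.

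The main obstacle I expect is the careful passage between the lower-numbering ramification data supplied by Lemmas~\ref{lem:4.2} and~\ref{lem:4.3} and the upper-numbering (conductor) data needed to invoke the Lubin–Tate description of $F_{\pi,n}$: one must verify that ``$G_2=1$ and $G_0=G_1$'' in lower numbering forces conductor exponent $\le 2$ in upper numbering. Concretely, the Herbrand function $\varphi_{F^{h^{tot}}/F}$ satisfies $\varphi(u)=u$ for $0\le u\le 1$ and then has slope $1/e_0<1$, so $\varphi(u)<2$ for all $u$ in the support of the nontrivial ramification groups; combined with the fact that jumps in the upper numbering occur at $\varphi$ of the lower jumps, this pins the largest upper jump at $<2$, hence $\le 1$ after the Hasse–Arf integrality, giving conductor $\le 2$. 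A secondary technical point is that ``$F^{h^{tot}}\subset F_{\pi,n}$'' as in Definition~\ref{definition:11.1} is phrased in terms of a \emph{fixed} uniformizer $\pi$, whereas the conductor bound is uniformizer-independent; since $F_{\pi,n}\cdot F^{nr}$ is independent of the choice of $\pi$ and absorbing a unit twist into the unramified factor $h^{nr}$ is harmless, this causes no real difficulty but should be noted. Once these translations are in hand, both assertions follow immediately.
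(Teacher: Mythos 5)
Your proof is correct, and for the first claim it is exactly the paper's argument: both decompose $\Omega_F^{ab}\simeq\mbox{Gal}(F^{nr}/F)\times\mbox{Gal}(F_\pi/F)$ via local class field theory (equivalently, split the associated character of $F^\times$ along $F^\times\simeq\pi^{\mathbb{Z}}\times\mathcal{O}_F^\times$) and restrict $h$ to the two factors. The difference is in the level bound: the paper simply defers to the proofs of Byott's Proposition 4.1 and Lemma 4.2, whereas you supply the ramification-theoretic computation yourself --- using Lemma~\ref{lem:4.2}(c) to get $G_0=G_1$, $G_2=1$ in lower numbering, computing the Herbrand function to place the unique upper-numbering jump at $\varphi(1)=1$, and reading off conductor exponent at most $2$ from the identification of $\mbox{Gal}(F^{ab}/F_{\pi,n})$ with $\overline{\langle\pi\rangle}\times U_F^{(n)}$. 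This is essentially the content of the cited results, so your write-up has the merit of being self-contained where the paper is not. Two small points: the reduction from ``$h$ weakly ramified'' to ``$h^{tot}$ weakly ramified'' is not the first claim of Lemma~\ref{lem:4.3} but its second claim applied to $h^{tot}=(h^{nr})^{-1}h$ (the manipulation the paper itself performs in the proof of Proposition~\ref{prop:15.2}); and your appeal to Hasse--Arf is not actually needed, since $\varphi(1)=1$ exactly.
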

\begin{proof}Let $F^{ab}$ denote the maximal abelian extension and $F^{nr}$ the maximal unramified extension of $F$ contained in $F^c$, respectively. Moreover, let $F_\pi$ be the union of all of the $F_{\pi,n}$ for $n\in\mathbb{Z}_{\geq 0}$. Then, fom Local Class Field Theory, we know that $F^{ab}=F^{nr}F_\pi$ and $F^{nr}\cap F_{\pi}=F$. Set $\Omega_F^{ab}:=\mbox{Gal}(F^{ab}/F)$. Then, there is a natural isomorphism
\[
\Omega_F^{ab}\simeq\mbox{Gal}(F^{nr}/F)\times\mbox{Gal}(F_\pi/F),
\]
and we may view $\mbox{Gal}(F^{nr}/F)$ and $\mbox{Gal}(F_\pi/F)$ as subgroups of $\Omega_F^{ab}$.

Now, since $G$ is abelian, every $h\in\mbox{Hom}(\Omega_F,G)$ may be viewed as a homomorphism $\Omega_F^{ab}\longrightarrow G$. Let $h^{nr}$ and $h^{tot}$ be the restrictions of $h$ to $\mbox{Gal}(F^{nr}/F)$ and $\mbox{Gal}(F_\pi/F)$, respectively. It is then clear that $h=h^{nr}h^{tot}$ is a factorization with respect to $\pi$.

Finally, if $h$ is weakly ramified and $h=h^{nr}h^{tot}$ is a factorization of $h$ with respect to $\pi$, then the fact that $\ell_\pi(h^{nr}h^{tot})\leq 2$ follows from the proofs of \cite[Propostion 4.1 and Lemma 4.2]{By}.
\end{proof}


\section{Construction of Local Wild Generators}\label{s:12}

Let $F$ be a finite extension of $\mathbb{Q}_p$. In this section, we will assume that $F/\mathbb{Q}_p$ is unramified and that $p$ is odd. Moreover, we choose $p$ to be the uniformizer of $F$, and $F_{\pi,n}$ is defined as in Section~\ref{s:11}. First we make an observation.

\begin{prop}\label{prop:12.1}Let $L/F$ be a finite Galois extension of ramification index $p$ and $\mathfrak{D}_{L/F}$ its different ideal. Under the assumptions that $F/\mathbb{Q}_p$ is unramified and that $p$ is odd, we have $L/F$ is weakly ramified and $v_{L}(\mathfrak{D}_{L/F})=2(p-1)$.
\end{prop}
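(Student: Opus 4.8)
The plan is to compute the ramification filtration of $L/F$ directly and then apply Proposition~\ref{prop:1.1}. Since the ramification index is $p$ and $F/\mathbb{Q}_p$ is unramified, the extension $L/F$ is totally and wildly ramified of degree $p$, so $G(L/F)_0$ has order $p$. The key point is to pin down the largest integer $t$ with $G(L/F)_t\neq 1$; weak ramification is the assertion $G(L/F)_2=1$, i.e. $t\leq 1$, and combined with $G(L/F)_0=G(L/F)_1$ (which is forced once $|G(L/F)_0|=p$ is prime and $t\geq 1$) Proposition~\ref{prop:1.1} gives $v_L(\mathfrak{D}_{L/F})=\sum_{n\geq 0}(|G_n|-1)=(p-1)+(p-1)=2(p-1)$. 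So the entire content is: \emph{show $t=1$}, equivalently $G(L/F)_1=G(L/F)_0$ but $G(L/F)_2=1$; note $t\geq 1$ automatically since the extension is wild.

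First I would reduce to a Kummer/Lubin–Tate description. Because $F/\mathbb{Q}_p$ is unramified with $p$ odd, $F$ contains the $p-1$st roots of unity but not $\zeta_p$; adjoining $\zeta_p$ gives a tame extension $F(\zeta_p)/F$ of degree $p-1$ which does not affect wild ramification data (by Lemma~\ref{lem:4.2}(a), passing to the compositum with an unramified-after-tame twist preserves the ramification groups in the relevant range — more carefully, one base-changes to $F(\zeta_p)$, where $L(\zeta_p)/F(\zeta_p)$ is again degree $p$ and totally ramified, and the $G_n$ for $n\geq 1$ are unchanged since $F(\zeta_p)/F$ is tame). Over $F(\zeta_p)$ the degree-$p$ extension is Kummer, $L(\zeta_p)=F(\zeta_p)(\beta^{1/p})$ for some $\beta$, and one may normalize $\beta=1+\pi^{m}u$ with $u$ a unit and $\pi$ a uniformizer of $F(\zeta_p)$; here $v(\zeta_p-1)=1$ in $F(\zeta_p)$ since that extension is totally tamely ramified of degree $p-1$ over the unramified field $F$, wait — actually $v_{F(\zeta_p)}(p)=p-1$ and $v_{F(\zeta_p)}(\zeta_p-1)=1$. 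The standard computation of the conductor of a Kummer extension of degree $p$ then gives the break: for the extension to be ramified but weakly so, one needs $m=1$, i.e. $\beta\equiv 1\pmod{\mathfrak P}$ but $\beta\not\equiv 1\pmod{\mathfrak P^{2}}$ after optimal choice of representative, and this is exactly what the Lubin–Tate description of $F_{\pi,n}$ supplies: the first Lubin–Tate layer $F_{p,1}/F$ has its unique break at $1$.

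Concretely, the cleanest route: invoke the structure of Lubin–Tate extensions for the uniformizer $\pi=p$. The first division field $F_{p,1}/F$ is totally ramified of degree $q_F-1$ (tame), and $F_{p,2}/F_{p,1}$ is totally ramified of degree $q_F^{\,p-1}$ wait, I should be careful — $F_{p,n}/F$ has degree $q_F^{\,n-1}(q_F-1)$ and $\mathrm{Gal}(F_{p,n}/F)\cong(\mathcal O_F/p^n)^\times$. The wild inertia first appears at level $2$, and the ramification filtration of $F_{p,2}/F_{p,1}$ (degree $q_F$, an elementary abelian $p$-group) has a single break, located at $1$ in the upper numbering, hence — since the subextension $L/F$ sits inside $F_{p,2}$ after the tame twist — the lower-numbering break of $L/F$ is also $1$. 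This yields $G(L/F)_1=G(L/F)_0$ (order $p$) and $G(L/F)_2=1$; in particular $L/F$ is weakly ramified, and Proposition~\ref{prop:1.1} gives $v_L(\mathfrak D_{L/F})=(p-1)+(p-1)=2(p-1)$.

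The main obstacle is bookkeeping the ramification-group indices correctly across the tame base change $F\rightsquigarrow F(\zeta_p)$ (or, equivalently, correctly reading off the Lubin–Tate break and its position in lower numbering for the specific subextension $L$ rather than the full division field). Herbrand's theorem and the Hasse–Arf property for the abelian extension $F_{p,2}/F$ handle the translation between upper and lower numbering, but one must check that $L/F$, being a quotient of $\mathrm{Gal}(F_{p,2}/F)$, inherits the single break at $1$ and not a higher one — this is where the hypotheses "$F/\mathbb{Q}_p$ unramified" and "$p$ odd" are genuinely used (they force $v_F(p)=1$ so that the Lubin–Tate conductor computation lands exactly at break $=1$, and oddness avoids the anomalous $p=2$ behaviour). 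Once that single fact is in hand, the different computation is immediate from Proposition~\ref{prop:1.1}.
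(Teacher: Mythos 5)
Your overall strategy (pin down the unique ramification break of the wild inertia and read off the different from Proposition~\ref{prop:1.1}) is the right shape, but two steps in your reduction do not work as written. First, the bookkeeping claim that the lower-numbering groups $G(L/F)_n$ for $n\geq 1$ are ``unchanged'' under the base change $F\rightsquigarrow F(\zeta_p)$ is false: $F(\zeta_p)/F$ is \emph{totally tamely ramified} of degree $p-1$, not unramified, so Lemma~\ref{lem:4.2}(a) does not apply, and a totally ramified tame base change of degree $d$ multiplies the positive lower-numbering breaks by $d$. (Concretely, the degree-$p$ subfield of $\mathbb{Q}_p(\zeta_{p^2})$ over $\mathbb{Q}_p$ has its break at $1$, whereas $\mathbb{Q}_p(\zeta_{p^2})/\mathbb{Q}_p(\zeta_p)$ has its break at $p-1$.) Second, and more seriously, the step ``the subextension $L/F$ sits inside $F_{p,2}$ after the tame twist'' is not a hypothesis of the proposition --- the statement concerns an \emph{arbitrary} finite Galois $L/F$ of ramification index $p$ --- and placing $L$ inside $F^{nr}F_{p,2}$ is, via the conductor--ramification dictionary, essentially \emph{equivalent} to the weak ramification you are trying to prove. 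To make this non-circular you would need an independent argument that the conductor of the degree-$p$ wildly ramified piece divides $\mathfrak{p}^2$, e.g.\ by showing $(U_F^{(1)})^p=U_F^{(2)}$, which holds because $e_F=1<p-1$; your Kummer normalization $\beta=1+\pi^m u$ with $m=1$ gestures at this but then defers back to the Lubin--Tate containment, which begs the question. You would also need to first reduce to the totally ramified case, since the hypothesis is on the ramification index, not the degree.

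For comparison, the paper's proof avoids all of this machinery: since $|G_0|=p$ is prime and the extension is wild, $G_1=G_0$ automatically; if $G_2$ were nontrivial then Proposition~\ref{prop:1.1} would force $v_L(\mathfrak{D}_{L/F})\geq 3(p-1)$, contradicting the standard bound $v_L(\mathfrak{D}_{L/F})\leq e-1+v_L(e)=p-1+p=2p-1$ (valid since $v_L(p)=p$ when $F/\mathbb{Q}_p$ is unramified), because $3(p-1)>2p-1$ exactly when $p>2$. This five-line argument is where the hypotheses ``$F/\mathbb{Q}_p$ unramified'' and ``$p$ odd'' genuinely enter, and it requires no class field theory at all.
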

\begin{proof}Let $G(L/F)_n$ denote the $n$-th ramification group of $L/F$. By hypothesis, we have $|G(L/F)_0|=p$. We also know that $|G(L/F)_1|\neq 1$ since $L/F$ is wildly ramified, whence $|G(L/F)_1|=p$ also. Now, suppose on the contrary that $|G(L/F)_2|\neq 1$. Then $|G(L/F)_2|=p$ and Proposition~\ref{prop:1.1} implies that
\[
v_L(\mathfrak{D}_{L/F})=\sum_{n=0}^{\infty}(G(L/F)_n-1)\geq 3(p-1).
\]
But we also know from \cite[Chapter III Theorem 2.5]{N} that
\[
v_L(\mathfrak{D}_{L/F})\leq p-1+v_L(p),
\]
where $v_L(p)=p$ since $F/\mathbb{Q}_p$ is unramified and $L/F$ has ramification index $p$. But then $2p-1\leq 3(p-1)$, so $p=2$ and this is a contradiction. This shows that $G(L/F)_2$ is trivial, which means that $L/F$ is weakly ramified, and the equality $v_{L}(\mathfrak{D}_{L/F})=2(p-1)$ follows from Proposition~\ref{prop:1.1}.
\end{proof}

In this rest of this section, we prove Proposition~\ref{prop:12.2} below, which is analogous to Proposition~\ref{prop:8.4}. Recall from (\ref{eq:7.1}) that $\Lambda(FG):=\mbox{Map}_{\Omega_F}(G(-1),F^c)$. 

\begin{prop}\label{prop:12.2}Let $h\in\mbox{Hom}(\Omega_F,G)$ be such that $F^h/F$ has ramification index $p$ and that $F^h\subset F_{p,2}$. Under the assumptions that $F/\mathbb{Q}_p$ is unramified and that $p$ is odd, there is $a\in F_h$ such that $A_h=\mathcal{O}_FG\cdot a$ and
\[
r_G(a)=\Theta^t_*(g)\hspace{1cm}\mbox{for some }g\in\Lambda(FG)^\times.
\]
\end{prop}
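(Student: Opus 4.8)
The plan is to mimic the proof of Proposition~\ref{prop:8.4}, replacing the cyclic tame extension $F(\pi^{1/e})$ by the degree-$p$ wildly ramified piece sitting inside the Lubin-Tate tower $F_{p,2}$. First I would reduce to the case where $F^h/F$ is exactly a cyclic extension of degree $p$: since $h\in\mbox{Hom}(\Omega_F,G)$ with $F^h/F$ of ramification index $p$ and $F^h\subset F_{p,2}$, and $G$ has odd order, $F^h/F$ is cyclic; because $F/\mathbb{Q}_p$ is unramified, Proposition~\ref{prop:12.1} tells us $F^h/F$ is weakly ramified with $v_{F^h}(\mathfrak{D}_{F^h/F})=2(p-1)$, so $A^h=\mathfrak{P}^{-(p-1)}$ where $\mathfrak{P}$ is the maximal ideal of $\mathcal{O}^h$. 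The next step is to pin down an explicit $\mathcal{O}_F\mbox{Gal}(F^h/F)$-generator $\alpha$ of $A^h$. Here the extra input over the tame case is that $F^h/F$ need not be generated by a radical; instead I would use the structure of Lubin-Tate division fields — $F_{p,2}/F_{p,1}$ is totally ramified of degree $p$ (for $F/\mathbb{Q}_p$ unramified the relevant torsion module gives a uniformizer $\lambda$ with a known minimal polynomial), and one extracts a uniformizer $\Pi$ of $F^h$ together with an explicit normal basis element for $A^h$ built from powers of $\Pi$, exactly paralleling the averaged sum $\alpha=\frac1p\sum_{k}\Pi^{k+(1-p)/2}$ that appears in Proposition~\ref{prop:8.4}. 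The oddness of $p$ guarantees $1/p\in\mathcal{O}_F^\times$, so $\alpha\in A^h$, and an Vandermonde-type argument (the analogue of~(\ref{eq:8.2})) shows $A^h=\mathcal{O}_F\mbox{Gal}(F^h/F)\cdot\alpha$.

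Once $\alpha$ is in hand, I would transport it to a generator $a\in F_h$ by the same recipe as before, setting $a(t)=\omega(\alpha)$ when $t=h(\omega)$ and $a(t)=0$ otherwise; this lies in $F_h$ and satisfies $A_h=\mathcal{O}_FG\cdot a$ because $A_h=\mbox{Map}_{\Omega_F}({}^hG,A^h)$ and $\alpha$ generates $A^h$ over $\mathcal{O}_F\mbox{Gal}(F^h/F)$. The remaining task is to identify $r_G(a)$ with $\Theta^t_*(g)$ for a suitable $g\in\Lambda(FG)^\times$. For this I would compute $\textbf{r}_G(a)(\chi)=\sum_{\omega}\omega(\alpha)\chi(h(\omega))^{-1}$ for each $\chi\in\hat{G}_F$, and show it equals a product over $s\in G$ of $g(s)^{\langle\chi,s\rangle_*}$ for an appropriate $\Omega_F$-equivariant map $g\colon G(-1)\to F^c$. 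The values $g(s)$ should come out to be essentially the conjugates $\sigma^i(\Pi)$ (or a ratio thereof) in the Lubin-Tate setting, playing the role that $\pi$ played in $f_s$; the key point is that the $\langle\chi,s\rangle_*$-weighted products of these conjugates collapse, via the same Gauss-sum style cancellation as in~(\ref{eq:8.2}), to the single value $\textbf{r}_G(a)(\chi)$. That $g$ preserves $\Omega_F$-action, hence lies in $\Lambda(FG)^\times$, should follow from the Galois-equivariance of the Lubin-Tate uniformizer together with the definition of the twisted action on $G(-1)$.

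The main obstacle I expect is the middle step: producing the explicit normal basis generator $\alpha$ for $A^h$ over $\mathcal{O}_F\mbox{Gal}(F^h/F)$ and, in tandem, the explicit map $g$. In the tame case this was transparent because $\mathcal{O}^h=\mathcal{O}_F[\Pi]$ with $\Pi$ a radical and the Galois action multiplied $\Pi$ by a root of unity; in the wild Lubin-Tate case the Galois action on a uniformizer is given by the formal group law, not by a root of unity, so the diagonalization that yielded~(\ref{eq:8.2}) is less automatic and one must work with the actual $2$-torsion (or appropriate torsion) of the Lubin-Tate formal group over the unramified base $F$ to get clean formulas. I anticipate this is where most of the genuine content lies, and where I would need the hypotheses $F/\mathbb{Q}_p$ unramified and $p$ odd most essentially — the former to keep $v_{F^h}(\mathfrak{D}_{F^h/F})$ equal to $2(p-1)$ so that $A^h$ has the right shape, and the latter both to invert $p$ and to ensure $A^h$ exists. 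Everything after that — assembling $a$ from $\alpha$, verifying $A_h=\mathcal{O}_FG\cdot a$, and matching characters — is routine bookkeeping of the same flavour as the proof of Proposition~\ref{prop:8.4}.
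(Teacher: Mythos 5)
Your plan has the right architecture (reduce to a cyclic degree-$p$ piece, build an explicit $\alpha$, transport to $a\in F_h$, match characters), and you correctly locate where the real difficulty sits --- but you leave that difficulty unresolved, and it is precisely the content of the paper's proof. Two ideas are missing. First, the paper does not try to work with a Lubin--Tate uniformizer $\Pi$ of $L:=F^h$ and its formal-group Galois action at all. Instead it adjoins $\zeta=\zeta_p$ and invokes a result of Pickett (Lemma~\ref{lem:12.3}): $L(\zeta)/F(\zeta)$ is a Kummer extension $F(\zeta,x^{1/p})$ with $x\in F(\zeta)$ and $v_{L(\zeta)}(x^{1/p}-1)=1$. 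The generator is then
\[
\alpha=\frac{1}{p}\sum_{k\in\mathbb{F}_p}\prod_{i\in\mathbb{F}_p^\times}y_i^{c(ik)},\qquad y_i=\omega_i(x^{1/p}),
\]
where the $\omega_i$ run over $\mbox{Gal}(L(\zeta)/L)$; this is \emph{not} of the shape $\frac1p\sum_k\Pi^{k+(1-p)/2}$ you propose, and the corresponding $g\in\Lambda(FG)^\times$ takes the values $g(t^{c(i)})=x_i=\omega_i(x)$ (conjugates of the Kummer element over $F$, not conjugates of a uniformizer under $\mbox{Gal}(L/F)$). Descent of $\alpha$ to $L$ comes from the $\omega_j$ permuting the summands, and the character identity $\textbf{r}_G(a)(\chi)=\prod_i y_i^{c(ik)}=\Theta^t_*(g)(\chi)$ follows from the same Gauss-sum cancellation you anticipate, but applied to the $\zeta$-eigenvalues coming from the Kummer description rather than from a radical of $\pi$.

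Second, your middle step --- proving $A^h=\mathcal{O}_F\mbox{Gal}(L/F)\cdot\alpha$ by a Vandermonde-type argument --- is exactly the step that does not go through in the wild case, for the reason you yourself give: the matrix expressing the conjugates $\tau^{c(j)}(\alpha)$ in an $\mathcal{O}_F$-basis of $A^h$ is no longer a unit times a root-of-unity Vandermonde matrix. The paper avoids this entirely. It proves only the containment $\alpha\in A_{L/F}$, by a valuation estimate ($v_L(\alpha)\geq 1-p=v_L(A_{L/F})$, using $v_L(p)=p$ and $v_{L(\zeta)}(y_i^n-1)\geq 1$), hence $a\in A_h$. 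Generation is then obtained for free from the resolvend identity: once $r_G(a)=\Theta^t_*(g)$ is established, Proposition~\ref{prop:7.5} gives $\textbf{r}_G(a)\textbf{r}_G(a)^{[-1]}=1\in(\mathcal{O}_FG)^\times$, and Corollary~\ref{cor:2.9} upgrades $a\in A_h$ to $A_h=\mathcal{O}_FG\cdot a$. Without this self-duality argument (or the Kummer descent feeding into it), your outline stalls at the point you flag as the ``main obstacle.''
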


Write $L:=F^h$ and $\zeta=\zeta_{p,F}$ for the chosen primitive $p$-root of unity in $F^c$.

\begin{lem}\label{lem:12.3} There exists $x\in F(\zeta)$ such that $v_{L(\zeta)}(x^{1/p}-1)=1$ and
\[
L(\zeta)=F(\zeta,x^{1/p}).
\]
\end{lem}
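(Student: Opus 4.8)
The plan is to identify $L(\zeta)=L(\zeta_{p,F})$ explicitly, reduce the assertion to the choice of a good Kummer generator for the cyclic degree $p$ extension $L(\zeta)/F(\zeta)$, and then read off its shape from a ramification computation.

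First I would fix the shape of $L(\zeta)/F(\zeta)$. Since $L=F^h\subset F_{p,2}$ and Lubin--Tate division fields are totally ramified over the base, $L/F$ is totally ramified; hence $[L:F]=p$ equals its ramification index, and $L/F$ is cyclic and wildly ramified. On the other hand $\zeta=\zeta_{p,F}\notin F$, because $F/\mathbb{Q}_p$ is unramified while $\mathbb{Q}_p(\zeta)/\mathbb{Q}_p$ is totally ramified, so $F(\zeta)/F$ is tamely and totally ramified of degree $p-1$. As $\gcd(p,p-1)=1$, the fields $L$ and $F(\zeta)$ are linearly disjoint over $F$ and their compositum is totally ramified of degree $p(p-1)$; consequently $L(\zeta)=L\cdot F(\zeta)$ is cyclic, totally and wildly ramified of degree $p$ over $F(\zeta)$, while $L(\zeta)/L$ is tamely and totally ramified of degree $p-1$. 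Since $F(\zeta)$ contains the primitive $p$-th root of unity $\zeta$, Kummer theory provides $x_0\in F(\zeta)^\times$, unique modulo $(F(\zeta)^\times)^p$, with $L(\zeta)=F(\zeta)\bigl(x_0^{1/p}\bigr)$; the task is to produce a representative of the form $1+(\text{uniformizer})$.

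The crux is to determine the unique ramification break of $L(\zeta)/F(\zeta)$. By Proposition~\ref{prop:12.1} the extension $L/F$ is weakly ramified, so its break is $1$ in both lower and upper numbering. Feeding this into Herbrand's theorem for the tower $L(\zeta)\supset L\supset F$, namely $\varphi_{L(\zeta)/F}=\varphi_{L/F}\circ\varphi_{L(\zeta)/L}$ with $\varphi_{L(\zeta)/L}(u)=u/(p-1)$ and $\varphi_{L/F}$ the broken line of slopes $1$ then $1/p$ with kink at $1$, one finds that the lower ramification breaks of $L(\zeta)/F$ are at $0$ and $p-1$; intersecting the lower ramification groups of $L(\zeta)/F$ with $\mbox{Gal}(L(\zeta)/F(\zeta))$, the unique subgroup of order $p$, then shows that the break of $L(\zeta)/F(\zeta)$ is $p-1$, equivalently $v_{L(\zeta)}\bigl(\mathfrak{D}_{L(\zeta)/F(\zeta)}\bigr)=p(p-1)$. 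I expect this step to be the main obstacle: one must keep straight the direction of the Herbrand composition and the passage between lower and upper numbering, and also carefully justify the totally ramified and degree claims used above.

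Finally I would normalize $x_0$ in the standard way. Multiplying by a $p$-th power, we may assume $v_{F(\zeta)}(x_0)=0$: otherwise, with $r:=v_{F(\zeta)}(x_0)\in\{1,\dots,p-1\}$, the element $x_0^{1/p}$ has valuation $r$ in $L(\zeta)$, and computing $v_{L(\zeta)}(\sigma\pi-\pi)$ for the uniformizer $\pi=(x_0^{1/p})^{j}\pi_{F(\zeta)}^{k}$ with $jr+kp=1$ gives break $p$, contradicting the previous paragraph. Since the residue field of $F(\zeta)$ has order prime to $p$, we may further assume $x_0=1+a$ with $a\in\mathfrak{p}_{F(\zeta)}$; put $m:=v_{F(\zeta)}(a)$. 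Analysing the identity $(1+w)^p=1+a$ shows that $1\le m\le p-1$ (larger $m$ would make $x_0$ a $p$-th power or the extension unramified), that $\theta_0:=x_0^{1/p}$ satisfies $v_{L(\zeta)}(\theta_0-1)=m$, and --- by computing $v_{L(\zeta)}(\sigma\pi-\pi)$ for a uniformizer $\pi$ built from $\theta_0-1$ --- that the break of $L(\zeta)/F(\zeta)$ equals $p-m$. Comparing with the value $p-1$ obtained above forces $m=1$. Thus $x:=x_0=1+a$ with $a$ a uniformizer of $F(\zeta)$ does the job: $L(\zeta)=F(\zeta,x^{1/p})$ and $v_{L(\zeta)}(x^{1/p}-1)=m=1$.
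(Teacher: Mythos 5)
Your proof is correct, but it takes a genuinely different route from the paper: the paper does not prove Lemma~\ref{lem:12.3} internally at all --- it simply cites Pickett \cite{P}, where the Kummer generator $x$ is produced by an explicit construction (via Dwork's power series attached to the Lubin--Tate formal group), and only the two properties recorded in the lemma are used afterwards. You instead give a self-contained existence argument: you compute the unique ramification break of $L(\zeta)/F(\zeta)$ to be $p-1$ (equivalently $v_{L(\zeta)}(\mathfrak{D}_{L(\zeta)/F(\zeta)})=p(p-1)$) from the weak ramification of $L/F$ supplied by Proposition~\ref{prop:12.1} together with Herbrand transitivity for the tower $L(\zeta)\supset L\supset F$, and then match this against the standard dictionary between the break of $F(\zeta)\bigl((1+a)^{1/p}\bigr)/F(\zeta)$ and $v_{F(\zeta)}(a)$. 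Both the break computation ($G_t$ equal to the order-$p$ subgroup for $0<t\le p-1$, trivial after) and the Kummer normalization (break $=p-m$, forcing $m=1$) check out. What your approach buys is generality and independence from \cite{P}: it works for any totally, wildly and weakly ramified cyclic degree-$p$ extension of an unramified $F$, with $L\subset F_{p,2}$ used only to guarantee total ramification; what it loses is the explicitness of Pickett's $x$, which is irrelevant for the paper's purposes. Two small blemishes: the phrase ``the residue field of $F(\zeta)$ has order prime to $p$'' is a slip --- that order is a power of $p$; what you need is that the multiplicative group of the residue field has order $q-1$ prime to $p$, so Teichm\"uller units are $p$-th powers and every unit class modulo $(F(\zeta)^\times)^p$ meets $1+\mathfrak{p}_{F(\zeta)}$. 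And the exclusion of $m\ge p$ is asserted rather than proved; it follows from the standard fact that, since $v_{F(\zeta)}(p)=p-1$, units in $1+\mathfrak{p}_{F(\zeta)}^{m}$ with $m>p$ are $p$-th powers and $m=p$ yields an unramified (or trivial) extension, contradicting total ramification. Neither affects the correctness of the argument.
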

\begin{proof}See \cite[Section 3 and the discussion following Lemma 8]{P}. The assumptions that $F/\mathbb{Q}_p$ is unramified with $p$ odd and $L\subset F_{p,2}$ are required.
\end{proof}

We summarize the set-up in the diagram below, where the numbers indicate the degrees of the extensions.
\[
\begin{tikzpicture}[baseline=(current bounding box.center)]
\node at (0,0) [name=F] {$F$};
\node at (5,1) [name=F'] {$F(\zeta)$};
\node at (0,2.5) [name=L] {$L$};
\node at (5,3.5) [name=L'] {$L(\zeta)$};
\node at (6.9,3.5) {$=F(\zeta,x^{1/p})$};
\node at (11.1,1.5) {$v_{L(\zeta)}(x^{1/p}-1)=1$};
\node at (10,2.5) {$L:=F^h$};
\path[font=\small]
(F') edge node[auto] {$p-1$} (F)
(F) edge node[auto] {$p$} (L)
(L') edge node[above] {$p-1$} (L)
(L') edge node[auto] {$p$} (F');
\end{tikzpicture}
\]

\begin{definition}\label{def:12.4}Let $\mathbb{F}_p:=\mathbb{Z}/p\mathbb{Z}$. For each $i\in\mathbb{F}_p$, let $c(i)\in\{\frac{1-p}{2},\dots,\frac{p-1}{2}\}$ be the unique element which represents $i$, and if $i\in\mathbb{F}_p^\times$ we write $i^{-1}$ for the multiplicative inverse of $i$ in $\mathbb{F}_p^\times$. For each $i\in\mathbb{F}_p^\times$, define
\[
\omega_i\in\mbox{Gal}(L(\zeta)/L);\hspace{1em}\omega_i(\zeta):=\zeta^{c(i^{-1})}.
\]
Moreover, define $x_{i}:=\omega_{i}(x)$ and
\[
x_{i}^{1/p}:=\omega_{i}(x^{1/p}),
\]
which is clearly a $p$-th root of $x_i$. We will also write $y_i=x_i^{1/p}$.
\end{definition}

Consider the element
\[
\alpha:=\frac{1}{p}\left(\sum_{k\in\mathbb{F}_p}\prod_{i\in\mathbb{F}_p^\times}y_i^{c(ik)}\right)\\
=\frac{1}{p}\left(1+\prod_{i\in\mathbb{F}_p^\times}y_i^{c(i)}+\cdots+\prod_{i\in\mathbb{F}_p^\times}y_{i}^{c(i(p-1))}\right).
\]
We will show that $a\in\mbox{Map}(G,F^c)$ defined by
\begin{equation}\label{eq:12.1}
a(s):=\begin{cases}
\omega(\alpha) & \mbox{if }s=h(\omega)\mbox{ for }\omega\in\Omega_F\\
0 & \mbox{otherwise}
\end{cases}
\end{equation}
satisfies the conclusion of Proposition~\ref{prop:12.2}. The definition of $\alpha$ is motivated by the definition of $g$ in (\ref{eq:12.3}), the computation (\ref{eq:12.4}), and the formula (\ref{eq:6.7}).

\begin{lem}\label{lem:12.5}We have $\alpha\in L$.
\end{lem}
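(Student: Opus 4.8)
The plan is to show that the element $\alpha$ is fixed by $\mathrm{Gal}(L(\zeta)/L)$, since that Galois group is precisely the obstruction to $\alpha$ lying in $L$ (we already have $\alpha\in L(\zeta)$ because each $y_i=x_i^{1/p}$ lies in $L(\zeta)$, as $x_i=\omega_i(x)\in F(\zeta)$ and $L(\zeta)=F(\zeta,x^{1/p})$ is Galois over $F$). The group $\mathrm{Gal}(L(\zeta)/L)$ is cyclic of order $p-1$, and by construction the elements $\omega_i$ for $i\in\mathbb{F}_p^\times$ introduced in Definition~\ref{def:12.4} run over all of it, since $i\mapsto c(i^{-1})$ is a bijection $\mathbb{F}_p^\times\to\{1,\dots,p-1\}$ modulo $p$ and each nontrivial power of $\zeta$ determines an element of the group. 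So it suffices to check that $\omega_j(\alpha)=\alpha$ for every $j\in\mathbb{F}_p^\times$.

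First I would record how $\omega_j$ acts on the building blocks. By definition $\omega_j(y_i)=\omega_j\omega_i(x^{1/p})$. The key point is that composition in $\mathrm{Gal}(L(\zeta)/L)$ corresponds, under $i\mapsto\omega_i$, to multiplication in $\mathbb{F}_p^\times$: I would verify that $\omega_j\omega_i=\omega_{ij}$ (equivalently $\omega_j\omega_i(\zeta)=\zeta^{c((ij)^{-1})}$), which is immediate from $\omega_i(\zeta)=\zeta^{c(i^{-1})}$ once one is careful that the exponents are being read in $\mathbb{F}_p$. Hence $\omega_j(y_i)=y_{ij}$, at least up to the ambiguity in choosing $p$-th roots; here one must check the chosen radicals are compatible, i.e. that $\omega_j(y_i)$ is literally the element we called $y_{ij}=\omega_{ij}(x^{1/p})$, which holds because $\omega_j\omega_i$ and $\omega_{ij}$ are the same automorphism and $y$'s are defined by applying automorphisms to the single fixed radical $x^{1/p}$.

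Now apply $\omega_j$ to the $k$-th term $\prod_{i\in\mathbb{F}_p^\times}y_i^{c(ik)}$ of the sum defining $p\alpha$. Using $\omega_j(y_i)=y_{ij}$ and reindexing $i'=ij$ (so $i=i'j^{-1}$), this term becomes $\prod_{i'\in\mathbb{F}_p^\times}y_{i'}^{c(i'j^{-1}k)}$, which is exactly the term indexed by $k'=j^{-1}k$ in the original sum. As $k$ runs over $\mathbb{F}_p$ so does $k'=j^{-1}k$, so $\omega_j$ permutes the $p$ terms of the sum among themselves; therefore $\omega_j(p\alpha)=p\alpha$, and since $p\in F^\times\subset L^\times$ we get $\omega_j(\alpha)=\alpha$. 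As this holds for all $j\in\mathbb{F}_p^\times$, we conclude $\alpha\in L$.

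The main obstacle I anticipate is purely bookkeeping rather than conceptual: keeping the exponents $c(ik)\in\{\tfrac{1-p}{2},\dots,\tfrac{p-1}{2}\}$ straight as integers versus as residues mod $p$ (the function $c$ is not additive or multiplicative on the nose, only after reduction), and confirming that the chosen $p$-th roots are consistent so that the identity $\omega_j(y_i)=y_{ij}$ holds as an equation in $L(\zeta)$ and not merely up to a $p$-th root of unity. Since $\zeta\notin L$, a spurious factor of $\zeta$ would actually matter, so this compatibility check is the place to be careful; it is handled by observing that all $y_i$ are obtained from the one fixed radical $x^{1/p}$ by Galois automorphisms, so $\omega_j(y_i)=(\omega_j\omega_i)(x^{1/p})=\omega_{ij}(x^{1/p})=y_{ij}$ with no ambiguity at all.
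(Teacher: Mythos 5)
Your proposal is correct and follows essentially the same route as the paper: reduce to invariance under $\mathrm{Gal}(L(\zeta)/L)$, use $\omega_j(y_i)=y_{ji}$ (via $c(i^{-1})c(j^{-1})\equiv c((ji)^{-1})\pmod p$), and observe that $\omega_j$ permutes the summands by sending the term indexed by $k$ to the one indexed by $j^{-1}k$. Your extra care about the compatibility of the chosen radicals (all $y_i$ being images of the single radical $x^{1/p}$ under automorphisms) is a correct and worthwhile point that the paper leaves implicit.
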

\begin{proof}By definition, we have $y_i\in L(\zeta)$ for all $i\in\mathbb{F}_p^\times$. So, clearly $\alpha\in L(\zeta)$ and we have $\alpha\in L$ if and only if $\alpha$ is fixed by the action of $\mbox{Gal}(L(\zeta)/L)$.

Now, a non-trivial element in $\mbox{Gal}(L(\zeta)/L)$ is equal to $\omega_{j}$ for some $j\in\mathbb{F}_p^\times$. Moreover, notice that $c(i^{-1})c(j^{-1})=c((ji)^{-1})$ and hence $\omega_j(y_i)=y_{ji}$ for all $i\in\mathbb{F}_p^\times$. Hence, for each $k\in\mathbb{F}_p$ we have
\[
\omega_{j}\left(\prod_{i\in\mathbb{F}_p^\times}y_{i}^{c(ik)}\right)
=\prod_{i\in\mathbb{F}_p^\times}y_{ji}^{c(ik)}\\
=\prod_{i\in\mathbb{F}_p^\times}y_{i}^{c(i\cdot j^{-1}k)}.
\]
This implies that $\omega_j$ permutes the summands
\[
1,\prod_{i\in\mathbb{F}_p^\times}y_{i}^{c(i)},\dots,\prod_{i\in\mathbb{F}_p^\times}y_{i}^{c(i(p-1))}
\]
in the definition of $\alpha$. This shows that $\omega_j(\alpha)=\alpha$ and so $\alpha\in L$.
\end{proof}

Next, we use a valuation argument to show that $\alpha\in A_{L/F}$.

\begin{lem}\label{lem:12.6}For all $i\in\mathbb{F}_p^\times$, we have $v_{L(\zeta)}(y_i)=0$ and 
\[
v_{L(\zeta)}(y_i^n-1)\geq 1\hspace{1cm}\mbox{for all }n\in\mathbb{Z}.
\]
\end{lem}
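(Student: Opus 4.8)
The plan is to establish both claims of Lemma~\ref{lem:12.6} by tracking valuations along the tower $L(\zeta)/F$, using Lemma~\ref{lem:12.3} as the foundation. First I would record the basic numerology: since $F/\mathbb{Q}_p$ is unramified and $L/F$ has ramification index $p$, while $F(\zeta)/F$ has ramification index $p-1$ (totally ramified, being the cyclotomic extension), the compositum $L(\zeta)/F$ has ramification index $p(p-1)$; in particular $v_{L(\zeta)}(p)=p(p-1)$ and $v_{L(\zeta)}(\zeta-1)=p$. From $L(\zeta)=F(\zeta,x^{1/p})$ and $v_{L(\zeta)}(x^{1/p}-1)=1$, the element $y=x^{1/p}$ is a unit with $v_{L(\zeta)}(y)=0$, hence $v_{L(\zeta)}(x)=0$ as well.

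Next I would handle the conjugates. Each $y_i=\omega_i(x^{1/p})$ for $\omega_i\in\mathrm{Gal}(L(\zeta)/L)$; since $\omega_i$ fixes $L$ and the extension $L(\zeta)/L$ has ramification index $p-1$ prime to $p$, applying $\omega_i$ preserves $v_{L(\zeta)}$ up to the corresponding automorphism but in any case $v_{L(\zeta)}(y_i)=v_{L(\zeta)}(\omega_i(y))=v_{L(\zeta)}(y)=0$, since $v_{L(\zeta)}$ is $\mathrm{Gal}(L(\zeta)/F)$-invariant. This gives the first assertion. For the second, it suffices to treat $n=1$, i.e. $v_{L(\zeta)}(y_i-1)\geq 1$, because $y_i^n-1=(y_i-1)(y_i^{n-1}+\cdots+1)$ for $n\geq 1$ and the second factor is a sum of $n$ units (hence has valuation $\geq 0$), while for $n\leq 0$ one writes $y_i^n-1=-y_i^n(y_i^{-n}-1)$ and uses the $n\geq 0$ case together with $v_{L(\zeta)}(y_i)=0$. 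For $n=1$: $y_i-1=\omega_i(y)-1=\omega_i(y-1)+(\omega_i(1)-1)$, and since $\omega_i$ fixes $F\supset\mathbb{Q}$ we have $\omega_i(1)=1$, so $y_i-1=\omega_i(y-1)$, giving $v_{L(\zeta)}(y_i-1)=v_{L(\zeta)}(y-1)=1\geq 1$ by Lemma~\ref{lem:12.3}.

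The main obstacle is making precise that $v_{L(\zeta)}$ is invariant under $\mathrm{Gal}(L(\zeta)/F)$ — this is standard for a Galois extension of local fields, since all valuations extending $v_F$ are conjugate and there is a unique one — and then confirming that $\omega_i$, a priori only an element of $\mathrm{Gal}(L(\zeta)/L)$, actually lies in $\mathrm{Gal}(L(\zeta)/F)$, which is immediate since $\mathrm{Gal}(L(\zeta)/L)\subseteq\mathrm{Gal}(L(\zeta)/F)$. Once invariance is in hand, the rest is the elementary factorization argument above. I would present it as: (1) Galois-invariance of the valuation; (2) $v_{L(\zeta)}(y_i)=0$; (3) reduction of the $y_i^n-1$ claim to $n=1$; (4) the one-line computation $y_i-1=\omega_i(y-1)$ invoking Lemma~\ref{lem:12.3}.
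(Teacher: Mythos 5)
Your proof is correct and follows essentially the same route as the paper's: both reduce everything to the identity $y_i-1=\omega_i(x^{1/p}-1)$ (using that the valuation on $L(\zeta)$ is Galois-invariant) to get $v_{L(\zeta)}(y_i-1)=1$, and then handle general $n$ by the factorization $y_i^n-1=(y_i-1)(y_i^{n-1}+\cdots+1)$ for $n\geq 1$ and by writing $y_i^n-1=-y_i^n(y_i^{-n}-1)$ for $n\leq 0$. The extra numerology about ramification indices in your opening paragraph is harmless but not needed.
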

\begin{proof}Recall that $x^{1/p}-1$ is a uniformizer in $L(\zeta)$. For each $i\in\mathbb{F}_p^\times$, we have
\[
y_i-1=x_i^{1/p}-1=\omega_{i}(x^{1/p}-1),
\]
which show that $v_{L(\zeta)}(y_i-1)=1$ and in particular $v_{L(\zeta)}(y_i)=0$.

Note that the second claim is obvious for $n=0$. For $n\in\mathbb{Z}^+$, we have
\[
v_{L(\zeta)}(y_{i}^{n}-1)=v_{L(\zeta)}(y_{i}-1)+v_{L(\zeta)}(y_{i}^{n-1}+\cdots+y_{i}+1)\geq 1+0.
\]
For $n\in\mathbb{Z}^-$, use the above to deduce that
\[
v_{L(\zeta)}(y_{i}^{n}-1)=v_{L(\zeta)}(y_i^{n})+v_{L(\zeta)}(1-y_i^{-n})\geq 0+1.
\]
This proves the lemma.
\end{proof}

\begin{prop}\label{prop:12.7}We have $\alpha\in A_{L/F}$.
\end{prop}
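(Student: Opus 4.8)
The plan is to reduce the membership $\alpha\in A_{L/F}$ to a single valuation inequality and then verify that inequality by a crude congruence computation in $\mathcal{O}_{L(\zeta)}$, drawing only on Proposition~\ref{prop:12.1} and Lemmas~\ref{lem:12.5} and~\ref{lem:12.6}.

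First I would record, using Proposition~\ref{prop:12.1} and the fact that $A_{L/F}^{2}=\mathfrak{D}_{L/F}^{-1}$, that $v_L(\mathfrak{D}_{L/F})=2(p-1)$ and hence $v_L(A_{L/F})=-(p-1)$; thus $\alpha\in A_{L/F}$ is equivalent to the single condition $v_L(\alpha)\geq-(p-1)$. Writing $\alpha=\frac{1}{p}\sum_{k\in\mathbb{F}_p}\beta_k$ with $\beta_k:=\prod_{i\in\mathbb{F}_p^{\times}}y_i^{c(ik)}$, and noting that $v_L(p)=p$ because $F/\mathbb{Q}_p$ is unramified and $L/F$ has ramification index $p$, the desired inequality becomes $v_L\!\big(\sum_{k\in\mathbb{F}_p}\beta_k\big)\geq 1$. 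By Lemma~\ref{lem:12.5} the element $\sum_{k}\beta_k=p\alpha$ lies in $L$, and the maximal ideal of $\mathcal{O}_L$ is the contraction of the maximal ideal of $\mathcal{O}_{L(\zeta)}$, so it suffices to prove $v_{L(\zeta)}\!\big(\sum_{k\in\mathbb{F}_p}\beta_k\big)\geq 1$.

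For the latter I would invoke Lemma~\ref{lem:12.6}: each $y_i$ is a unit in $\mathcal{O}_{L(\zeta)}$ with $v_{L(\zeta)}(y_i^{n}-1)\geq 1$ for every $n\in\mathbb{Z}$, so every factor $y_i^{c(ik)}$ is $\equiv 1$ modulo the maximal ideal of $\mathcal{O}_{L(\zeta)}$; being a product of such factors, each $\beta_k$ is $\equiv 1$ as well. Summing over the $p$ residues $k\in\mathbb{F}_p$ then gives $\sum_{k\in\mathbb{F}_p}\beta_k\equiv p\equiv 0$ modulo the maximal ideal of $\mathcal{O}_{L(\zeta)}$, the last step because $v_{L(\zeta)}(p)=p(p-1)\geq 1$. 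This gives $v_L\!\big(\sum_{k}\beta_k\big)\geq 1$, hence $v_L(\alpha)=v_L\!\big(\sum_{k}\beta_k\big)-v_L(p)\geq 1-p=-(p-1)$, i.e.\ $\alpha\in A_{L/F}$.

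There is no serious obstacle here: the substantive work has already been carried out in Proposition~\ref{prop:12.1} and Lemmas~\ref{lem:12.3}, \ref{lem:12.5}, and~\ref{lem:12.6}. The one point I would double-check is the valuation bookkeeping in the tower $F\subset L\subset L(\zeta)$ — in particular the identities $v_L(A_{L/F})=-(p-1)$ and $v_L(p)=p$, and the fact that for an element of $L$ divisibility by the maximal ideal of $\mathcal{O}_{L(\zeta)}$ is the same as divisibility by the maximal ideal of $\mathcal{O}_L$. Since we only need $\sum_{k}\beta_k$ to lie in the maximal ideal of $\mathcal{O}_{L(\zeta)}$, and not in a higher power of it, the naive estimate ``$p$ summands, each congruent to $1$'' already suffices, and no finer expansion of the $y_i$ in a uniformizer is required.
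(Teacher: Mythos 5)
Your proof is correct and follows essentially the same route as the paper: reduce to the inequality $v_L(\alpha)\geq 1-p$ using $v_L(A_{L/F})=-(p-1)$ from Proposition~\ref{prop:12.1} and $v_L(p)=p$, then show $v_{L(\zeta)}\bigl(\sum_k\beta_k\bigr)\geq 1$ via Lemma~\ref{lem:12.6} and descend to $v_L$ using Lemma~\ref{lem:12.5}. The paper makes the step ``each $\beta_k\equiv 1$'' explicit by a telescoping expansion of $\prod_i y_i^{c(ik)}-1$, but your observation that units congruent to $1$ modulo the maximal ideal are closed under multiplication is the same computation.
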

\begin{proof}Observe that $L/F$ has ramification index $p$ because it has degree $p$ by hypothesis and $F_{p,2}/F$ is totally ramified. Then, by Proposition~\ref{prop:12.1} we have
\[
v_L(A_{L/F})=1-p.
\]
On the other hand, observe that $v_L(p)=p$ because $F/\mathbb{Q}_p$ is unramified, so
\begin{align*}
v_L(\alpha)&=v_L\left(\sum_{k\in\mathbb{F}_p}\prod_{i\in\mathbb{F}_p^\times}y_i^{c(ik)}\right)
-p\\
&=v_L\left(\sum_{k\in\mathbb{F}_p}\left(\prod_{i\in\mathbb{F}_p^\times}y_i^{c(ik)}-1\right)+p\right)-p\\
&\geq \min\Bigg\{v_L\left(\sum_{k\in\mathbb{F}_p}\left(\prod_{i\in\mathbb{F}_p^\times}y_i^{c(ik)}-1\right)\right), p\Bigg\}-p.
\end{align*}
But identifying $\mathbb{F}_p^\times$ with $\{1,2\dots,p-1\}$, we see that for each $k\in\mathbb{F}_p$ we have
\[
\prod_{i\in\mathbb{F}_p^\times}y_i^{c(ik)}-1
=\sum_{i=1}^{p-2}\left(\left(\prod_{l=i+1}^{p-1}y_l^{c(lk)}\right)(y_i^{c(ik)}-1)\right)+(y_{p-1}^{c((p-1)k)}-1).
\]
It then follows from Lemma~\ref{12.6} that
\[
v_{L(\zeta)}\left(\prod_{i\in\mathbb{F}_p^\times}y_i^{c(ik)}-1\right)\geq 1,
\]
and in particular, we have
\[
v_L\left(\sum_{k\in\mathbb{F}_p}\left(\prod_{i\in\mathbb{F}_p^\times}y_i^{c(ik)}-1\right)\right)\geq 1.
\]
This shows that $v_L(\alpha)\geq 1-p$, whence $\alpha\in A_{L/F}$, as claimed.
\end{proof}

Next, we compute the Galois conjugates of $\alpha$ in $L/F$. Observe that there is a canonical isomorphism
\[
\mbox{Gal}(L(\zeta)/F)\simeq \mbox{Gal}(L/F)\times\mbox{Gal}(F(\zeta)/F),
\]
which restricts to an isomorphism
\begin{equation}\label{eq:12.2}
\mbox{Gal}(L(\zeta)/F(\zeta))\simeq\mbox{Gal}(L/F).
\end{equation}
Let $\tau\in\mbox{Gal}(L/F)$ be the generator which is identified with
\[
\tilde{\tau}\in\mbox{Gal}(L(\zeta)/F(\zeta));\hspace{1em}\tilde{\tau}(x^{1/p}):=\zeta^{-1}x^{1/p}
\]
via (\ref{eq:12.2}). So, the elements $\tau$ and $\tilde{\tau}$ are equal when restricted to $L$.

\begin{prop}\label{prop:12.8}For all $j,k\in\mathbb{F}_p$, we have
\[
\tilde{\tau}^{c(j)}\left(\prod_{i\in\mathbb{F}_p^\times}y_i^{c(ik)}\right)=\zeta^{c(jk)}\cdot\prod_{i\in\mathbb{F}_p^\times}y_i^{c(ik)}.
\]
In particular, this implies that for all $j\in\mathbb{F}_p$ we have
\[
\tau^{c(j)}(a)=\frac{1}{p}\sum_{k\in\mathbb{F}_p}\left(\zeta^{c(jk)}\prod_{i\in\mathbb{F}_p^\times}y_i^{c(ik)}\right).
\]
\end{prop}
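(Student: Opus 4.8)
The plan is to reduce the whole computation to the action of $\tilde\tau$ on a single radical and then multiply out. The one structural fact needed at the outset is that $\mathrm{Gal}(L(\zeta)/F)$ is abelian: $L(\zeta)$ is the compositum of the abelian extensions $L/F$ (recall $L=F^h$, so $\mathrm{Gal}(L/F)\simeq h(\Omega_F)\leq G$) and $F(\zeta)/F$, hence is itself abelian — this is also what underlies the decomposition giving (\ref{eq:12.2}). Consequently $\tilde\tau$ commutes with every $\omega_i$, and $\tilde\tau$ fixes $\zeta$ since $\zeta\in F(\zeta)$.

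First I would compute $\tilde\tau(y_i)$ for $i\in\mathbb{F}_p^\times$. Writing $y_i=\omega_i(x^{1/p})$ and using that $\tilde\tau$ commutes with $\omega_i$,
\[
\tilde\tau(y_i)=\omega_i\bigl(\tilde\tau(x^{1/p})\bigr)=\omega_i\bigl(\zeta^{-1}x^{1/p}\bigr)=\omega_i(\zeta)^{-1}\,\omega_i(x^{1/p})=\zeta^{-c(i^{-1})}y_i.
\]
Since $\tilde\tau$ fixes $\zeta$, iterating gives $\tilde\tau^{m}(y_i)=\zeta^{-m\,c(i^{-1})}y_i$ for all $m\in\mathbb{Z}$, so $\tilde\tau^{c(j)}(y_i)=\zeta^{-c(j)c(i^{-1})}y_i$, and taking the product over $i\in\mathbb{F}_p^\times$ yields
\[
\tilde\tau^{c(j)}\Bigl(\prod_{i\in\mathbb{F}_p^\times}y_i^{c(ik)}\Bigr)=\zeta^{-c(j)\sum_{i\in\mathbb{F}_p^\times}c(i^{-1})c(ik)}\prod_{i\in\mathbb{F}_p^\times}y_i^{c(ik)}.
\]
The heart of the matter is then the reduction of this exponent modulo $p$: for each $i\in\mathbb{F}_p^\times$ one has $c(i^{-1})c(ik)\equiv i^{-1}(ik)\equiv k\pmod p$, whence $\sum_{i\in\mathbb{F}_p^\times}c(i^{-1})c(ik)\equiv(p-1)k\equiv-k\pmod p$, and therefore $-c(j)\sum_{i\in\mathbb{F}_p^\times}c(i^{-1})c(ik)\equiv c(j)k\equiv c(jk)\pmod p$. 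As $\zeta$ is a primitive $p$-th root of unity this identifies $\zeta^{-c(j)\sum_i c(i^{-1})c(ik)}$ with $\zeta^{c(jk)}$, giving the first asserted identity (the case $k=0$ being trivially $1=1$).

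For the second assertion I would simply apply the identity just proved to each of the $p$ summands $\prod_{i\in\mathbb{F}_p^\times}y_i^{c(ik)}$, $k\in\mathbb{F}_p$, occurring in the definition of $\alpha$, using that $\tau=\tilde\tau|_L$ and $\alpha\in L$ (Lemma~\ref{lem:12.5}) to replace $\tilde\tau$ by $\tau$; this gives $\tau^{c(j)}(\alpha)=\frac{1}{p}\sum_{k\in\mathbb{F}_p}\bigl(\zeta^{c(jk)}\prod_{i\in\mathbb{F}_p^\times}y_i^{c(ik)}\bigr)$, as claimed. Apart from the modular exponent computation, the only obstacle is bookkeeping — keeping straight which automorphism acts on $\zeta$ versus on $x^{1/p}$ and the directions of the exponents $c(i^{-1})$ and $c(ik)$ — but once $\tilde\tau(x^{1/p})=\zeta^{-1}x^{1/p}$, the commutativity of $\mathrm{Gal}(L(\zeta)/F)$, and $\omega_i(\zeta)=\zeta^{c(i^{-1})}$ are in hand, the rest is a direct calculation.
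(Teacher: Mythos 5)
Your proposal is correct and follows essentially the same route as the paper: both use the commutativity of $\mathrm{Gal}(L(\zeta)/F)$ to swap $\tilde\tau^{c(j)}$ past $\omega_i$, compute $\tilde\tau^{c(j)}(y_i)=\zeta^{-c(j)c(i^{-1})}y_i$, and then reduce the resulting exponent $-c(j)\sum_i c(i^{-1})c(ik)\equiv c(jk)\pmod p$ to conclude, with the second assertion following summand by summand. The only cosmetic difference is that you compute $\tilde\tau(y_i)$ once and iterate, while the paper applies $\tilde\tau^{c(j)}$ directly, writing $\zeta^{-c(j)}$ as $\zeta^{c(-1)c(j)}$.
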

\begin{proof}Let $j,k\in\mathbb{F}_p$. Since $\mbox{Gal}(L(\zeta)/F)$ is abelian, for any $i\in\mathbb{F}_p^\times$ we have
\begin{align*}
\tilde{\tau}^{c(j)}(y_i)
&=(\tilde{\tau}^{c(j)}\circ\omega_i)(x^{1/p})\\
&=(\omega_i\circ\tilde{\tau}^{c(j)})(x^{1/p})\\
&=\omega_i(\zeta^{c(-1)c(j)}x^{1/p})\\
&=\zeta^{c(i^{-1})c(-1)c(j)}y_i.
\end{align*}
Since $c(i^{-1})c(-1)c(j)c(ik)\equiv -c(jk)$ (mod $p$), we deduce that
\begin{align*}
\tilde{\tau}^{c(j)}\left(\prod_{i\in\mathbb{F}_p^\times}y_i^{c(ik)}\right)
&=\prod_{i\in\mathbb{F}_p^\times}\zeta^{c(i^{-1})c(-1)c(j)c(ik)}y_i^{c(ik)}\\
&=\zeta^{-c(jk)(p-1)}\prod_{i\in\mathbb{F}_p^\times}y_i^{c(ik)}\\
&=\zeta^{c(jk)}\prod_{i\in\mathbb{F}_p^\times}y_i^{c(ik)}.
\end{align*}
This proves the first claim, and the second follows directly from the first.
\end{proof}

\begin{proof}[Proof of Propostion~\ref{prop:12.2}] Let $a\in\mbox{Map}(G,F^c)$ be as in (\ref{eq:12.1}). Since $L=F^h$, clearly $a\in F_h$. In fact, we have $a\in A_h$ because $\alpha\in A_{L/F}$ by Proposition~\ref{prop:12.7}.

Recall that $h$ induces an isomorphism $\mbox{Gal}(L/F)\simeq h(\Omega_F)$. Let $\omega_\tau\in\Omega_F$ be a lift of $\tau$ and $t:=h(\omega_\tau)$. Then $h(\Omega_F)=\langle t\rangle$ and has order $p$. Define
\begin{equation}\label{eq:12.3}
g\in\Lambda(FG)^\times;\hspace{1em}
g(s):=\begin{cases}
x_i & \mbox{if $s=t^{c(i)}$ for $i\in\mathbb{F}_p^\times$}\\
1 & \mbox{otherwise}.
\end{cases}
\end{equation}

First we verify that $g$ indeed preserves $\Omega_F$-action. It will be helpful to recall Definition~\ref{def:7.3}. So let $\omega\in\Omega_F$ and $s\in G(-1)$. If $s\neq t^{c(i)}$ for all $i\in\mathbb{F}_p^\times$, then the same is true for $\omega\cdot s$ because $\langle\omega\cdot s\rangle=\langle s\rangle$. In this case, we have
\[
g(\omega\cdot s)=1=\omega(1)=\omega(g(s)).
\]
If $s=t^{c(i)}$ for some $i\in\mathbb{F}_p^\times$, then $s$ has order $p$. Since $x_i\in F(\zeta)$, it is enough to consider $\omega|_{F(\zeta)}$. If $\omega|_{F(\zeta)}=\mbox{id}_{F(\zeta)}$, then $\omega$ fixes both $x_i$ and the elements of order dividing $p$ in $G$, so
\[
g(\omega\cdot s)=g(s)=\omega(g(s)).
\]
If $\omega|_{F(\zeta)}\neq\mbox{id}_{F(\zeta)}$, then $\omega|_{F(\zeta)}=\omega_j|_{F(\zeta)}$ for some $j\in\mathbb{F}_p^\times$. Since $\omega^{-1}_j(\zeta)=\zeta^{c(j)}$ by definition and $|t|=p$, we see that
\[
\omega\cdot t^{c(i)}
=t^{c(i)\kappa(\omega^{-1})}
=t^{c(i)c(j)}
=t^{c(ji)}.
\]
Using the identify $\omega_j\omega_i=\omega_{ji}$, we deduce that
\begin{align*}
g(\omega\cdot t^{c(i)})
&=g(t^{c(ji)})\\
&=x_{ji}\\
&=\omega_j(x_i)\\
&=\omega(g(t^{c(i)})).
\end{align*}
This proves that $g$ preserves $\Omega_F$-action.
 
Next we check that $\Theta^t_*(g)=r_G(a)$. Given $\chi\in\hat{G}_F$, let $k\in\mathbb{F}_p$ be such that $\chi(t)=\zeta^{c(k)}$. Then, we have $\langle\chi,t^{c(i)}\rangle_*=c(ik)/p$ by Definitions~\ref{def:7.1} and~\ref{def:12.4}. On one hand, we have
\begin{eqnarray}\label{eq:12.4}
\Theta^t_*(g)(\chi)&=&\prod_{s\in G}g(s)^{\langle\chi,s\rangle_*}\\\notag
&=&\prod_{i\in\mathbb{F}_p^\times}x_i^{\langle\chi,t^{c(i)}\rangle_*}\\\notag
&=&\prod_{i\in\mathbb{F}_p^\times}y_i^{c(ik)}.\notag
\end{eqnarray}
On the other hand, by (\ref{eq:12.1}) and Proposition~\ref{prop:12.8}, we have that
\begin{align*}
\textbf{r}_G(a)(\chi)&=\sum_{s\in G}a(s)\chi(s)^{-1}\\
&=\sum_{j\in\mathbb{F}_p}\tau^{c(j)}(\alpha)\chi(t^{c(j)})^{-1}\\
&=\frac{1}{p}\sum_{j\in\mathbb{F}_p}\sum_{l\in\mathbb{F}_p}\left(\zeta^{c(jl)}\prod_{i\in\mathbb{F}_p^\times} y_i^{c(il)}\right)\zeta^{-c(jk)}\\
&=\frac{1}{p}\sum_{l\in\mathbb{F}_p}\left(\prod_{i\in\mathbb{F}_p^\times} y_i^{c(il)}\sum_{j\in\mathbb{F}_p}\zeta^{c(jl)-c(jk)}\right).
\end{align*}
But $c(jl)-c(jk)\equiv c(j(l-k))$ (mod $p$) and
\[
\sum_{j\in\mathbb{F}_p}\zeta^{c(j(l-k))}
=\begin{cases}
p & \mbox{if }l-k=0\\
0 & \mbox{otherwise}.
\end{cases}
\]
It follows that
\[
\textbf{r}_G(a)(\chi)=\prod_{i\in\mathbb{F}_p^\times}y_i^{c(ik)}
\]
also. We conclude from the identification (\ref{eq:6.8}) that $r_G(a)=\Theta^t_*(g)$.

Finally, since $r_G(a)=\Theta^t_*(g)$, it follows from Proposition~\ref{prop:7.5} that
\[
\textbf{r}_G(a)\textbf{r}_G(a)^{[-1]}=1.
\]
Since $a\in A_h$, we deduce from Corollary~\ref{cor:2.9} that $A_h=\mathcal{O}_FG\cdot a$. This proves that $a$ satisfies all of the properties claimed in Proposition~\ref{prop:12.2}.
\end{proof}


\section{Decomposition of Local Wild Resolvends I}\label{s:13}

Let $F$ be a finite extension of $\mathbb{Q}_p$. Theorem~\ref{thm:13.1} below is analogous to Theorem~\ref{thm:8.5}, except now we consider a wildly and weakly ramified homomorphism instead of a tame one.

\begin{thm}\label{thm:13.1}Let $h\in\mbox{Hom}(\Omega_F,G)$ be weakly ramified such that $F^h/F$ has ramification index $p$. Assume in addition that $F/\mathbb{Q}_p$ is unramified and that $p$ is odd. If $A_h=\mathcal{O}_FG\cdot a$, then
\[
r_G(a)=u\Theta^t_*(g)
\]
for some $u\in\mathcal{H}(\mathcal{O}_FG)$ and $g\in\Lambda(FG)^\times$.
\end{thm}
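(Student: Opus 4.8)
The plan is to follow the proof of Theorem~\ref{thm:8.5} almost verbatim, with two substitutions: the factorization $h=h^{nr}h^{tot}$ with respect to $\sigma$ is replaced by a factorization with respect to the uniformizer $p$ of $F$ (legitimate since $F/\mathbb{Q}_p$ is unramified, so $p=\pi_F$ and $F_{p,n}$ is defined as in Section~\ref{s:11}), and Proposition~\ref{prop:8.4} is replaced by Proposition~\ref{prop:12.2}.

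First I would apply Proposition~\ref{prop:11.3} to obtain a factorization $h=h^{nr}h^{tot}$ with respect to $p$; since $h$ is weakly ramified, this factorization has level at most $2$, so $F^{h^{tot}}\subset F_{p,2}$. Because $h^{nr}$ is unramified, Lemma~\ref{lem:4.3} shows that $F^{h^{tot}}/F$ has the same ramification as $F^h/F$, hence ramification index $p$. By Proposition~\ref{prop:12.1} the extension $F^{h^{tot}}/F$ is then weakly ramified, so $h^{tot}$ is a weakly ramified homomorphism meeting the hypotheses of Proposition~\ref{prop:12.2}.

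Next I would build the generator of $A_h$ from its unramified and totally ramified parts. By Corollary~\ref{cor:2.10} and~(\ref{eq:6.1''}) there is $a_{nr}\in F_{h^{nr}}$ with $\mathcal{O}_{h^{nr}}=\mathcal{O}_FG\cdot a_{nr}$ and $r_G(a_{nr})=u'$ for some $u'\in\mathcal{H}(\mathcal{O}_FG)$; by Proposition~\ref{prop:12.2} there is $a_{tot}\in F_{h^{tot}}$ with $A_{h^{tot}}=\mathcal{O}_FG\cdot a_{tot}$ and $r_G(a_{tot})=\Theta^t_*(g)$ for some $g\in\Lambda(FG)^\times$. Since $h^{nr}$ is unramified and $h^{tot}$ weakly ramified, Proposition~\ref{prop:4.4} yields $a'\in F_h$ with $A_h=\mathcal{O}_FG\cdot a'$ and $\textbf{r}_G(a')=\textbf{r}_G(a_{nr})\textbf{r}_G(a_{tot})$, so $r_G(a')=u'\Theta^t_*(g)$. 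Finally, since $A_h=\mathcal{O}_FG\cdot a=\mathcal{O}_FG\cdot a'$, we may write $a=\gamma\cdot a'$ for some $\gamma\in(\mathcal{O}_FG)^\times$, and then
\[
r_G(a)=rag(\gamma)r_G(a')=\big(rag(\gamma)u'\big)\Theta^t_*(g),
\]
so $u:=rag(\gamma)u'\in\mathcal{H}(\mathcal{O}_FG)$ does the job.

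The substantive analytic work — producing a self-dual generator of $A_{h^{tot}}$ whose reduced resolvend is a value of $\Theta^t_*$ — is already carried out in Proposition~\ref{prop:12.2}, so I do not anticipate a serious obstacle here. The only points demanding attention are the bookkeeping items: verifying that $h^{tot}$ satisfies all hypotheses of Proposition~\ref{prop:12.2} (ramification index $p$ via Lemma~\ref{lem:4.3}, containment in $F_{p,2}$ via the level bound of Proposition~\ref{prop:11.3}, weak ramification via Proposition~\ref{prop:12.1}), and checking that the weak-ramification hypothesis of Proposition~\ref{prop:4.4} is in force.
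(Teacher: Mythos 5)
Your proposal matches the paper's own proof essentially step for step: factor $h$ with respect to the uniformizer $p$ via Proposition~\ref{prop:11.3}, transfer the ramification index to $h^{tot}$ via Lemma~\ref{lem:4.3}, invoke Proposition~\ref{prop:12.2} for the totally ramified part and Corollary~\ref{cor:2.10} with (\ref{eq:6.1''}) for the unramified part, then glue with Proposition~\ref{prop:4.4} and adjust by a unit $\gamma\in(\mathcal{O}_FG)^\times$. Your extra bookkeeping (checking weak ramification of $h^{tot}$ so that Proposition~\ref{prop:4.4} applies) is a point the paper leaves implicit, and it is correct.
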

\begin{proof}Since $F/\mathbb{Q}_p$ is unramified, we may choose $p$ to be a uniformizer of $F$. From Proposition~\ref{prop:11.3}, there exists a factorization $h=h^{nr}h^{tot}$ of $h$ such that $F^{h^{tot}}\subset F_{p,2}$. Moreover, by Lemma~\ref{lem:4.3}, the ramification index of $F^{h^{tot}}/F$ is equal to that of $F^h/F$, which is $p$ by hypothesis. 

On one hand, since $p$ is odd, the remarks above and Proposition~\ref{prop:12.2} imply that there is an element $a_{tot}\in F_{h^{tot}}$ such that $A_{h^{tot}}=\mathcal{O}_FG\cdot a_{tot}$ and
\[
r_G(a_{tot})=\Theta^t_*(g)\hspace{1cm}\mbox{for some }g\in\Lambda(FG)^\times.
\]
On the other hand, by Corollary~\ref{cor:2.10} and (\ref{eq:6.3}), there exists  $a_{nr}\in F_{h^{nr}}$ such that $\mathcal{O}_{h^{nr}}=\mathcal{O}_FG\cdot a_{nr}$ and
\[
r_G(a_{nr})=u'\hspace{1cm}\mbox{for some }u'\in\mathcal{H}(\mathcal{O}_FG)
\]
Proposition~\ref{prop:4.4} then yields an element $a'\in F_h$ such that $A_h=\mathcal{O}_FG\cdot a'$ and
\[
\textbf{r}_{G}(a')=\textbf{r}_{G}(a_{nr})\textbf{r}_{G}(a_{tot}).
\]
Since $A_h=\mathcal{O}_FG\cdot a$ also, we have $a=\gamma\cdot a'$ for some $\gamma\in(\mathcal{O}_FG)^{\times}$. So
\[
r_{G}(a)=rag(\gamma)r_{G}(a')=(rag(\gamma) u')\Theta_{*}^{t}(g),
\]
where $u:=rag(\gamma) u'\in\mathcal{H}(\mathcal{O}_FG)$ and this proves the claim.
\end{proof}


\section{Proof of Theorem~\ref{thm:1.4}}\label{s:14}

\begin{proof}[Proof of Theorems~\ref{thm:1.4}]Let $h\in\mbox{Hom}(\Omega_K,G)$ be weakly ramified and $V$ the set of primes in $\mathcal{O}_K$ for which are $h_v$ is wildly ramified. Assume in addition that conditions (i) and (ii) hold for all $v\in V$. 

Let $K_h=KG\cdot b$ and $A_{h_v}=\mathcal{O}_{K_v}G\cdot a_v$ for $v\in M_K$ as in Remark~\ref{rem:3.5}. Then, we have $j(c)=\mbox{cl}(A_h)$, where $c\in J(KG)$ satisfies $a_v=c_v\cdot b$ for each $v\in M_K$. In particular, we jave
\[
rag(c_v)=r_G(b)^{-1}r_G(a_v)\hspace{1cm}\mbox{for }v\in M_K.
\]
Then, by (\ref{eq:10.1}) and (\ref{eq:6.1'}), it suffices to show that for all $v\in M_K$ we have
\begin{equation}\label{eq:14.1}
r_G(a_v)\in\mathcal{H}(\mathcal{O}_{K_v}G)\Theta^t_*(\Lambda(K_vG)^\times).
\end{equation}
For $v\notin V$, the above holds by Theorem~\ref{thm:8.5}. For $v\in V$, let $p$ be the rational prime below it. Notice that $p$ must be odd since $G$ has odd order. Moreover, condition (ii) implies that the ramification index of $(K_v)^{h_v}/K_v$ is equal to $p$. Since $K_v/\mathbb{Q}_p$ is unramified by condition (i), we see from Theorem~\ref{thm:13.1} that (\ref{eq:14.1}) holds. Hence, indeed $\mbox{cl}(A_h)\in\mathcal{A}^t(\mathcal{O}_KG)$.
\end{proof}


\section{Valuation of Local Wild Resolvents}\label{s:15}

Let $F$ be a finite extension of $\mathbb{Q}_p$. In this section, we write $\zeta=\zeta_{p,F}$ for the chosen primitive $p$-th root of unity in $F^c$. Moreover, we assume that $\zeta\notin F$.

\begin{definition}\label{def:15.1}Let $a\in\mbox{Map}(G,F^c)$ and $\chi\in\hat{G}_F$. Define
\[
\left(a\mid\chi\right):=\textbf{r}_G(a)(\chi),
\]
called the \emph{resolvent of $a$ at $\chi$}.
\end{definition}

In the remaining of this section, we prove Proposition~\ref{prop:15.2} below.

\begin{prop}\label{prop:15.2}Let $h\in\mbox{Hom}(\Omega_F,G)$ be wildly and weakly ramified. Under the assumption that $\zeta\notin F$, if $A_h=\mathcal{O}_FG\cdot a$, then
\[
\left(a\mid\chi\right)\in\mathcal{O}_{F^c}^\times
\hspace{1cm}\mbox{for all }\chi\in\hat{G}_F.
\]
\end{prop}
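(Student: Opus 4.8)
I would split the statement into an integrality bound, which is the real content, and a multiplicative identity, which comes for free. Since $A_h=\mathcal O_FG\cdot a$, Corollary~\ref{cor:2.9} gives $\mathbf r_G(a)\mathbf r_G(a)^{[-1]}\in(\mathcal O_FG)^\times\subseteq(\mathcal O_{F^c}G)^\times$, and via the identifications of Section~\ref{s:6} the group $(\mathcal O_{F^c}G)^\times$ consists of maps $\hat G_F\to\mathcal O_{F^c}^\times$. Evaluating at $\chi$ and using $\mathbf r_G(a)^{[-1]}(\chi)=\mathbf r_G(a)(\chi^{-1})$ (immediate from (\ref{eq:6.4}), as in the proof of Proposition~\ref{prop:7.5}) yields
\[
\left(a\mid\chi\right)\left(a\mid\chi^{-1}\right)\in\mathcal O_{F^c}^\times\qquad\text{for all }\chi\in\hat G_F .
\]
Since all resolvents are nonzero ($\mathbf r_G(a)\in(F^cG)^\times$ by Proposition~\ref{prop:2.6}), it now suffices to prove that $\left(a\mid\chi\right)\in\mathcal O_{F^c}$ for every $\chi$: applying this to both $\chi$ and $\chi^{-1}$ then forces each to be a unit.

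Next I would rewrite the resolvent over $\Gamma:=\mathrm{Gal}(F^h/F)$. Writing $L:=F^h$, every value $a(s)$ lies in $A^h=A_{L/F}$ by Definition~\ref{def:2.2}, and the values of $a$ on a set $\{s_i\}$ of representatives for $h(\Omega_F)\backslash G$ generate $A_{L/F}$ over $\mathcal O_F\Gamma$ because $A_h=\mathcal O_FG\cdot a$. Grouping $\mathbf r_G(a)(\chi)=\sum_s a(s)\chi(s)^{-1}$ by these cosets exhibits $\left(a\mid\chi\right)$ as an $\mathcal O_{F^c}^\times$-linear combination (the coefficients $\chi(s_i)^{-1}$ are roots of unity) of Lagrange resolvents $\sum_{\sigma\in\Gamma}\psi(\sigma)\sigma(\beta)$, with $\beta\in A_{L/F}$ and $\psi$ a character of $\Gamma$. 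Thus it is enough to show $\sum_{\sigma\in\Gamma}\psi(\sigma)\sigma(\beta)\in\mathcal O_{F^c}$ for every such $\beta,\psi$. One may further assume $\psi$ faithful, by replacing $\Gamma$ with $\mathrm{Gal}(L^{\ker\psi}/F)$ and $\beta$ with $\mathrm{Tr}_{L/L^{\ker\psi}}(\beta)$ (which, by transitivity of the different and Proposition~\ref{prop:1.1}, lands in an explicitly computable fractional ideal of $L^{\ker\psi}$); note $L^{\ker\psi}/F$ is still weakly ramified by Lemma~\ref{lem:4.2}(b),(c), with cyclic inertia that is a $p$-group whenever it is nontrivial and wild.

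The remaining and main step is to bound $v_{F^c}\bigl(\sum_{\sigma\in\Gamma}\psi(\sigma)\sigma(\beta)\bigr)$ below by $0$, now with $\Gamma$ cyclic, $L/F$ wildly and weakly ramified, and $\beta$ in the relevant piece of $A_{L/F}$. The approach is to expand each conjugate $\sigma(\beta)$ against a uniformizer $\pi_L$ of $L$ along the ramification filtration: weak ramification forces $v_L(\sigma(\pi_L)-\pi_L)=2$ for every wild $\sigma\neq1$ (Proposition~\ref{prop:1.1} and Lemma~\ref{lem:4.2}(c)), so the $\sigma(\beta)$ are congruent to $\beta$ modulo controlled powers of $\mathfrak P_L$, while the same inputs pin the pole of $\beta$ to $-v_L(\mathfrak D_{L/F})/2=1-e$. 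The cancellation in the resulting character sums $\sum_\sigma\psi(\sigma)\sigma(\pi_L)^j$ has to absorb that pole, and this is exactly where the hypothesis $\zeta=\zeta_{p,F}\notin F$ is needed: $F(\zeta)/F$ is tame, so over it $L(\zeta)/F(\zeta)$ admits a Kummer description in the spirit of Lemma~\ref{lem:12.3}, and the cyclotomic sums that occur (of the shape $\sum_j j\zeta^{j}=p/(\zeta-1)$, and its higher analogues) contribute valuation $v(p)$ against only $v(\zeta-1)=v(p)/(p-1)$ from the denominators, which for odd $p$ is just enough. Since the equivariance $\omega\cdot\mathbf r_G(a)=\mathbf r_G(a)h(\omega)$ makes $v_{F^c}\bigl(\left(a\mid\chi\right)\bigr)$ depend only on the $\Omega_F$-orbit of $\chi$, only finitely many character cases must actually be checked. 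I expect this last estimate to be the hard part. An alternative, when $F/\mathbb Q_p$ is additionally unramified, is to build an explicit generator $a'$ of $A_h$ by a Lubin--Tate construction as in Proposition~\ref{prop:12.2}, for which $\left(a'\mid\chi\right)$ is visibly a product of units, and then write $a=\gamma\cdot a'$ with $\gamma\in(\mathcal O_FG)^\times$, giving $\left(a\mid\chi\right)=\gamma(\chi)\left(a'\mid\chi\right)\in\mathcal O_{F^c}^\times$.
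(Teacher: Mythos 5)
Your opening reduction is the same as the paper's: from Corollary~\ref{cor:2.9} one gets $\left(a\mid\chi\right)\left(a\mid\chi^{-1}\right)\in\mathcal{O}_{F^c}^\times$ for every $\chi$. But from that point on your argument has a genuine gap. You propose to finish by proving the integrality $\left(a\mid\chi\right)\in\mathcal{O}_{F^c}$ directly, and you yourself flag the required valuation estimate as ``the hard part'' and leave it as a heuristic sketch (congruences along the ramification filtration, cyclotomic sums of the shape $\sum_j j\zeta^j$, ``just enough'' cancellation for odd $p$). That estimate is the entire content of the proposition --- the values $a(s)$ lie in $A^h$, which has strictly negative valuation, so the integrality of the resolvent is exactly the nontrivial cancellation to be proved --- and it is not carried out. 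The paper avoids it altogether with a symmetry argument you did not find: after factoring $h=h^{nr}h^{tot}$ with $F^{h^{tot}}\subset F_{p,2}$ and reducing (via Proposition~\ref{prop:4.4} and a unit $\gamma\in(\mathcal{O}_FG)^\times$) to the totally ramified piece $a_{tot}$, whose group $h^{tot}(\Omega_F)$ has exponent $p$ by Lemma~\ref{lem:15.3}, the hypothesis $\zeta\notin F$ gives $\mathrm{Gal}(F^{h^{tot}}(\zeta)/F)\simeq\mathrm{Gal}(F(\zeta)/F)\times\mathrm{Gal}(F^{h^{tot}}/F)$, and the element $\omega$ acting as $\zeta\mapsto\zeta^{-1}$ while fixing $F^{h^{tot}}$ satisfies $\left(a_{tot}\mid\chi^{-1}\right)=\omega\left(a_{tot}\mid\chi\right)$. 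Hence the two factors have equal valuation, and since their product is a unit each is a unit --- no integrality estimate is needed.

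Your fallback (``build an explicit generator $a'$ of $A_h$ by a Lubin--Tate construction as in Proposition~\ref{prop:12.2} and write $a=\gamma\cdot a'$ with $\gamma\in(\mathcal{O}_FG)^\times$'') does not rescue the argument: Proposition~\ref{prop:12.2} only produces a generator when the ramification index is exactly $p$, and Remark~\ref{rem:16.2} points out that for index $p^r$ with $r>1$ the product of the $a_i$ is a generator of $F_h$ over $FG$ but \emph{not} of $A_h$ over $\mathcal{O}_FG$, so one only gets $\gamma\in(FG)^\times$. Proposition~\ref{prop:15.2} is invoked in Theorem~\ref{thm:16.1} precisely to show that this $\gamma$ lies in $M(FG)^\times$; assuming $\gamma\in(\mathcal{O}_FG)^\times$ there is circular, and the case $r>1$ is exactly where the proposition is needed.
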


\begin{lem}\label{lem:15.3}Let $L/F$ be a finite Galois extension and denote by $G(L/F)_n$ the $n$-th ramification group of $L/F$. 
\begin{enumerate}[(a)]
\item The quotients $G(L/F)_n/G(L/F)_{n+1}$ are elementary $p$-abelian for $n\geq 1$.
\item If $L/F$ is a wildly and weakly ramified abelian extension, then its ramification index is equal to $p^r$ for some $r\in\mathbb{Z}^+$ and $G(L/F)_0\simeq(\mathbb{Z}/p\mathbb{Z})^r$.
\end{enumerate}
\end{lem}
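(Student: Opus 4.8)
The plan is to extract both statements from the standard theory of higher ramification groups (essentially Serre, Chapter IV), adapted to the weakly ramified setting. For part (a), I would recall that for each $n\geq 1$ there is an injective homomorphism $G(L/F)_n/G(L/F)_{n+1}\hookrightarrow U^{(n)}_L/U^{(n+1)}_L$, where $U^{(n)}_L=1+\mathfrak{P}_L^n$ (and $U^{(0)}_L=\mathcal{O}_L^\times$). For $n\geq 1$ the quotient $U^{(n)}_L/U^{(n+1)}_L$ is isomorphic to the additive group of the residue field of $L$, which has characteristic $p$; hence it is elementary abelian $p$-group, and so is every subgroup of it. This gives (a) immediately. (Here $L/F$ is an arbitrary finite Galois extension, so no weak ramification hypothesis is needed for (a).)

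For part (b), assume $L/F$ is abelian, wildly and weakly ramified. Weak ramification means $G(L/F)_2=1$, so the ramification filtration is $G(L/F)_0\supseteq G(L/F)_1\supseteq 1$. Since $L/F$ is wildly ramified, $G(L/F)_1\neq 1$. By Lemma~\ref{lem:4.2}(c) applied with $e_0=|G(L/F)_0/G(L/F)_1|$: if $p\nmid e_0$ then $G(L/F)_0=G(L/F)_1$ (as $e_0$ would then be the index of a wild inertia subgroup in a group whose order is a $p$-power times $e_0$, forcing $e_0=1$); more directly, the tame quotient $G(L/F)_0/G(L/F)_1$ has order prime to $p$, while by part (a) $G(L/F)_1$ is a $p$-group, so $G(L/F)_0$ has order $e_0\cdot p^r$ with $p\nmid e_0$. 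Now I would invoke the weak ramification hypothesis together with Lemma~\ref{lem:4.2}(c): since $L/F$ is wildly \emph{and} weakly ramified, that lemma gives $G(L/F)_0=G(L/F)_1$, hence $e_0=1$ and $G(L/F)_0=G(L/F)_1$ is a $p$-group, say of order $p^r$ with $r\in\mathbb{Z}^+$ (positive since $L/F$ is wildly ramified). Finally, $G(L/F)_0$ is abelian (as $L/F$ is abelian) and, being equal to $G(L/F)_1$ with $G(L/F)_2=1$, the quotient $G(L/F)_1/G(L/F)_2=G(L/F)_0$ embeds into the elementary $p$-abelian group $U^{(1)}_L/U^{(2)}_L$; therefore $G(L/F)_0$ is itself elementary $p$-abelian, i.e.\ $G(L/F)_0\simeq(\mathbb{Z}/p\mathbb{Z})^r$.

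The main obstacle — really the only subtlety — is making sure the embedding $G(L/F)_n/G(L/F)_{n+1}\hookrightarrow U^{(n)}_L/U^{(n+1)}_L$ is applied at the correct index: it is the $n=1$ piece that is controlled by the residue field (characteristic $p$), which is exactly why the weak ramification condition $G(L/F)_2=1$ is what collapses $G(L/F)_0$ down to an elementary abelian $p$-group rather than merely a $p$-group. I would cite \cite[Chapter IV, \S 2]{S} for the filtration-of-units embedding and for the fact that $G_n/G_{n+1}$ injects into the residue field for $n\geq 1$, and Lemma~\ref{lem:4.2}(c) (already available in the excerpt) for the equality $G(L/F)_0=G(L/F)_1$ in the wildly-and-weakly-ramified case. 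Everything else is bookkeeping: the degree $p^r$ with $r\geq 1$ then follows since wild ramification forces $r\geq 1$, and abelian-ness plus exponent $p$ gives the stated isomorphism type.
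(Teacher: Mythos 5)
Your proposal is correct and follows essentially the same route as the paper: part (a) is the standard embedding $G_n/G_{n+1}\hookrightarrow U_L^{(n)}/U_L^{(n+1)}$ from Serre (which the paper simply cites), and part (b) combines $G_2=1$, part (a) applied at $n=1$, and Lemma~\ref{lem:4.2}(c) to get $G_0=G_1\simeq(\mathbb{Z}/p\mathbb{Z})^r$. The brief detour about $e_0$ in your part (b) is redundant but harmless, since you ultimately invoke Lemma~\ref{lem:4.2}(c) exactly as the paper does.
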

\begin{proof}For (a), see \cite[Chapter IV Proposition 7 Corollary 3]{S}, for example. For (b), observe that if $L/F$ is weakly ramified, then (a) implies that
\[
G(L/F)_1\simeq G(L/F)_1/G(L/F)_2\simeq (\mathbb{Z}/p\mathbb{Z})^r\hspace{1cm}\mbox{for some }r\in\mathbb{Z}^+.
\]
If $L/F$ is abelian and wildly ramified in addition, then Lemma~\ref{lem:4.2} (c) implies that $G(L/F)_0=G(L/F)_1$, from which the claims follow.
\end{proof}

\begin{proof}[Proof of Proposition~\ref{prop:15.2}]
Since $F/\mathbb{Q}_p$ is unramified, we may choose $p$ to be a uniformizer of $F$. By Proposition~\ref{prop:11.3}, there exists a factorization $h=h^{nr}h^{tot}$ of $h$ such that $F^{h^{tot}}\subset F_{p,2}$. Notice that $h^{tot}=(h^{nr})^{-1}h$. Since $h$ is wildly and weakly ramified, it follows from Lemma~\ref{lem:4.3} that the same is true for $h^{tot}$.

Consider $F^{h^{tot}}/F$, which is totally, wildly, and weakly ramified.  Lemma~\ref{lem:15.3} (b) then implies that $h^{tot}(\Omega_F)$, which is isomorphic to $\mbox{Gal}(F^{h^{tot}}/F)$, has exponent $p$. Let $\alpha\in F^h$ be such that $A_{F^h/F}=\mathcal{O}_F\mbox{Gal}(F^h/F)\cdot\alpha$. It is not hard to see that $a_{tot}\in\mbox{Map}(G,F^c)$ given by
 \[
a_{tot}(s):=
\begin{cases}
\omega(\alpha) &\mbox{if $s=h^{tot}(\omega)$ for $\omega\in\Omega_F$}\\
0 & \mbox{otherwise}
\end{cases}
\]
is an element in $F_{h^{tot}}$ and that $A_{h^{tot}}=\mathcal{O}_FG\cdot a_{tot}$.

Now, by Corollary~\ref{cor:2.10}, there is $a_{nr}\in F_{h^{nr}}$ such that $\mathcal{O}_{h^{nr}}=\mathcal{O}_{F}G\cdot a_{nr}$ and
\[
\textbf{r}_G(a_{nr})=u\hspace{1cm}\mbox{for some }u\in(\mathcal{O}_{F^c}G)^\times.
\]
From Proposition~\ref{prop:4.4}, we obtain $a'\in F_h$ such that $A_h=\mathcal{O}_FG\cdot a'$ and
\[
\textbf{r}_{G}(a')=\textbf{r}_{G}(a_{nr})\textbf{r}_{G}(a_{tot}).
\]
Since $A_h=\mathcal{O}_FG\cdot a$ also, we have $a=\gamma\cdot a'$ for some $\gamma\in(\mathcal{O}_FG)^\times$. So
\[
(a\mid\chi)=\gamma(\chi)(a_{nr}\mid\chi)(a_{tot}\mid\chi)\hspace{1cm}\mbox{for all }\chi\in\hat{G}_F.
\]
Given any $\chi\in\hat{G}_F$, it is clear that $\gamma(\chi),(a_{nr}\mid\chi)\in\mathcal{O}_{F^c}^\times$, Hence, it remains to show that $(a_{tot}\mid\chi)\in\mathcal{O}_{F^c}^\times$ for all $\chi\in\hat{G}_F$ also.

To that end, recall that $A_{h^{tot}}=\mathcal{O}_FG\cdot a_{tot}$. Hence, we have
\[
\textbf{r}_G(a_{tot})\textbf{r}_G(a_{tot})^{[-1]}\in(\mathcal{O}_FG)^\times
\]
from Corollary~\ref{cor:2.9}. In particular, this implies that 
\begin{equation}\label{eq:15.1}
(a_{tot}\mid\chi)(a_{tot}\mid\chi^{-1})\in\mathcal{O}_{F^c}^\times\hspace{1cm}\mbox{for all }\chi\in\hat{G}.
\end{equation}
Moreover, we have $[F(\zeta):F]=p-1$ because $\zeta\notin F$. Since $[F^{h^{tot}}:F]$ equals a power of $p$, we have a natural isomorphism
\[
\mbox{Gal}(F^{h^{tot}}(\zeta)/F)\simeq\mbox{Gal}(F(\zeta)/F)\times\mbox{Gal}(F^{h^{tot}}/F).
\]
Let $\omega\in\Omega_F$ be a lift of the element
\[
(\zeta\mapsto\zeta^{-1})\times\mbox{id}_{F^{h^{tot}}}.
\]
Notice that $a_{tot}(s)\in F^{h^{tot}}$ and so is fixed by $\omega$ for all $s\in G$. Moreover, since $a_{tot}(s)=0$ for all $s\notin h^{tot}(\Omega_F)$ and elements in $h^{tot}(\Omega_F)$ have order $p$, we have
\[
(a_{tot}\mid\chi^{-1})=\omega(a_{tot}\mid\chi)
\hspace{1cm}\mbox{for all }\chi\in\hat{G}_F.
\]
Hence, given any $\chi\in\hat{G}$ we have
\[
v_{F^{h^{tot}}(\zeta)}((a_{tot}\mid\chi^{-1}))=v_{F^{h^{tot}}(\zeta)}((a_{tot}\mid\chi)).
\]
This together with (\ref{eq:15.1}) shows that $(a_{tot}\mid\chi)\in\mathcal{O}_{F^c}^\times$, as desired.
\end{proof}


\section{Decomposition of Local Wild Resolvends II}\label{s:16}

Let $F$ be a finite extension of $\mathbb{Q}_p$ and $M(FG)$ the maximal $\mathcal{O}_F$-order contained in $FG$. Theorem~\ref{thm:16.1} below is analogous to Theorem~\ref{thm:13.1}, except now there is no restriction on the ramification index.

\begin{thm}\label{thm:16.1}Let $h\in\mbox{Hom}(\Omega_F,G)$ be wildly and weakly ramified. Assume in addition that $F/\mathbb{Q}_p$ is unramified and that $p$ is odd. If $A_h=\mathcal{O}_FG\cdot a$, then
\[
r_G(a)= rag(\gamma)u\Theta^t_*(g)
\]
for some $\gamma\in M(FG)^\times$, $u\in\mathcal{H}(\mathcal{O}_FG)$, and $g\in\Lambda(FG)^\times$.
\end{thm}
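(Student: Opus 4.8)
The plan is to reduce Theorem~\ref{thm:16.1} to the ramification-index-$p$ case of Proposition~\ref{prop:12.2} by splitting $h$ into rank-one wild pieces, and then to absorb the resulting discrepancy into a unit of $M(FG)$ by invoking Proposition~\ref{prop:15.2}. Since $F/\mathbb{Q}_p$ is unramified I take $p$ as a uniformizer of $F$ and, by Proposition~\ref{prop:11.3}, choose a factorization $h=h^{nr}h^{tot}$ with respect to $p$ with $F^{h^{tot}}\subseteq F_{p,2}$; note $F^{h^{tot}}/F$ is totally ramified. As $h^{nr}$ is unramified, Lemma~\ref{lem:4.3} shows $F^{h^{tot}}/F$ has the same ramification as $F^h/F$, hence is wildly and weakly ramified, and being abelian, Lemma~\ref{lem:15.3}(b) gives $\mathrm{Gal}(F^{h^{tot}}/F)\simeq(\mathbb{Z}/p\mathbb{Z})^r$ for some $r\in\mathbb{Z}^+$. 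I would then decompose $h^{tot}(\Omega_F)\simeq(\mathbb{Z}/p\mathbb{Z})^r$ as an internal direct product of $r$ subgroups of order $p$ and compose $h^{tot}$ with the corresponding projections to obtain $h_1,\dots,h_r\in\mathrm{Hom}(\Omega_F,G)$ with $h^{tot}=h_1\cdots h_r$ and $|h_j(\Omega_F)|=p$ for each $j$; each $F^{h_j}$ is then a degree-$p$ subfield of the totally ramified $F^{h^{tot}}\subseteq F_{p,2}$, so $F^{h_j}/F$ is totally and wildly ramified of ramification index $p$ and is weakly ramified by Lemma~\ref{lem:4.2}(b).

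Next I would produce a candidate resolvend. For each $j$, Proposition~\ref{prop:12.2} (legitimate since $p$ is odd) gives $a_j\in F_{h_j}$ with $A_{h_j}=\mathcal{O}_FG\cdot a_j$ and $r_G(a_j)=\Theta^t_*(g_j)$ for some $g_j\in\Lambda(FG)^\times$. Put $g:=g_1\cdots g_r\in\Lambda(FG)^\times$ and let $a'\in\mathrm{Map}(G,F^c)$ satisfy $\textbf{r}_G(a')=\textbf{r}_G(a_1)\cdots\textbf{r}_G(a_r)$; since $F^cG$ is commutative, (\ref{eq:2.1}) gives $a'\in F_{h^{tot}}$, hence $F_{h^{tot}}=FG\cdot a'$ by Proposition~\ref{prop:2.6}, and $r_G(a')=\Theta^t_*(g)$ because $\Theta^t_*$ is a homomorphism. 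For the unramified part, Corollary~\ref{cor:2.10} and (\ref{eq:6.1''}) give $a_{nr}\in F_{h^{nr}}$ with $\mathcal{O}_{h^{nr}}=\mathcal{O}_FG\cdot a_{nr}$ and $r_G(a_{nr})=u'$ for some $u'\in\mathcal{H}(\mathcal{O}_FG)$.

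The main obstacle is that Proposition~\ref{prop:4.4} does not apply to recombine the wildly ramified factors $h_1,\dots,h_r$, so $a'$ need not lie in $A_{h^{tot}}$, and one must control the error. To do so I would fix a genuine generator $a_{tot}\in F_{h^{tot}}$ with $A_{h^{tot}}=\mathcal{O}_FG\cdot a_{tot}$ and write $a'=\gamma\cdot a_{tot}$ with $\gamma\in(FG)^\times$, so that $\gamma(\chi)=(a'\mid\chi)(a_{tot}\mid\chi)^{-1}$ for all $\chi\in\hat{G}_F$. Here Proposition~\ref{prop:15.2} is the key input: since $F/\mathbb{Q}_p$ is unramified and $p$ is odd we have $\zeta_{p,F}\notin F$, so it applies to each $h_j$ and to $h^{tot}$, yielding $(a_j\mid\chi)\in\mathcal{O}_{F^c}^\times$ — hence $(a'\mid\chi)=\prod_j(a_j\mid\chi)\in\mathcal{O}_{F^c}^\times$ — and $(a_{tot}\mid\chi)\in\mathcal{O}_{F^c}^\times$ for every $\chi$. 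Thus $\gamma(\chi)\in\mathcal{O}_{F^c}^\times$ for all $\chi\in\hat{G}_F$, which under the identification (\ref{eq:6.3}) means exactly $\gamma\in M(FG)^\times$; therefore $r_G(a_{tot})=rag(\gamma^{-1})\Theta^t_*(g)$ with $\gamma^{-1}\in M(FG)^\times$.

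Finally I would recombine. Applying Proposition~\ref{prop:4.4} to $h=h^{nr}h^{tot}$ (with $h^{nr}$ unramified and $h^{tot}$ weakly ramified) produces $a''\in F_h$ with $A_h=\mathcal{O}_FG\cdot a''$ and $\textbf{r}_G(a'')=\textbf{r}_G(a_{nr})\textbf{r}_G(a_{tot})$, so $r_G(a'')=u'\,rag(\gamma^{-1})\Theta^t_*(g)$. Since $A_h=\mathcal{O}_FG\cdot a$ as well, $a=\delta\cdot a''$ for some $\delta\in(\mathcal{O}_FG)^\times\subseteq M(FG)^\times$, whence
\[
r_G(a)=rag(\delta\gamma^{-1})\,u'\,\Theta^t_*(g),
\]
which is of the required form with $\gamma$ replaced by $\delta\gamma^{-1}\in M(FG)^\times$, $u:=u'\in\mathcal{H}(\mathcal{O}_FG)$, and $g\in\Lambda(FG)^\times$. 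I expect the step identifying $\gamma\in M(FG)^\times$ via Proposition~\ref{prop:15.2} to be the crux; the rest is assembling pieces already in place.
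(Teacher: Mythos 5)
Your proof is correct and follows essentially the same route as the paper's: the same factorization $h=h^{nr}h_1\cdots h_r$ into an unramified part and degree-$p$ wild pieces via Lubin--Tate division fields, Proposition~\ref{prop:12.2} for each wild piece, and Proposition~\ref{prop:15.2} to force the leftover unit into $M(FG)^\times$ via resolvent valuations. The only (harmless) difference is one of bookkeeping: the paper compares $a$ directly with the element whose resolvend is $\textbf{r}_G(a_{nr})\textbf{r}_G(a_1)\cdots\textbf{r}_G(a_r)$ as generators of $F_h$ over $FG$ and applies Proposition~\ref{prop:15.2} to $a$ itself, rather than routing through a generator $a_{tot}$ of $A_{h^{tot}}$ and a final application of Proposition~\ref{prop:4.4} as you do.
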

\begin{proof}
As in the proof of Proposition~\ref{prop:15.2}, there is a factorization $h=h^{nr}h^{tot}$ of $h$ such that $F^{h^{tot}}\subset F_{p,2}$, and we know that $h^{tot}(\Omega_F)$ has exponent $p$. So
\[
h^{tot}(\Omega_F)\simeq H_1\times H_2\times\cdots H_r
\]
for subgroups $H_1,H_2,\dots,H_r$ each of order $p$. For each $i=1,2,\dots,r$, define
\[
h_i\in\mbox{Hom}(\Omega_F,G);\hspace{1em}
h_i(\omega):=\uppi_i(h^{tot}(\omega)),
\]
where $\uppi_i:h^{tot}(\Omega_F)\longrightarrow H_i$ denotes the natural projection map. Clearly
\[
h^{tot}=h_1h_2\cdots h_r.
\]

Moreover, observe that for each $i=1,2,\dots,r$, we have
\[
F^{h_i}\subset F^{h^{tot}}\subset F_{p,2}.
\]
The extension $F^{h_i}/F$ has ramification index $p$ since it is totally ramified and
\[
[F^{h_i}:F]=|h_i(\Omega_F)|=p.
\]
Since $F/\mathbb{Q}_p$ is unramified and $p$ is odd, by Proposition~\ref{prop:12.2} there is $a_i\in F_{h_i}$ such that $A_{h_i}=\mathcal{O}_FG\cdot a_i$ and
\[
r_G(a_i)=\Theta^t_*(g_i)\hspace{1cm}\mbox{for some }g_i\in\Lambda(FG)^\times.
\]
On the other hand, by Corollary~\ref{cor:2.10} and (\ref{eq:6.1''}), there exists $a_{nr}\in F_{h^{nr}}$ such that $\mathcal{O}_{h^{nr}}=\mathcal{O}_FG\cdot a_{nr}$ and
\[
r_G(a_{nr})=u\hspace{1cm}\mbox{for some }u\in\mathcal{H}(\mathcal{O}_FG).
\]

Now, let $a'\in\mbox{Map}(G,F^c)$ be such that
\[
\textbf{r}_G(a')=\textbf{r}_G(a_{nr})\textbf{r}_G(a_1)\cdots\textbf{r}_G(a_r).
\]
From (\ref{eq:2.1}) and Proposition~\ref{prop:2.6}, we know that $a'\in F_h$ and that $F_h=FG\cdot a'$. Since $F_h=FG\cdot a$ also, we have $a=\gamma\cdot a'$ for some $\gamma\in (FG)^\times$. So
\[
r_G(a)=rag(\gamma)r_G(a')=rag(\gamma)u\Theta^t_*(g),
\]
where $g:=g_1g_2\cdots g_r\in\Lambda(FG)^\times$. It remains to show that $\gamma\in M(FG)^\times$.

To that end, observe that
\[
M(FG)^\times=\mbox{Map}(\hat{G}_F,\mathcal{O}_F^\times)
\]
via identification (\ref{eq:6.3}). Moreover, given any $\chi\in\hat{G}_F$, we have
\[
(a\mid\chi)=\gamma(\chi)(a_{nr}\mid\chi)(a_1\mid\chi)\cdots (a_r\mid\chi),
\]
where clearly $(a_{nr}\mid\chi)\in\mathcal{O}_{F^c}^\times$. Since $F$ is unramified over $\mathbb{Q}_p$ and so does not contain any primitive $p$-roots of unity, Proposition~\ref{prop:15.2} implies that
\[
(a\mid\chi),(a_1\mid\chi),\dots,(a_r\mid\chi)\in\mathcal{O}_{F^c}^\times
\]
also. This shows that $\gamma(\chi)\in\mathcal{O}_{F}^\times$ and so indeed $\gamma\in M(FG)^\times$, as desired.
\end{proof}

\begin{remark}\label{rem:16.2}The element $a\in F_h$ will not be a generator of $A_h$ over $\mathcal{O}_FG$ in general because $h_1,h_2,\dots,h_r$ are all ramified. For example, Proposition~\ref{prop:4.4} does not apply here. This is why the assumption that $F^h/F$ has ramification index $p$ is needed in Theorem~\ref{thm:13.1}.
\end{remark}


\section{Proof of Theorem~\ref{thm:1.5}}\label{s:17}

\begin{proof}[Proof of Theorems~\ref{thm:1.5}]Let $h\in\mbox{Hom}(\Omega_K,G)$ be weakly ramified and $V$ the set of primes in $\mathcal{O}_K$ for which are $h_v$ is wildly ramified. Assume in addition that $v$ is unramified over $\mathbb{Q}$ for all $v\in V$.

Let $K_h=KG\cdot b$ and $A_{h_v}=\mathcal{O}_{K_v}G\cdot a_v$ for $v\in M_K$ as in Remark~\ref{rem:3.5}. Then, we have $j(c)=\mbox{cl}(A_h)$, where $c\in J(KG)$ satisfies $a_v=c_v\cdot b$ for each $v\in M_K$. Moreover, by Theorem~\ref{thm:3.3} we have
\[
\ker(\Psi)=\frac{\partial(KG)^\times U(M(KG))}{\partial(KG)^\times U(\mathcal{O}_FG)}.
\]
Hence, to show that $\Psi(\mbox{cl}(A_h))\in\Psi(\mathcal{A}^t(\mathcal{O}_KG))$, it suffices to show that there exists $\gamma\in U(M(KG))$ such that $j(c)j(\gamma)\in \mathcal{A}^t(\mathcal{O}_KG)$. Since
\[
rag(c_v)=r_G(b)^{-1}r_G(a_v)\hspace{1cm}\mbox{for }v\in M_K,
\]
by (\ref{eq:10.1}) and (\ref{eq:6.1'}), it is enough to show that for all $v\in M_K$, we have
\begin{equation}\label{eq:17.1}
r_G(a_v)\in rag(M(K_vG)^\times)\mathcal{H}(\mathcal{O}_{K_v}G)\Theta^t_*(\Lambda(K_vG)^\times),
\end{equation}
where $M(K_vG)$ is the maximal $\mathcal{O}_{K_v}$-order in $K_vG$. For $v\notin V$, the above holds by Theorem~\ref{thm:8.5}. For $v\in V$, let $p$ be the rational prime below it, which must be odd since $G$ has odd order. Moreover, since $K_v/\mathbb{Q}_p$ is unramified, Theorem~\ref{thm:16.1} implies that (\ref{eq:17.1}) holds. So, indeed $\Psi(\mbox{cl}(A_h))\in\Psi(\mathcal{A}^t(\mathcal{O}_KG))$.
\end{proof}


\section{Acknowledgements}\label{s:18}

I would like to thank my advisor Adebisi Agboola for bringing this problem to my attention and giving me many helpful ideas. I am very grateful for his patience and all of his suggestions in the process of writing this paper.


\begin{thebibliography}{99}

\bibitem{B}
J. Brinkhuis, \emph{Galois modules and embedding problems}, J. Reine Angew. Math. 346 (1984) 141-165.
\bibitem{By}
N. P. Byott, \emph{Integral Galois module structure of some Lubin-Tate extensions}, J. Number Theory 77 (1999) 252-273.
\bibitem{C-R}
C.W. Curtis and I. Reiner, \emph{Methods of representation theory with applications to finite groups and orders Vol. II}. John Wiley \& Sons Inc., New York, 1981.
\bibitem{E}
B. Erez, \emph{The Galois structure of the square root of the inverse different}, Math. Z. 208 (1991) 239-255.
\bibitem{F-CF}
A. Fr\"{o}hlich, \emph{Local fields}, in \emph{Algebraic number theory} (J.W.S. Cassels and A. Fr\"{o}hlich, eds). Academic Press Inc., London, 1967. 
\bibitem{FT}
A. Fr\"{o}hlich and M. J. Taylor, \emph{Algebraic number theory}. Cambridge University Press, 1991.
\bibitem{M}
L.R. McCulloh, \emph{Galois module structure of abelian extensions}, J. Reine Angew. Math. 375/376 (1987) 259-306.
\bibitem{N}
J. Neukirch, \emph{Algebraic number theory, Grundlehre der mathematischen Wissenschaften 322}. Springer-Verlag, Berline Heidelberg, 1999.
\bibitem{P}
E.J. Pickett, \emph{Explicit construction of self-dual integral normal bases for the square-root of the inverse different}, J. Number Theory 129 (2009) 1773-1785.
\bibitem{S}
J.P. Serre, \emph{Local fields, Graduate text in mathematics 67}. Springer-Verlag, New York, 1979.
\bibitem{V}
S. Vinatier, \emph{Structure galoisienne dans les extensions faiblement ramifi\'{e}es de $\mathbb{Z}$}, J. Number Theory 91 (2001) 126-152.

\end{thebibliography}
\end{document}